\documentclass[a4 paper,12pt,bezier]{article}
\usepackage[top=3cm,bottom=3cm,left=2.5cm,right=2.5cm]{geometry}
\usepackage{amsmath}
\usepackage{amsfonts,amsthm,amssymb}
\usepackage{amsfonts}
\usepackage{graphics,subfigure,caption2}
\usepackage{authblk}%%%for authors
\usepackage{tikz, color}
\usepackage{graphics,epstopdf}
\usepackage{latexsym,bm}
\usepackage{amsfonts,amsthm,amssymb,mathrsfs,bbding}
\usepackage{CJK}
\usepackage{indentfirst}
\usepackage{enumerate}

\usepackage{color}

\newtheorem{lem}{Lemma}[section]
\newtheorem{thm}[lem]{Theorem}

\newtheorem{con}[lem]{Conjecture}

\newtheorem{cl}{Claim}
\theoremstyle{plain}

\usepackage{cite}

\newcounter{countclaim}

\def \proof{\noindent {\bf Proof}. }
\newcommand {\proofend}{{\hfill$\Box$}}

\def \gap {\vspace{5mm}}

\def \clproof{\noindent {\it Proof}. }

\newcommand {\clproofend}
{{\hfill$\natural$}}

\def \setp {{\mathcal P}}
\def \setq {{\mathcal Q}}

\begin{document}
	
	\baselineskip 6.2mm
	
	\title{Every 3-connected $\{K_{1,4},K_{1,4}+e\}$-free split graph of order at least $13$ is Hamilton-connected}

	\author[1,2,3]{\small Tao Tian\thanks{Corresponding author. Email: taotian0118@163.com}}
	
	\author[4]{\small Fengming Dong\thanks{Email: fengming.dong@nie.edu.sg and donggraph@163.com}}

\affil[1]{\footnotesize School of Mathematics and Statistics, Fujian Normal University, Fuzhou 350117, China}
		
\affil[2]{\footnotesize Key Laboratory of Analytical Mathematics and Applications (Ministry of Education), Fujian Key Laboratory of Analytical
Mathematics and Applications (FJKLAMA), Fujian Normal University, Fuzhou 350117, China}
			
\affil[3]{\footnotesize
Center for Applied Mathematics of Fujian Province (FJNU), Fujian Normal University, Fuzhou 350117, China}
		
\affil[4]{\footnotesize National Institute Education, Nanyang Technological University, Singapore}

\date{}

\maketitle

\abstract{
 A graph $G$ is $\{F_{1}, F_{2},\dots,F_{k}\}$-free if $G$ contains no induced subgraph isomorphic to any $F_{i}$ $(1\leq i \leq k)$. A connected graph $G$ is a split graph if its vertex set  can be partitioned into a clique and an independent set. Ryj\'{a}\v{c}ek et al. [J. Comb. Theory, Ser. B 134 (2019) 239--263] conjectured that every $4$-connected $\{K_{1,4},K_{1,4}+e\}$-free graph with minimum degree at least 6 is Hamiltonian and they confirmed the case with connectivity at least 5, where $K_{1,4}+e$ is the graph obtained from $K_{1,4}$  by adding a new edge.
In this paper,  we show that
every 3-connected $\{K_{1,4},K_{1,4}+e\}$-free split graph of order at least $13$ is Hamilton-connected.
It implies that Ryj\'{a}\v{c}ek et al.'s conjecture holds
for split graphs of order at least $13$.
}

\noindent {\bf Keywords}: $\{K_{1,4},K_{1,4}+e\}$-free graphs, Hamilton-connected,  Split graphs, $I$-cover

%\large \tableofcontents

\section{Introduction }

In this paper, we consider only connected simple graphs  and refer the reader to \cite{24} for undefined notation and terminology.
For any graph $G$,
let $V(G)$ and $E(G)$ denote
its vertex set and edge set,
respectively.
A graph $G$ is $\{F_{1}, F_{2},\dots,F_{k}\}$-free if $G$ contains no induced subgraph isomorphic to any $F_{i}$ $(i\in[k])$,
where $[k]=\{1,2,\ldots,k\}$.
We call a graph $G$ {\sl Hamiltonian} if it contains a spanning cycle.
A path in a graph $G$
with endpoints $u$ and $v$ is called a {\sl$(u,v)$-path}.
A path  in a graph $G$ is called a {\sl Hamiltonian  $(u,v)$-path} if it is a spanning $(u,v)$-path.
A graph $G$ is {\sl Hamilton-connected}, if for
each pair of vertices $u,v\in V(G)$, there exists a
Hamiltonian  $(u,v)$-path in $G$.
Obviously, a Hamilton-connected
graph is Hamiltonian.

A graph $G$ is {\sl pancyclic} if it contains a cycle of all lengths from 3 to $|V(G)|$.  A connected graph $G$ is a {\sl split graph} if its vertex set  can be partitioned into a clique and an independent set (either of which may be empty).  Split graphs were introduced by Foldes and Hammer \cite{10}  in 1977 and have been further studied in \cite{1,9,11,12,13,14,15}. 

\begin{thm} {\rm (Renjith and Sadagopan  \cite{9})}\label{Th4}
Let $G$ be a  $K_{1,3}$-free split graph. Then $G$ is Hamiltonian if and only if $G$ is 2-connected.
\end{thm}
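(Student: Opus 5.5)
The necessity direction is immediate: a Hamiltonian graph on at least three vertices is 2-connected, since removing one vertex from a Hamiltonian cycle leaves a spanning path. For sufficiency I fix a split partition $V(G)=K\cup I$, where $K$ is a clique and $I$ is independent, and I choose it with $K$ maximal. Then no vertex of $I$ is adjacent to all of $K$. If $|K|\le 2$ the graph is tiny and can be checked directly, so assume $|K|\ge 3$. The plan is to route a Hamiltonian cycle through $K$, using edges of $K$ to link together short detours through the vertices of $I$.

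\textbf{Step 1: structure from claw-freeness.} Every $x\in I$ has degree at least $2$, because $G$ is 2-connected, and all neighbours of $x$ lie in $K$. Now take $v\in K$. If $v$ had three neighbours in $I$, they would be pairwise nonadjacent and would form a claw centred at $v$. So each vertex of $K$ has at most two neighbours in $I$. A slightly finer claw argument handles $v$ with two $I$-neighbours $x,y$ and a vertex $w\in K$ adjacent to neither $x$ nor $y$: then $\{v;x,y,w\}$ is a claw. Hence if $v$ has two $I$-neighbours, every other vertex of $K$ is adjacent to $x$ or to $y$.

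\textbf{Step 2: assign to each $x\in I$ two private neighbours.} Form the bipartite graph $H$ between $I$ and $K$ using the edges of $G$. I want a set of vertex-disjoint paths $a_x\,x\,b_x$, one for each $x\in I$, with $a_x,b_x\in K$ distinct and all the $a$'s and $b$'s distinct overall. This asks for a subgraph of $H$ in which every $x\in I$ has degree $2$ and every vertex of $K$ has degree at most $1$. By Hall's theorem applied to two copies of $I$, it suffices that $|N(S)|\ge 2|S|$ for every $S\subseteq I$.

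To verify this I use Step 1. The edges between $S$ and $N(S)$ number at least $2|S|$. Each vertex of $N(S)$ receives at most $2$ of them, which gives only $|N(S)|\ge |S|$. The extra factor must come from 2-connectivity together with the second claw observation. Suppose $N(S)$ is small, meaning many of its vertices have two $I$-neighbours inside $S$. Then the claw condition forces those vertices' $I$-neighbours to dominate $K$. This in turn forces $K$ to be small and $G$ to be essentially a cycle-like structure, and in that case a Hamiltonian cycle can be written down directly.

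Concretely, if some $v\in K$ has two $I$-neighbours $x,y$, then $K\subseteq N(x)\cup N(y)$. I would treat that case separately by building the cycle $x\,\ldots\,y\,\ldots$ through $N(x)$ and $N(y)$. In the remaining case every vertex of $K$ has at most one $I$-neighbour. Then the $N(x)$ are pairwise disjoint sets of size at least $2$, so Hall's condition holds trivially.

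\textbf{Step 3: assemble the cycle.} With the disjoint paths $a_x x b_x$ in hand, list the $x\in I$ as $x_1,\dots,x_m$. Write the cycle as $a_{x_1}x_1b_{x_1}\,a_{x_2}x_2b_{x_2}\cdots a_{x_m}x_mb_{x_m}$, followed by the unused vertices of $K$ in any order, and close it back to $a_{x_1}$. Every transition between consecutive segments is an edge inside the clique $K$, so this is a Hamiltonian cycle.

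The main obstacle is Step 2 in the case where some clique vertex has two $I$-neighbours. There the clean Hall argument fails, and I expect a direct construction exploiting $K\subseteq N(x)\cup N(y)$ to be needed.
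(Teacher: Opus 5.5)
The paper only quotes this result from Renjith and Sadagopan and gives no proof of it, so I am judging your argument on its own. As written, it is complete only in the easy case where every vertex of $K$ has at most one neighbour in $I$. The case where some clique vertex has two $I$-neighbours, which you flag yourself, is left as a promise, and that is where the theorem has content.

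Steps 2--3 cannot be rescued in that case, because their target is false in general. Take the 3-sun: $K=\{a_1,a_2,a_3\}$, $I=\{b_1,b_2,b_3\}$, $N(b_i)=\{a_i,a_{i+1}\}$ (indices mod $3$). It is claw-free and 2-connected, this is its only split partition, and $a_1b_1a_2b_2a_3b_3a_1$ is a Hamiltonian cycle. Yet $|N(I)|=3<2|I|$, so no system of private pairs $a_x,b_x$ exists, and no cycle of the shape in Step 3 exists. In general the cycle must be built from alternating segments $c_0x_1c_1x_2\cdots x_mc_m$ ($c_i\in K$, $x_i\in I$), in which a clique vertex is shared by two consecutive $I$-vertices. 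Your claim that a Hall deficiency ``forces $K$ to be small and $G$ cycle-like'' is asserted, not derived. Also, $K\subseteq N(x)\cup N(y)$ for a single pair $x,y$ does not by itself control the other vertices of $I$.

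The missing idea is to apply your second claw observation to all such clique vertices at once. Regard each $v\in K$ with two $I$-neighbours $x,y$ as an edge $xy$ on $I$. Every other clique vertex is adjacent to $x$ or $y$, so:
\begin{itemize}
\item any two of these edges share an endpoint, hence their underlying simple graph is a star or a triangle;
\item no clique vertex has zero $I$-neighbours;
\item a clique vertex whose only $I$-neighbour is $w$ has $w$ on every such edge.
\end{itemize}
This leaves three cases; write $k_{xy}$ for a clique vertex adjacent to $x$ and $y$, and similarly for other pairs.
\begin{itemize}
\item \emph{Star with centre $x$ and at least two distinct leaves.} Then $x$ is adjacent to all of $K$, contradicting the maximality of $K$.
\item \emph{Triangle on $\{x,y,z\}$.} This forces $I=\{x,y,z\}$, with every clique vertex adjacent to exactly two of them. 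The cycle $k_{zx}\,x\,k_{xy}\,y\,k_{yz}\,z\,k_{zx}$, with each remaining clique vertex inserted immediately before one of its $I$-neighbours, is Hamiltonian.
\item \emph{All edges are copies of a single $xy$, coming from some $v$.} Then $I=\{x,y\}$. Degree at least $2$, $K\subseteq N(x)\cup N(y)$ and maximality of $K$ give distinct $a\in N(x)\setminus\{v\}$ and $b\in N(y)\setminus\{v\}$. The path $a\,x\,v\,y\,b$, followed by the rest of $K$, closes into a Hamiltonian cycle.
\end{itemize}
With this case analysis added, your proof is complete. Hall's theorem is not even needed, since in the remaining case the sets $N(x)$ are already pairwise disjoint.
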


Dai et al. \cite{8}
further extended the above result.

\begin{thm} {\rm (Dai et al.  \cite{8})}\label{Th3}
Let $G$ be a  $K_{1,3}$-free split graph. Then $G$ is pancyclic if and only if $G$ is 2-connected.
\end{thm}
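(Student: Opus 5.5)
The plan is to prove the statement with a pancyclicity-extension argument built on Theorem~\ref{Th4}. Write $V(G)=K\cup I$, where $K$ is a clique and $I$ is an independent set. Choose the partition so that $K$ is maximal; then every vertex of $I$ has a non-neighbour in $K$. If $G$ is complete, it is trivially pancyclic, so assume $I\neq\emptyset$.

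For the \emph{necessity}, a pancyclic graph is Hamiltonian, and a Hamiltonian graph is 2-connected. Hence all the work is in sufficiency.

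\textbf{Structural facts from $K_{1,3}$-freeness.} Assume $G$ is 2-connected and $K_{1,3}$-free.
\begin{enumerate}[(a)]
\item Each $x\in K$ has at most two neighbours in $I$. Three such neighbours would form an induced claw centred at $x$, since $I$ is independent.
\item Each $y\in I$ has $d(y)\ge 2$ by 2-connectivity, and $N(y)\subseteq K$ is a clique.
\item Define the bipartite graph $B$ on $K\cup I$ with edge set $E(K,I)$. By (a), vertices of $K$ have degree at most 2 in $B$.
\end{enumerate}

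\textbf{Constructing cycles of each length.} By Theorem~\ref{Th4}, $G$ has a Hamiltonian cycle $C$. Since $I$ is independent, $C$ visits each $y\in I$ between two clique vertices $x_y,x'_y$. Fix a length $\ell$ with $3\le\ell\le |V(G)|$.
\begin{itemize}
\item If $\ell\le |K|$, take a cycle inside the clique $K$.
\item If $\ell>|K|$, write $\ell=|K|+j$ with $1\le j\le |I|$. Start from $C$ and delete $|I|-j$ of the $I$-vertices, bypassing each deleted $y$ by the clique edge $x_yx'_y$. The result is a cycle through all of $K$ and exactly $j$ vertices of $I$, of length $\ell$.
\end{itemize}
This bypassing is always legal: $x_y$ and $x'_y$ both lie in $K$, so they are adjacent. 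They are distinct because $C$ is a cycle of length at least 3. Bypassing several $I$-vertices never conflicts, since each bypass only replaces a length-2 subpath $x_y y x'_y$ with an edge and leaves the other $I$-vertices and their incident cycle edges untouched.

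Every length from 3 to $|V(G)|$ is therefore achieved, provided $|K|\ge 3$. The one point needing care is the degenerate case $|K|\le 2$. Then 2-connectivity and $d(y)\ge2$ for $y\in I$ force $|K|=2$ and every $y\in I$ adjacent to both clique vertices. By (a), $|I|\le 2$, so $G$ is $K_3$ or $K_4-e$, and both are pancyclic.

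I expect the main obstacle to be ensuring that $C$ has the required form, namely that no two $I$-vertices are consecutive on it. This is automatic from the independence of $I$. So the argument reduces cleanly to Theorem~\ref{Th4} together with the small-case check.
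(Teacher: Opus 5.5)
Your proof is correct. The paper gives no proof of this statement, since it only cites it from Dai et al.~\cite{8} as background, so there is no in-paper argument to compare against. Reducing to Theorem~\ref{Th4} is a legitimate, self-contained route.

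The key step holds up. On a Hamiltonian cycle, every vertex of $I$ has both cycle-neighbours in $K$. Before each deletion the current cycle contains all of $K$ plus the vertex $y$ being removed, so its length is at least $|K|+1\ge 4$. In a cycle of length at least $4$, the two neighbours of a vertex are not consecutive, so the shortcut $x_yx'_y$ is a genuinely new edge. Each bypass therefore lowers the length by exactly one and yields a cycle. Together with the cycles inside $K$, this covers every length from $3$ to $|V(G)|$.

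Your argument actually proves something stronger: every Hamiltonian split graph with $|K|\ge 3$ is pancyclic. Claw-freeness is used only through Theorem~\ref{Th4}. As a result, facts (a)--(c) and the maximal choice of $K$ are essentially unnecessary; (a) is used only in the degenerate case.

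There is a small inconsistency in that degenerate case. If $K$ is chosen to be a maximal clique, then $|K|=2$ with some $y\in I$ adjacent to both vertices of $K$ is impossible. So when $I\neq\emptyset$, the case $|K|\le 2$ cannot occur at all, rather than producing $K_3$ or $K_4-e$. Either reading gives the conclusion, so this is cosmetic.

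The necessity direction implicitly assumes $|V(G)|\ge 3$, which is the usual convention for pancyclicity.
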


\begin{thm} {\rm (Dai et al.  \cite{8})}\label{Th2}
If $G$ is a 3-connected $K_{1,4}$-free split graph, then $G$ is Hamiltonian.
\end{thm}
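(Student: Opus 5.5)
Write $V(G)=K\cup I$ with $K$ a clique and $I$ an independent set, and choose $K$ maximal, so that every vertex of $I$ has a non-neighbour in $K$. Since $G$ is 3-connected, every $v\in I$ has $d(v)\ge 3$. The $K_{1,4}$-freeness then forces $d(v)\le 3$ for every $v\in K$ in the following sense: a vertex $u\in K$ has at most three neighbours in $I$. Indeed, four neighbours of $u$ in $I$ would be pairwise non-adjacent and would give an induced $K_{1,4}$ centred at $u$. The goal is a Hamiltonian cycle. Since $K$ is a clique, this is the same as finding a family of vertex-disjoint paths that cover $I$, each path alternating between $K$ and $I$ and starting and ending in $K$. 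Any leftover clique vertices can then be inserted freely, and the paths joined through $K$ into a cycle. I plan to find such a family by building an auxiliary bipartite structure and applying Hall-type matching arguments.

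\textbf{Step 1 (easy case).} Let $H$ be the bipartite graph between $I$ and $K$ formed by the $K$--$I$ edges. Suppose $H$ contains a subgraph in which every $v\in I$ has degree exactly $2$ and every $u\in K$ has degree at most $2$. Then that subgraph is a disjoint union of alternating paths and cycles. If a cycle occurs, I break it by removing one edge, or, when $|I|$ is small, I observe that it already covers everything. The pieces are then concatenated through the clique $K$ into a Hamiltonian cycle. Each piece has its endpoints in $K$, consecutive pieces are joined by clique edges, and unused $K$-vertices are spliced in between.

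\textbf{Step 2 (finding the 2-factor-like subgraph).} To find such a subgraph I split each $u\in K$ into two copies. I then need a matching in which each $v\in I$ receives two distinct copies coming from different vertices of $K$. By a Hall/Gallai-type condition, this exists provided $|N(S)\cap K|\ge |S|+1$ for every nonempty $S\subseteq I$; the slightly stronger requirement $2|N(S)|\ge 2|S|$ alone is not sufficient, since each $v$ needs two distinct neighbours, so I will use the $(2,\le2)$-factor version of Hall. This is where 3-connectivity enters. If $T=N(S)$ were too small, for instance $|T|\le |S|$, then either $T$ together with few other vertices forms a cutset, or the degree counts give a contradiction. Counting edges gives $3|S|\le e(S,T)\le 3|T|$, because each $u\in T$ has at most three neighbours in $I$. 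When $e(S,T)$ is tight, every $u\in T$ sees three vertices of $I$, all inside $S$, and every vertex of $S$ has degree exactly 3. Then $T$ separates $S$ from $K\setminus T$, unless $K=T$. The case $K=T$ means $|K|\le|I|$, and $G$ is a small, very rigid configuration in which every clique vertex has exactly three independent neighbours.

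\textbf{Step 3 (main obstacle).} The hardest step is exactly this tight case: $|K|=|I|$ or close to it, with $G$ close to a 3-regular bipartite incidence structure between $K$ and $I$ plus the clique on $K$. The plain Hall condition can fail here by a margin of $1$, because $|N(S)|=|S|$. I would handle it by allowing a single $K$-vertex to be used three times. In that case I look for a spanning subgraph of $H$ that is a single alternating cycle covering $I$, which is then a Hamiltonian cycle of $G$ by itself. To obtain it, I would start from a 2-factor of the 3-regular bipartite graph $H$, which exists by König's theorem. I would then merge its cycles by switching along $K$-vertices, using the clique edges: two alternating cycles $C_1,C_2$ can be merged if there are adjacent $u_1\in C_1\cap K$ and $u_2\in C_2\cap K$, which is always the case. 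Specifically, I delete an $I$-edge at $u_1$ and one at $u_2$ and then repair the damage, noting that the deleted $I$-vertex still has a third neighbour. Verifying that this repair always succeeds, if necessary with a small case analysis of how the third edges distribute, is where I expect most of the work to lie.
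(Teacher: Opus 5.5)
There is a genuine gap, and it is structural. The only consequence of $K_{1,4}$-freeness you ever use is that a clique vertex has at most three neighbours in $I$. Under that hypothesis plus 3-connectivity, the statement is false.

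Take a 3-connected cubic bipartite non-Hamiltonian graph $H$ with bipartition $(A,B)$, for instance the Horton graph or an Ellingham--Horton graph (counterexamples to Tutte's bipartite conjecture), and make $A$ a clique. This is a 3-connected split graph in which every clique vertex has exactly three neighbours in $B$. But $|A|=|B|$, so a Hamiltonian cycle has $|A|+|B|=2|B|$ edges, and these must be exactly the $2|B|$ distinct edges at vertices of $B$. Hence it would be a Hamiltonian cycle of $H$, which does not exist. Adding one more clique vertex joined only to $A$ gives a non-tight counterexample as well.

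In particular Step 3 cannot work. When $|K|=|I|$, no clique edge can lie on a Hamiltonian cycle, so the fact that $u_1u_2\in E(G)$ always holds carries no information. A repair procedure that merges the cycles of an arbitrary 2-factor of $H$ into one alternating cycle would prove every 3-connected cubic bipartite graph Hamiltonian.

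The missing ingredient is the induced $K_{1,4}$ with \emph{one leaf in the clique}. If $u\in K$ has three $I$-neighbours $x_1,x_2,x_3$, then every $z\in K\setminus\{u\}$ must be adjacent to some $x_i$. In the tight case (all $I$-degrees equal to $3$) this immediately gives $|K|\le 1+3\cdot 2=7$, reducing that case to a finite check. In general it is the tool that kills alternating cycles; compare Claim~\ref{cl3-0} in the proof of Lemma~\ref{lem7}, where $\{y,x,x',x'',u\}$ induces a $K_{1,4}$ with $u\in S$.

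Two further steps also fail as written.

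\textbf{Step 1.} Deleting one edge of an alternating cycle leaves a path ending at an $I$-vertex, which cannot be joined to anything through the clique. A cycle can only be opened by attaching a \emph{free} clique vertex (one not yet used, or an endpoint of another path) to one of its $I$-vertices, as in Claim~\ref{cl1} of Lemma~\ref{lem7}. Proving that such a vertex exists is the real content.

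\textbf{Step 2.} Here you are fighting the wrong obstacle with the wrong criterion. A subgraph with all $I$-degrees $2$ and all $K$-degrees at most $2$ always exists in this setting. Give each edge at $v\in I$ weight $2/d(v)\le\frac{2}{3}$; each $u\in K$ then receives at most $3\cdot\frac{2}{3}=2$, and the bipartite degree-constrained subgraph polytope is integral. This subgraph is just the pseudo $I$-cover of Lemma~\ref{lem3}. Meanwhile $|N(S)\cap K|\ge|S|+1$ is not the correct Hall-type condition for it.

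The paper only cites this theorem, but its proof of Lemma~\ref{lem7} shows the route you need in the analogous setting:
\begin{enumerate}[(1)]
\item take a pseudo $I$-cover with as few alternating cycles as possible;
\item show that the $I$-vertices on a remaining cycle have no free neighbours;
\item use the clique-leaf $K_{1,4}$ together with 3-connectivity to reach a contradiction or a small bounded configuration.
\end{enumerate}
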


Recently, Liu et al. \cite{1} explored  the Hamilton-connectedness of $r$-connected $K_{1,r}$-free split graphs, where $r\in \{3,4\}$.

\begin{thm}{\rm (Liu et al.  \cite{1})} \label{th5}
For any $r\in \{3,4\}$, if
$G$ is a  $r$-connected $K_{1,r}$-free split graph,
then $G$  is Hamilton-connected.
\end{thm}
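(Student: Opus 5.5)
The plan is to reduce the statement to a bipartite matching/path-cover problem. Write $V(G)=K\cup I$, where $K$ is a clique and $I$ an independent set, and choose the partition with $K$ maximal, so that every $x\in I$ has $N(x)\subsetneq K$. Since $G$ is $K_{1,r}$-free, every vertex $v\in K$ has at most $r-2$ neighbours in $I$; otherwise $v$, together with $r-1$ of its $I$-neighbours and any vertex of $K$ not adjacent to those, would give a $K_{1,r}$. For this last step I will use that $|K|$ is large compared with the degrees from $I$; if this fails, I will treat it separately as a small case. Also $d(x)\ge r$ for each $x\in I$, by $r$-connectivity.

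Fix $u,v\in V(G)$. A Hamiltonian $(u,v)$-path is obtained as follows. Each $x\in I\setminus\{u,v\}$ must be an interior vertex, with two path-neighbours in $K$. So it suffices to find a family of vertex-disjoint paths alternating $K$--$I$--$K$ with these properties:
\begin{itemize}
\item every vertex of $I\setminus\{u,v\}$ is an interior vertex of one of the paths;
\item if $u\in I$ (resp.\ $v\in I$), then $u$ (resp.\ $v$) is an endpoint joined to one $K$-vertex;
\item no path uses $u$ or $v$ in the interior when $u,v\in K$.
\end{itemize}
Because $K$ is a clique, such paths, together with the leftover vertices of $K$, can then be concatenated freely into a single $(u,v)$-path, provided $u$ and $v$ end up at opposite ends. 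The only real constraint is that, when both $u,v\in K$ and some path starts at $u$, it must not also end at $v$ unless that path is everything.

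Equivalently, I will build an auxiliary bipartite graph $H$ between $I$ and $K$ and seek a subgraph in which each $x\in I\setminus\{u,v\}$ has degree $2$, each of $u$ and $v$ lying in $I$ has degree $1$, and each $K$-vertex has degree at most $2$, with no cycles. The key step is a Hall-type (or $f$-factor / flow) argument.

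\begin{itemize}
\item \textbf{Capacity bound.} Each $K$-vertex has capacity $2$ and each $I$-vertex demand at most $2$. The degree bound $d_I(w)\le r-2$ for $w\in K$, combined with $d(x)\ge r$, gives for every $S\subseteq I$
\[
|N(S)|\ \ge\ \frac{r\,|S|}{r-2},
\]
by double counting edges between $S$ and $N(S)$. For $r=4$ this yields $|N(S)|\ge 2|S|$, which is exactly the capacity needed.
\item \textbf{Demand count.} For $r=3$ the bound is even stronger. The demand is $2|S|$, minus the endpoint savings.
\item \textbf{Removing $u,v$.} When $u,v\in K$, I will delete them from the capacities; I expect the resulting deficit to be absorbed because $u$ and $v$ are only two vertices.
\end{itemize}
Cycles in the chosen subgraph alternate $I$ and $K$ and use distinct vertices. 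Such a cycle can be broken by rerouting through the clique: remove one $K$--$I$ edge and reconnect in $K$. This contradicts the degree-$2$ demand only if the cycle is forced, and then an exchange along it should fix it.

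I expect the main obstacle to be the tight case $r=4$ with $u,v\in K$. There the Hall inequality $|N(S)|\ge 2|S|$ may hold with equality for some $S$ with $u,v\in N(S)$, leaving insufficient capacity. To handle it, I will first try exploiting that equality forces every $w\in N(S)$ to have exactly two $I$-neighbours in $S$, and every $x\in S$ to have degree exactly $4$. This structure lets $u$ and $v$ serve as endpoints of paths rather than interior vertices: an endpoint $K$-vertex needs only capacity $1$ in the path system, because the path begins there. Recounting with capacity $1$ for $u$ and $v$ then requires $|N(S)|\ge 2|S|-1$ or so, which leaves room. I will also treat the degenerate small cases, such as $|I|\le 1$ or small $|K|$, directly.
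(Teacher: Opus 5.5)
Your key quantitative step relies on the claim that every $w\in K$ has at most $r-2$ neighbours in $I$. This claim is false, and not only in small cases. Let $K=\{w_1,\dots,w_n\}$ with $n\ge r+1$, let $I=\{x_1,\dots,x_{r-1}\}$, and let $N(x_i)=K\setminus\{w_i\}$. Then:
\begin{itemize}
\item $K$ is a maximum clique, since any clique contains at most one $x_i$ and then misses $w_i$.
\item $G$ is $r$-connected: removing $r-1$ vertices leaves a nonempty clique, and each remaining $x_i$ keeps at least $n-r\ge 1$ neighbours in it.
\item $G$ is $K_{1,r}$-free. A centre in $I$ has a clique as neighbourhood. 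A centre in $K$ would need all of $I$ plus a vertex of $K$ adjacent to no $x_i$, and no such vertex exists, because each $w_j$ misses at most one $x_i$ and $r-1\ge 2$.
\end{itemize}
Yet each of $w_r,\dots,w_n$ has $r-1$ neighbours in $I$. The extra $K$-vertex your star argument needs need not exist, because $I$-vertices may have degree $|K|-1$. So ``$|K|$ large compared with the degrees from $I$'' is not something you can arrange.

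The correct bound $d_I(w)\le r-1$ gives only $|N(S)|\ge \frac{r}{r-1}|S|$, which is $\frac{4}{3}|S|$ for $r=4$. Already for $r=4$, $n=5$ the example has $|N(I)|=5<2|I|$. So the inequality $|N(S)|\ge 2|S|$, and the equality analysis of your ``tight case'' built on it, have no basis. Separately, with capacity $2$ on each $K$-vertex the flow condition you actually need is $2|S|\le\sum_{w\in K}\min\{2,d_S(w)\}$, not $|N(S)|\ge 2|S|$.

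With the corrected bound, a flow argument does give a subgraph in which each $I$-vertex has two $K$-neighbours and each $K$-vertex has degree at most $2$. But that is only a pseudo $I$-cover. Lemma~\ref{lem3} already supplies one under the weaker hypothesis of $K_{1,r+1}$-freeness, and there are $r$-connected $K_{1,r+1}$-free split graphs that are not Hamilton-connected. So everything of substance lies in the two steps you leave as expectations.

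The mechanism you give for the first step, breaking alternating cycles, does not work. Deleting a $K$--$I$ edge of an alternating cycle turns that $I$-vertex into a path end, and no edge of the clique can give it a second path-neighbour. Opening a cycle requires an $I$-vertex on it with a $G$-neighbour in $K$ that is unused or is an endpoint of another path, or else a rerouting across several paths. You must then show, using $K_{1,r}$-freeness and connectivity rather than degree counts, that this is always possible.

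Likewise, for the second step, the cover must be modified for each given $u,v$ before the concatenation step (Lemma~\ref{lem9}) applies. Neither $u$ nor $v$ may be an interior $K$-vertex, and they must not be the two ends of one path. Simply deleting $u,v$ from the capacities already fails for $r=3$ when some $x\in I$ has $N(x)=\{u,v,w\}$. This structural work is what the cited argument of Liu et al.\ consists of; this paper quotes the theorem rather than reproving it, but Lemmas~\ref{lem7} and~\ref{le5-1} show the same two points absorbing all the effort in the analogous setting. Your proposal does not supply it.
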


\iffalse
\begin{thm}{\rm (Liu et al.  \cite{1})} \label {th6}
Let $G$ be a  $4$-connected $K_{1,4}$-free split graph. Then $G$  is Hamilton-connected.
\end{thm}
\fi

\begin{figure}%\label {Fig1}%[htbp]
  \centering
   %Requires \usepackage{graphicx}
  \includegraphics[width=0.65\linewidth]{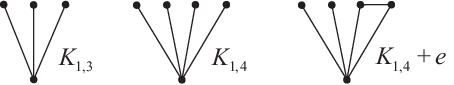}

  (a) \hspace{2.55 cm} (b)
  \hspace{3.0 cm} (c) \hspace{1.5 cm}{}

  \caption{The graphs $K_{1,3},K_{1,4}~\text{and}~ K_{1,4}+e$.}
  \label{fig1}
\end{figure}

Let $r\geq 3$ be an integer and let $K_{1,r+1}+e$ be the graph obtained from $K_{1,r+1}$ by
adding an edge joining
two nonadjacent vertices in $K_{1,r+1}$.
An example is shown in
Fig. \ref{fig1} (c). The purpose of this paper is to study the Hamilton-connectedness of $\{K_{1,4},K_{1,4}+e\}$-free graphs. Obviously, a $K_{1,3}$-free graph is also a $\{K_{1,4},K_{1,4}+e\}$-free graph, but the reverse is not true. The Hamiltonicity of $\{K_{1,4},K_{1,4}+e\}$-free graphs was first studied by Li and Schelp \cite{3}  in 2002.

\begin{thm} {\rm (Li  and Schelp \cite{3})} \label {Th7}
Let $G$ be a 3-connected $\{K_{1,4},K_{1,4}+e\}$-free   graph of order $n\geq 30$. If $\delta(G)\geq (n+5)/5$, then $G$ is Hamiltonian.

\end{thm}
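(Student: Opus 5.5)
The plan is a longest-cycle argument combined with a degree-counting contradiction. Suppose that $G$ satisfies the hypotheses of Theorem~\ref{Th7} but is not Hamiltonian. Let $C$ be a longest cycle of $G$ with a fixed orientation, and let $H$ be a component of $G-V(C)$. Let $v_1,\dots,v_k$ be the vertices of $C$ that have a neighbour in $H$, listed in the order of $C$. Since $G$ is 3-connected, $k\ge 3$. The standard consequences of the maximality of $C$ are the following:
\begin{itemize}
\item no successor $v_i^+$ has a neighbour in $H$, and no two successors $v_i^+,v_j^+$ are adjacent;
\item more generally, the usual crossing patterns are forbidden, e.g.\ $v_i^+v_j^+$ with $v_i^-v_j^-$, or $v_i^+w$, $v_j^+w^+$ for $w\in C$.
\end{itemize}
Fix $x\in V(H)$. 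Then $\{x,v_1^+,\dots,v_k^+\}$ is an independent set, and the same holds with predecessors in place of successors.

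The core step uses the $\{K_{1,4},K_{1,4}+e\}$-freeness. This condition says that every $4$-subset of $N(v)$ spans at least two edges. Apply it at an attachment vertex $v_i$. The vertex $v_i$ is adjacent to $h\in H$, to $v_i^-$ and to $v_i^+$, and $h$ is adjacent to neither $v_i^-$ nor $v_i^+$. So for any fourth neighbour $w$ of $v_i$, the set $\{h,v_i^-,v_i^+,w\}$ must contain two edges. Since $\delta$ is large, $v_i$ has many such neighbours $w$. With the longest-cycle exclusions, this forces strong local structure: typically $v_i^-v_i^+\in E(G)$, and almost all other neighbours of $v_i$ lie in $N(v_i^-)\cup N(v_i^+)$ or in $N(h)$. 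I would use this to show two things. First, $k\ge 4$: if $k=3$, the local structure at the three attachment vertices should allow rerouting $C$ through $H$, giving a longer cycle. Second, for any four attachment vertices the five vertices $x,v_1^+,v_2^+,v_3^+,v_4^+$ have nearly disjoint closed neighbourhoods.

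The final step is counting. From the independent $5$-set $S=\{x,v_1^+,\dots,v_4^+\}$, I aim to show that every vertex of $G$ lies in at most one of the sets $N(s)$, $s\in S$, with a bounded number of explicit exceptions. Then
\[
5\delta \le \sum_{s\in S} d(s)\le n-5+c
\]
for a small constant $c$, with the aim $c\le 0$, which contradicts $\delta\ge (n+5)/5$. Vertices in $H$ can only be adjacent to $x$ among $S$. For a vertex $w$ of $C$ adjacent to both $v_i^+$ and $v_j^+$, we use $K_{1,4}+e$-freeness at $w$ together with the forbidden crossing patterns to derive a longer cycle. The hypothesis $n\ge 30$ is used to ensure each neighbourhood is large enough that the local configurations above actually occur.

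I expect the main obstacle to be this overlap control. It requires a careful case analysis of where a common neighbour $w$ sits on $C$ relative to $v_i,v_j$ and of which edges among $\{v_i^-,v_i^+,h,w\}$ supply the two required edges. I would first try to handle it by choosing $C$ and $H$ extremally, for example with $|H|$ maximal among longest cycles, so that fewer cases survive.
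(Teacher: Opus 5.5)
The paper gives no proof of Theorem~\ref{Th7}. It is quoted from Li and Schelp as background only, so there is no in-paper argument to compare your route with. Judged on its own, your outline has genuine gaps, and they sit exactly at the two steps that carry the argument.

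First, you assert $k\ge 4$ (\emph{the local structure should allow rerouting}) but do not prove it. Three-connectedness gives only $k\ge 3$, and $\{v_1,v_2,v_3\}$ may well be a $3$-cut. The $\{K_{1,4},K_{1,4}+e\}$-condition at $v_i$ only constrains edges among $h$, $v_i^{\pm}$ and further neighbours of $v_i$, and you give no mechanism by which this yields a cycle through $H$. The step cannot be skipped. With $k=3$ your independent set $\{x,v_1^+,v_2^+,v_3^+\}$ has four vertices, and even pairwise disjoint neighbourhoods give only $4\delta\le n-4$. That is compatible with $\delta\ge (n+5)/5$ as soon as $n\ge 40$. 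So the case $k=3$ needs a different source for a fifth vertex, for instance using $|V(H)|\ge \delta-2$, which holds when $k=3$. You have not supplied one.

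Second, the near-disjointness of the five neighbourhoods does not follow from the tools you invoke. The crossing lemmas compare $N(v_i^+)$ with a \emph{shift} of $N(v_j^+)$; they say nothing about common neighbours. Let $w$ lie on $v_j^+\overrightarrow{C}v_i$ with:
\begin{itemize}
\item $w\in N(v_i^+)\cap N(v_j^+)$;
\item $v_i^+w^+,\ v_j^+w^-\in E(G)$;
\item $w^-w^+\notin E(G)$.
\end{itemize}
Your own crossing rule (the pattern $v_a^+y,\ v_b^+y^+$, applied with $y=w$ and with $y=w^-$) excludes $v_j^+w^+$ and $v_i^+w^-$. So the $4$-set $\{v_i^+,v_j^+,w^-,w^+\}\subseteq N(w)$ spans exactly two edges. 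The $\{K_{1,4},K_{1,4}+e\}$-condition at $w$ is satisfied and no forbidden crossing occurs, so neither tool excludes $w$, and nothing in the sketch bounds the number of such vertices.

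Hence the inequality $\sum_{s\in S}d(s)\le n-5+c$ with $c\le 9$ is unsupported. Note that $c\le 9$ is what the hypothesis actually requires; $c\le 0$ is more than needed. Relatedly, $v_i^-v_i^+\in E(G)$ is not forced here as it is for claw-free graphs. If it fails, the condition only says every further neighbour of $v_i$ is adjacent to two of $h,v_i^-,v_i^+$.

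What is missing is a genuine five-vertex counting argument, e.g.\ with shifted neighbourhoods along the segments of $C$. It must be combined with a structural use of the forbidden subgraphs strong enough to gain the additive constant. That is where the difficulty of the theorem lies, and the proposal leaves it open.
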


Later, Li \cite{2} considered the Hamiltonicity of 2-connected $\{K_{1,4},K_{1,4}+e\}$-free   graphs.

\begin{thm} {\rm (Li \cite{2})} \label {Th8}
Let $G$ be a 2-connected $\{K_{1,4},K_{1,4}+e\}$-free   graph of order $n\geq 13$. If $\delta(G)\geq n/4$, then $G$ is Hamiltonian or $G\in \mathcal F$, where $\mathcal F$ is a family of non-Hamiltonian graphs of connectivity 2.

\end{thm}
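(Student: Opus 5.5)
The plan is the standard longest-cycle argument. Let $G$ be a 2-connected $\{K_{1,4},K_{1,4}+e\}$-free graph of order $n\ge 13$ with $\delta(G)\ge n/4$, and suppose $G$ is not Hamiltonian. We will show $G\in\mathcal F$.

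\textbf{Setup.} Choose a longest cycle $C$ with a fixed orientation. Let $H$ be a component of $G-V(C)$ and $x\in V(H)$. Let $x_1,\dots,x_k$ be the vertices of $C$ adjacent to $H$, in cyclic order. Since $G$ is 2-connected, $k\ge 2$. The usual exchange arguments (maximality of $C$) give:
\begin{itemize}
\item the successors $x_1^+,\dots,x_k^+$ have no neighbours in $H$;
\item the successors $x_1^+,\dots,x_k^+$ are pairwise nonadjacent;
\item for $i\ne j$, no two ``crossing'' chords $x_i^+y$ and $x_j^+y^+$ exist (and the analogous statement for predecessors).
\end{itemize}
Hence $S=\{x,x_1^+,\dots,x_k^+\}$ is independent. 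The crossing-chord property shows that the sets $N(s)$ for $s\in S$ are almost disjoint: any vertex of $C$ lies in at most one of the sets $N(x_i^+)^-$, apart from a bounded number of exceptions. Counting gives
\[
|S|\cdot\frac n4\;\le\;\sum_{s\in S}d(s)\;\le\; n+O(1),
\]
and with $n\ge 13$ this forces $|S|\le 4$, so $k\le 3$.

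\textbf{Using the forbidden subgraphs.} For each $x_i$, consider a neighbour $h_i\in V(H)$ together with $x_i^-$ and $x_i^+$. We have $h_i\not\sim x_i^\pm$, and by the exchange arguments $x_i^-\not\sim x_i^+$ in most configurations. So any fourth neighbour $w$ of $x_i$ must be adjacent to at least two of $h_i,x_i^-,x_i^+$. If $w$ were adjacent to none of them, $\{x_i;h_i,x_i^-,x_i^+,w\}$ would induce $K_{1,4}$; if adjacent to exactly one, it would induce $K_{1,4}+e$. Combined with $\delta\ge n/4$, this gives two consequences:
\begin{itemize}
\item $N(x_i)$ is covered by at most two cliques, essentially $N(h_i)$ and a clique around $x_i^-$ or $x_i^+$;
\item each segment of $C$ between consecutive $x_i$'s has length at least about $n/4$.
\end{itemize}
We then treat $k=2$ and $k=3$ separately. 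If $k=3$, the three segments and $H$ each carry at least about $n/4$ vertices. The clique structure at the $x_i$ lets us reroute $C$ through $H$, lengthening it, which is a contradiction.

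\textbf{Main obstacle.} The hardest step will be the case $k=2$, where $\{x_1,x_2\}$ is a 2-cut. I expect to show the following:
\begin{itemize}
\item every component of $G-\{x_1,x_2\}$ is dense, being a union of at most two cliques attached to the cut vertices;
\item there are at least three such components, since otherwise a Hamiltonian cycle can be built through the two-clique structure;
\item each component has order at least about $n/4$ by the degree condition.
\end{itemize}
These facts pin $G$ down to the explicit family $\mathcal F$ of non-Hamiltonian graphs with connectivity 2. In this step I would first try to prove that $H$ itself is a clique, i.e., $N(x_i)\cap V(H)$ spans a clique, using $K_{1,4}+e$-freeness at $x_i$. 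After that, the remaining structure follows by symmetric reasoning on each segment of $C$. The bound $n\ge 13$ is needed to rule out small sporadic configurations in which the degree counting is not strict enough.
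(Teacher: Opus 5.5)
The paper does not prove this statement; it is quoted from Li~\cite{2}. Your sketch therefore has to stand on its own, and it does not: the step that drives everything, $k\le 3$, is unsupported. The crossing-chord property does not make the neighbourhoods of $S=\{x,x_1^+,\dots,x_k^+\}$ almost disjoint. It only gives pairwise Ore-type bounds such as $d_C(x_i^+)+d_C(x_j^+)\le |C|$. In $K_{m,m+1}$, with $x$ the vertex off a longest cycle, every crossing-chord condition holds, yet all $m$ successors share the same $m$ neighbours. What the hypotheses add is that each vertex is adjacent to at most three vertices of the independent set $S$ (no induced $K_{1,4}$). That gives $|S|\,n/4\le\sum_{s\in S}d(s)\le 3(n-|S|)$, i.e.\ only $|S|\le 11$. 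To exclude $|S|=5$ at $n=13$ you would need $\sum_{s\in S}d(s)\le n+3$, and nothing in your sketch produces an additive error that small. Refinements combining crossing chords with $K_{1,4}+e$-freeness carry error terms that grow with $k$ and with the other components of $G-V(C)$. Your whole case split into $k=2$ and $k=3$ rests on this bound.

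The local step at $x_i$ is also unjustified. No exchange argument forbids $x_i^-x_i^+\in E(G)$: shortcutting $x_i$ only gives a cycle of length $|C|-1$, and in claw-free graphs (which lie in the class) this edge is forced. When the edge is present, the forbidden subgraphs only force each further neighbour $w$ of $x_i$ to be adjacent to at least one of $h_i,x_i^-,x_i^+$. Even when $\{h_i,x_i^-,x_i^+\}$ is independent, ``adjacent to at least two of them'' does not cover $N(x_i)$ by two cliques. Two nonadjacent vertices both adjacent to $h_i$ and $x_i^-$ create no induced $K_{1,4}$ or $K_{1,4}+e$ at $x_i$, and then $N(x_i)$ needs three cliques. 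Likewise, nothing shows the segments have length about $n/4$, since $x_i^+$ need not have its neighbours in its own segment.

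Your effort is also misallocated. For the statement as formulated, $k=2$ is the trivial case: $N(H)=\{x_1,x_2\}$ is then a $2$-cut, so $\kappa(G)=2$ immediately. The real content is that $k\ge 3$ cannot occur, i.e.\ that $3$-connected graphs in the class are Hamiltonian. You settle that with one sentence about rerouting through a clique structure you have not established. Pinning down an explicit $\mathcal F$, as in Li's paper, is a further layer your argument never reaches.
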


Ryj\'{a}\v{c}ek et al. \cite{6} proved that the minimum degree condition can be significantly decreased to 6 if the connectivity is increased to at least 5 when considering the Hamiltonicity of $\{K_{1,4},K_{1,4}+e\}$-free   graphs.

\begin{thm} {\rm (Ryj\'{a}\v{c}ek et al. \cite{6})} \label {Th9}
Every  5-connected $\{K_{1,4},K_{1,4}+e\}$-free   graph with minimum degree at least 6 is Hamiltonian.

\end{thm}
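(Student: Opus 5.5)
The natural route is to encode $G$ by a hypergraph whose "line graph" is $G$, and then to reduce Hamiltonicity of $G$ to a spanning-subgraph problem in that hypergraph, which the connectivity assumption can handle.

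\textbf{Local structure.} Fix a vertex $v$ and let $x,y,z$ be an independent triple in $N(v)$. Every other neighbour $w$ of $v$ is adjacent to at least two of $x,y,z$:
\begin{itemize}
\item if $w$ has no neighbour among them, then $\{v,w,x,y,z\}$ induces $K_{1,4}$;
\item if $w$ has exactly one neighbour among them, then $\{v,w,x,y,z\}$ induces $K_{1,4}+e$.
\end{itemize}
Since $G$ is $K_{1,4}$-free, $\alpha(G[N(v)])\le 3$. The aim of this step is to prove that $N(v)$ is the union of at most three cliques. I expect the case $\alpha=2$ to follow by a similar small case analysis, using $\delta(G)\ge 6$ to exclude degenerate configurations.

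\textbf{The cover.} From this local structure, build a family $\mathcal I$ of maximal cliques (an $I$-cover) such that:
\begin{itemize}
\item every edge of $G$ lies in some member of $\mathcal I$;
\item every vertex lies in at most three members;
\item two members meet in at most one vertex, apart from a controlled list of exceptions treated separately.
\end{itemize}
Form the hypergraph $H$ whose vertices are the members of $\mathcal I$ and whose hyperedges correspond to vertices of $G$; each hyperedge contains the two or three cliques through that vertex. Then $G$ is, essentially, the line graph of $H$.

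\textbf{Translation to $H$.} By a Harary--Nash-Williams-type argument, a Hamiltonian cycle of $G$ corresponds to a connected spanning "even" structure in $H$ that dominates all hyperedges. Concretely, one needs a connected subhypergraph using some hyperedges of rank $2$ or $3$ that touches every hyperedge, in which each rank-$3$ hyperedge is used in one of its three possible pairings.

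\textbf{Connectivity and tree packing.} The $5$-connectivity of $G$ gives a strong edge-connectivity of $H$, roughly $5$-edge-connectedness after replacing each rank-$3$ hyperedge by a triangle or a star. A Nash-Williams--Tutte tree-packing argument then produces two edge-disjoint spanning trees. These trees yield a spanning closed trail in a suitable reduction of $H$, which lifts to a Hamiltonian cycle in $G$.

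\textbf{Main obstacle.} The hardest step is the lifting from $H$ back to $G$. Rank-$3$ hyperedges (vertices lying in three cliques) can each be used only through one pairing, so the two spanning trees must be chosen compatibly with this restriction. Vertices in three cliques also weaken the passage from vertex-connectivity of $G$ to edge-connectivity of $H$. My first attempt would be to replace each rank-$3$ hyperedge by a star with a new centre, and to use the fact that $5$-connectivity guarantees enough edge-disjoint spanning trees avoiding those centres as cut structures. The exceptional configurations where two cliques share two or more vertices would be contracted beforehand.
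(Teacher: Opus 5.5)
\textbf{Context.} The paper does not prove this statement; it is quoted from Ryj\'{a}\v{c}ek et al.\ \cite{6}. As far as I know, it is obtained there by a closure argument. Local completions that preserve Hamiltonicity and $\{K_{1,4},K_{1,4}+e\}$-freeness, and only add edges (so $5$-connectivity and $\delta\ge 6$ survive), turn $G$ into a line graph. Then Kaiser and Vr\'ana's theorem, that every $5$-connected line graph with minimum degree at least $6$ is Hamiltonian, finishes the proof. You try to work with $G$ directly, and this leaves two genuine gaps.

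\textbf{Your local structure and the cover $\mathcal I$ do not exist in general.} The hypothesis at $v$ says only that $G[N(v)]$ has no induced $4K_1$ and no induced $K_2\cup 2K_1$. Equivalently, the complement of $G[N(v)]$ has no induced $K_4$ or diamond. Covering $N(v)$ by three cliques means this complement is $3$-colourable, but triangle-free graphs already have unbounded chromatic number.

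Concretely, let $\Gamma$ be the Gr\"otzsch graph and $G=K_1\vee\overline{\Gamma}$ with apex $v$.
\begin{itemize}
\item Four vertices of a triangle-free graph span at most four edges, so any four vertices of $\overline{\Gamma}$ span at least two edges. The apex is adjacent to everything, so $G$ is $\{K_{1,4},K_{1,4}+e\}$-free.
\item $\delta(G)=6$.
\item $\kappa(G)\ge 5$: a separator of size at most $3$ in $\overline{\Gamma}$ would give a $K_{a,b}\subseteq\Gamma$ with $a+b\ge 8$. This is impossible, since $\Delta(\Gamma)=5$ and only six vertices of $\Gamma$ have degree at least $4$.
\end{itemize}
Yet $N(v)=V(\overline{\Gamma})$ needs $\chi(\Gamma)=4$ cliques. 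The members of $\mathcal I$ through $v$ would cover $N(v)$ by at most three cliques, so no such $\mathcal I$ exists, and $G$ is not the line graph of any hypergraph of rank at most $3$. Such configurations can be planted inside arbitrarily large graphs, so they are not a ``controlled list of exceptions''. This is the same phenomenon as the claw-free wheel $W_5$ not being a line graph. The missing ingredient is a step like the closure of \cite{6}, which removes these configurations while preserving (non-)Hamiltonicity.

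\textbf{The tree-packing step does not go through either.} Even when $G=L(H)$ for an ordinary graph $H$, $5$-connectivity of $G$ gives only essential $5$-edge-connectivity of $H$. The condition $\delta(G)\ge 6$ merely forces the neighbours of degree-$3$ vertices of $H$ to have degree at least $5$. For instance, if $H$ is $(3,5)$-biregular then $\delta(L(H))=6$, while $|E(H)|=\tfrac{15}{8}|V(H)|<2(|V(H)|-1)$ for $|V(H)|>16$. In that case Nash-Williams--Tutte gives no two edge-disjoint spanning trees, hence no Jaeger-type spanning closed trail.

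Handling exactly this is the whole content of Kaiser and Vr\'ana's paper: they turn degree-$3$ vertices into rank-$3$ hyperedges and prove the existence of suitable quasigraphs that use one pairing per hyperedge. Your ``main obstacle'' paragraph correctly names this difficulty but does not resolve it, so the proposal is an outline of a strategy rather than a proof.
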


Ryj\'{a}\v{c}ek et al. \cite{6} also posed the following two equivalent conjectures.

\begin{con} {\rm (Ryj\'{a}\v{c}ek et al. \cite{6})} \label {con2}
Every  4-connected $\{K_{1,4},K_{1,4}+e\}$-free   graph with minimum degree at least 6 is Hamiltonian.

\end{con}

\begin{con} {\rm (Ryj\'{a}\v{c}ek et al. \cite{6})} \label {con3}
Every  4-connected line  graph with minimum degree at least 5 is Hamiltonian.

\end{con}

Obviously, Conjecture \ref{con3} is a weaker version of ~Thomassen's conjecture \cite{25}, which states that  every  4-connected line  graph is Hamiltonian. Ryj\'{a}\v{c}ek  \cite{27} showed that Thomassen's conjecture is equivalent to the Matthews-Sumner's conjecture \cite{26}, which asserts that every  4-connected $K_{1,3}$-free graph is Hamiltonian. For more information on  equivalent conjectures, we refer the reader to \cite{28}.
For the traceability of $\{K_{1,4},K_{1,4}+e\}$-free   graphs, we refer the reader to \cite{4,5,7}.

There exists an infinite family of
$r$-connected $K_{1,r+1}$-free split graphs, each of which is not
Hamilton-connected \cite{1}.
In this article, we focus on
$\{K_{1,4},K_{1,4}+e\}$-free
split graphs and establish
the following result on their
Hamilton-connectedness.

\begin{thm}\label {Th1}
If $G$ is a $3$-connected $\{K_{1,4},K_{1,4}+e\}$-free split graph of order at least $13$, then $G$ is Hamilton-connected.
\end{thm}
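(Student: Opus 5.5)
Fix a split partition $V(G)=K\cup I$ with $K$ a clique and $I$ independent, chosen with $K$ maximal, so that every vertex of $I$ has a non-neighbour in $K$. The plan is to reduce Hamilton-connectedness to a combinatorial covering problem on the bipartite incidence structure between $I$ and $K$. This is the ``$I$-cover'' idea suggested by the keywords.

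\textbf{Step 1: local structure.} Since $G$ is 3-connected, every $x\in I$ has $d(x)=|N(x)|\ge 3$, and $N(x)\subseteq K$. For two vertices $y,z\in I$ and a vertex $w\in K$ adjacent to both, freeness gives strong constraints:
\begin{itemize}
\item If $w$ has three neighbours $x_1,x_2,x_3\in I$ and one more vertex $u\in K$ nonadjacent to all three, then $\{w,x_1,x_2,x_3,u\}$ induces $K_{1,4}$.
\item If $u\in K$ is adjacent to exactly one of $x_1,x_2,x_3$, these five vertices induce $K_{1,4}+e$.
\end{itemize}
Hence whenever $w\in K$ has at least three $I$-neighbours $x_1,x_2,x_3$, every other vertex of $K$ is adjacent to at least two of them. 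If $w$ has at least four $I$-neighbours, applying this to every triple forces each vertex of $K$ to miss at most one of them. Combined with $|V(G)|\ge 13$, this yields a dichotomy:
\begin{itemize}
\item either every vertex of $K$ has at most two neighbours in $I$, which is the ``sparse'' case;
\item or some vertex of $K$ has at least three $I$-neighbours, and then the $I$-vertices near it are adjacent to almost all of $K$, which is the ``dense'' case.
\end{itemize}
I would record these facts as a short sequence of lemmas.

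\textbf{Step 2: reduction to path covers.} Given endpoints $u,v$, a Hamiltonian $(u,v)$-path is equivalent to the following object. Write $I'=I\setminus\{u,v\}$. We need a family of vertex-disjoint paths in $G$ with two properties:
\begin{itemize}
\item each path alternates between $K$ and $I$, starts and ends in $K$, and every $x\in I'$ is an interior vertex of one of them;
\item the paths, together with the pieces at $u$ and $v$, can be concatenated through the clique $K$.
\end{itemize}
Since $K$ is complete, the concatenation is automatic once there are enough $K$-endpoints and the pieces at $u$ and $v$ are handled. In the simplest form, we need an injective assignment to each $x\in I'$ of two neighbours in $K$, with all assigned pairs distinct and chains allowed, leaving free $K$-vertices for $u,v$ if they lie in $I$.

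It therefore suffices to find a subgraph $H$ of the bipartite graph $B$ between $I$ and $K$ with the following degrees:
\begin{itemize}
\item $d_H(x)=2$ for every $x\in I'$;
\item $d_H(u)=1$ if $u\in I$, and likewise for $v$;
\item $d_H(w)\le 2$ for every $w\in K$;
\end{itemize}
and such that $H$ is a linear forest avoiding the prescribed endpoints appropriately. Existence of such an $H$ is a $b$-matching problem, which I would attack by Hall/Tutte-type counting, that is, by checking $|N_B(S)|$ against $|S|$ for $S\subseteq I'$.

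\textbf{Step 3: verify the Hall-type condition.}
\begin{itemize}
\item \emph{Sparse case.} Each $w\in K$ has at most two $I$-neighbours while each $x\in I$ has at least three $K$-neighbours. Counting edges gives $3|S|\le 2|N(S)|$, so $|N(S)|\ge 1.5|S|$. This is enough surplus for the degree-2 demand after an exchange argument that also removes cycles in $H$, replacing a cycle by a path using an extra $K$-neighbour.
\item \emph{Dense case.} The strong adjacency from Step 1 lets us build the path greedily, with explicit small exceptions handled by hand.
\end{itemize}

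\textbf{Main obstacle.} The hardest part is the boundary situations:
\begin{itemize}
\item $u,v\in I$ with common neighbourhood constraints, where degree slack is tight (e.g. $|K|$ close to $|I|$);
\item ruling out cycles in $H$ while keeping degrees.
\end{itemize}
This is also where the order bound $13$ must be used, since small counterexamples presumably exist. For these situations I would first try a counting argument: a tight Hall violator forces many $K$-vertices with exactly two $I$-neighbours. Together with 3-connectivity, the $K_{1,4}+e$ constraint then bounds $|V(G)|\le 12$, contradicting the order assumption.
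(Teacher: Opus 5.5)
\textbf{There is a genuine gap.} Your reformulation in Step 2 is sound in principle. Deleting the clique edges of a Hamiltonian $(u,v)$-path leaves exactly such an $H$; this is the $I$-cover viewpoint behind Lemma~\ref{lem9}. However, the conditions you leave as ``avoiding the prescribed endpoints appropriately'' are where the whole difficulty lies.
\begin{itemize}
\item If $u$ lies in the clique (your $K$, the paper's $S$), you need $d_H(u)\le 1$, not $d_H(u)\le 2$; likewise for $v$.
\item In all cases, $u$ and $v$ must not be the two ends of one non-spanning component of $H$.
\end{itemize}
The second condition is global, not a degree condition, so no Hall/Tutte/$b$-matching count can enforce it. Acyclicity is not a Hall-type property either: a subgraph obtained from Hall is in general a union of alternating paths and even cycles, i.e.\ a pseudo $I$-cover as in Lemma~\ref{lem3}.

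Your own sparse-case count shows that degrees are never the obstruction. For $T\subseteq I$ the demand is at most $2|T|$. The available capacity $\sum_{w\in K}\min(c(w),|N(w)\cap T|)$ is at least $3|T|-2$, even with $c(u)=c(v)=1$, which suffices for $|T|\ge 2$; the case $|T|=1$ is trivial since $|N(x)|\ge 3$. Your cycle repair also works in the sparse case: swap a cycle edge at $x$ for an unused edge $xw'$, which opens the cycle because $d_H(w')\le 1$. It fails exactly when $w'\in\{u,v\}$ or when the swap joins the components of $u$ and $v$. So the order bound cannot enter through a ``tight Hall violator''. The small obstructions are configurations in which every admissible choice either puts $u$ in the interior of a path or puts $u$ and $v$ at the two ends of one path. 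Your proposal contains no argument excluding these.

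The dense case (``build the path greedily, with small exceptions by hand'') contains no argument at all. A minor point on Step 1: a clique vertex cannot have four $I$-neighbours, since they would induce $K_{1,4}$. The useful consequence of your Step 1 is this: if $w$ has three $I$-neighbours $X$, then every other clique vertex has at most one $I$-neighbour outside $X$, which is Lemma~\ref{lem5}.

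What is missing is an exchange argument that controls where $u$ and $v$ sit. The paper supplies it in these steps:
\begin{itemize}
\item It first builds an $I$-cover that ignores $u,v$. The cycles of a pseudo $I$-cover are removed in Lemma~\ref{lem7} using the forbidden configurations of Lemmas~\ref{lem5} and~\ref{lem6}, which requires $|V(G)|\ge 9$.
\item It shortens all paths to length at most $6$ (Lemma~\ref{lem8}).
\item It disposes of the case where $u,v$ are the ends of one path (Lemma~\ref{le5-1}).
\item For fixed $u,v$, it chooses a cover minimizing $|\{u,v\}\cap Inn(\mathcal P)|$ and then maximizing the number of paths. Explicit two-edge swaps then show that any surviving configuration contains an induced $K_{1,4}$ or $K_{1,4}+e$.
\end{itemize}
The hypothesis $|V(G)|\ge 13$ is used there to find two clique vertices off a path $P_1$ with at most seven vertices. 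Once the swaps have confined the neighbourhoods of $u$'s two $I$-neighbours on $P_1$ to $P_1$, these two clique vertices give an induced $K_{1,4}+e$ centred at $u$. Until you supply an argument of this kind, your proposal is an outline rather than a proof.
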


Obviously, Theorem \ref{Th1} also extends  Theorem~\ref{th5}
due to Liu, Song, Zhan and Lai.
It is because  there exist infinite many 3-connected
$K_{1,4}$-free
split graphs that are not Hamilton-connected.

 The remainder of this manuscript is organized as follows. In Section~2,
 we mainly introduce some
 known results
 due to Liu et al.  \cite{1}.
 In Section~3, we provide
 two forbidden subgraphs
 of a split graph $G$
 which is $\{K_{1,4}, K_{1,4}+e\}$-free.
 In Section~4, we show that
 a $3$-connected split graph $G$ which is $\{K_{1,4}, K_{1,4}+e\}$-free contains
 an $I$-cover
 for any split partition $(S,I)$.
 The proof of Theorem~\ref{Th1} is given in Section~5.
 Finally, in Section~6, we pose an open problem.

\section{Preliminary}

In this section, we  present some useful lemmas.

For two graphs $G_{1}$ and $G_{2}$, let $G_{1}\cup G_{2}$ be a graph with vertex set $V(G_{1})\cup V(G_{2})$ and edge set $E(G_{1})\cup E(G_{2})$.
For any $X\subseteq V(G)$,
let $G[X]$ denote
	the subgraph
of $G$ induced by $X$,
where $X\ne \emptyset$,
and let  $G-X=G[V(G)\setminus X]$, where $X\ne V(G)$.
 Similarly,
 for any $Y\subseteq E(G)$,
 let $G[Y]$ denote the spanning
 subgraph of $G$ with edge set $Y$, and let
 $G-Y=G[E(G)\setminus Y]$.
 For a subgraph $H$ of $G$ with $ E(H)\cap Y =\emptyset$,
 we write $H+Y$ for $H\cup G[Y]$.

Let $G$ be a split graph with $V(G)=S\cup I$, where $S$
 is a maximum clique of $G$ and $I$ is an independent set of $G$. Following Liu et al.  \cite{1}, we call such an ordered pair $(S,I)$ a {\sl split partition} of $G$. Denote $s=|S|$.
 Then, $G[S]\cong K_{s}$ and  $G[I]\cong |I| K_{1}$.
 Since any complete graph of order at least three is Hamilton-connected, we may assume that $|I|>0$. Let $I=\{v_{1},v_{2},\ldots,v_{|I|}\}$. Then $N_{G}(v)\subseteq S$ for any $v\in I$. If $\kappa(G)\geq k$, then $s\geq k$. By the maximality of $s$, $s\geq k+1$.

 A path $P=a_{1}a_{2}\ldots a_{2t+1}$ in $G$ is called
 an {\sl $(S,I)$-alternating path}
 if $a_{1}\neq a_{2t+1}$
 and $V(P)\cap S=\{a_{1},a_{3},\ldots, a_{2t+1}\}$.
  Let $End(P)=\{a_1,a_{2t+1}\}$ and
  $P^{o}=V(P)\setminus End(P)$ be the {\sl interior} of the path $P$.
 Similarly,
 a cycle $C=a_{1}a_{2}\ldots a_{2t+1}$ in $G$ is called
 an {\sl $(S,I)$-alternating cycle}
 where $a_{1}= a_{2t+1}$,
 if $V(P)\cap S=\{a_{1},a_{3},\ldots, a_{2t+1}\}$.

We call a collection $\mathcal P=\{P_{1},P_{2},\ldots,P_{h}\}$ of $(S,I)$-alternating paths in $G$ an {\sl $I$-cover} if the following conditions hold:
\begin{enumerate}[(A)]
	\item $V(P_{i})\cap V(P_{j})=\emptyset$ for any $i,j\in [h]$ and $i\neq j$; and

\item $I\subseteq \bigcup_{i=1}^{h}P_{i}^{o}$.
\end{enumerate}

For a collection $\setp=\{P_{1},P_{2},\ldots,P_{h}\}$ of $(S,I)$-alternating paths,
denote by $End(\mathcal P)$ the union of $End(P_i)$'s
over all path $P_i$ in $\setp$.
Let $Inn(\setp)=(\bigcup_{i=1}^{h}P_{i}^{o})\cap S$. For convenience, we also use $\mathcal P$ to denote the subgraph $\bigcup_{i=1}^{h}P_{i}$.

\begin{lem} {\rm (Liu et al.  \cite{1})} \label {lem9}
Let $G$ be a split graph with a split partition $(S, I)$ and an $I$-cover $\mathcal P$, and let $u, v$ be any two distinct vertices in
$V(G)\setminus Inn(\mathcal P)$. If $u, v$ are not the endpoints of an
$(S, I)$-alternating path in $\mathcal P$, then $G$ has a Hamiltonian
$(u,v)$-path.
\end{lem}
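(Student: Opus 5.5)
We prove Lemma~\ref{lem9} by an explicit construction: we start from the paths of $\mathcal P$, which already cover all of $I$, and link them through the clique $S$ into a single Hamiltonian $(u,v)$-path.

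\textbf{Step 1: reduce to the case $u,v\in S$.} A vertex $x\in I$ lies in the interior of some $P_i$, with two neighbours $a,b\in S$ on that path. Suppose $u\in I$ lies on $P_i=\ldots a\,u\,b\ldots$. Removing $u$ splits $P_i$ into two $(S,I)$-alternating pieces, one ending at $a$ and one ending at $b$. A piece consisting of a single vertex of $S$ is simply freed into the pool of uncovered clique vertices. We want the final path to start $u\,a\ldots$ or $u\,b\ldots$, so we keep one piece attached to $u$ and treat the other as an ordinary path. The case $v\in I$ is handled the same way. The hypothesis that $u$ and $v$ are not the two endpoints of a single path in $\mathcal P$ becomes relevant here. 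If $u,v\in I$ lie on the same $P_i$, we cut $P_i$ at both vertices. The middle segment between $u$ and $v$ must then be absorbed by whichever of $u$ or $v$ it attaches to, which forces an orientation. I will check that this case still leaves a consistent orientation.

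\textbf{Step 2: linear ordering of blocks.} Every vertex of $G$ now lies in exactly one of the following blocks:
\begin{itemize}
\item a path of $\mathcal P$ (or a piece from Step 1), whose endpoints are in $S$ and all of whose $I$-vertices are interior;
\item a single vertex of $S\setminus V(\mathcal P)$.
\end{itemize}
Each block has both ends in $S$, and $G[S]$ is complete, so the blocks can be concatenated in any order and with any orientation. The end of one block is always adjacent to the start of the next. We must therefore place the block containing $u$ first, oriented so that it starts at $u$, and the block containing $v$ last, oriented so that it ends at $v$. Since $u,v\notin Inn(\mathcal P)$, each of $u$ and $v$ is either an endpoint of its block (in $S$), a free $S$-vertex, or an $I$-vertex that Step 1 has made an endpoint.

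\textbf{Step 3: the obstacle.} The only way this fails is if $u$ and $v$ are the two ends of the same block, since that block would have to be both first and last. If other blocks exist, this is impossible. This is precisely the excluded configuration when $u,v\in S$, so it is ruled out by hypothesis. For $u$ or $v$ in $I$, the splitting in Step 1 has to be done so that this does not recreate the bad configuration. When $u\in I$ and $v$ is an endpoint of the same $P_i$, we attach to $u$ the piece \emph{not} containing $v$. That piece's other end lies in $S$ and differs from $v$, which is fine.

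\textbf{Step 4: concatenate.} Apart from the case analysis in Step 1, the remaining work is routine: we concatenate the blocks $u$-block, free blocks, $v$-block using clique edges. The result is a path that covers every vertex exactly once, which is the required Hamiltonian $(u,v)$-path in $G$.
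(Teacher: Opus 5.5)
Your construction is correct and is the standard argument for this lemma; the paper itself only cites Liu et al.\ for it. You split the paths of $\mathcal P$ at $u$ and/or $v$ when they lie in $I$, chain the resulting blocks and the vertices of $S\setminus V(\mathcal P)$ through edges of the clique $S$, and use the hypothesis only to exclude $u,v$ being the two ends of one block.

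The case you defer, $u,v\in I$ on the same path, is immediate and needs no absorption of the middle segment: attach to $u$ the outer piece on its side and to $v$ the outer piece on its side, and treat the middle segment (both ends in $S$) as an ordinary free block.
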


Let $G$ be a split graph with a split partition $(S,I)$.
We call a collection $\mathcal Q=\{P_{1},P_{2},\ldots,P_{h_{1}},C_{1},C_{2},\ldots,C_{h_{2}}\}$ of $(S,I)$-alternating subgraphs in $G$ a {\sl pseudo $I$-cover} if the following conditions hold:
\begin{enumerate}
\item[(C)] each $P_{i}$ is an $(S,I)$-alternating path and each $C_{j}$ is an $(S,I)$-alternating cycle; and

\item[(D)] $I\subseteq (\bigcup_{i=1}^{h_{1}}P_{i}^{o})\cup (\bigcup_{j=1}^{h_{2}}V(C_{j}))$.
\end{enumerate}

For any pseudo $I$-cover $\setq$,  let
$End(\mathcal Q)$ denote the collection of the endpoints of all alternating paths in $\setq$, %$\bigcup_{i=1}^{h_{1}}P_{i}$,
and let $Inn(\mathcal Q)=(\bigcup_{i=1}^{h_{1}}P_{i}^{o})\cap S$.
For convenience, we also use $\mathcal Q$ to denote the subgraph $(\bigcup_{i=1}^{h_{1}}P_{i})\cup (\bigcup_{j=1}^{h_{2}}C_{j})$.

\begin{lem} {\rm (Liu et al.  \cite{1})} \label {lem3}
Let $r\geq 2$ be an integer, and let $G$ be an $r$-connected $K_{1,r+1}$-free split graph with a split partition $(S,I)$. Then $G$ contains a pseudo $I$-cover.
\end{lem}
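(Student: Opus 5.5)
The statement immediately above is Lemma~\ref{lem3} of Liu et al. I would prove it by a maximality argument, building the pseudo $I$-cover greedily and using $K_{1,r+1}$-freeness to extend it whenever some vertex of $I$ is left uncovered.

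\textbf{Setup.} Every $v\in I$ has $N_G(v)\subseteq S$. Since $G$ is $r$-connected and $|S|\ge r+1$, each such $v$ satisfies $d(v)\ge r$. Among all collections $\mathcal Q$ of vertex-disjoint $(S,I)$-alternating paths and cycles, choose one covering the maximum number of vertices of $I$ in the sense of (D). For a cycle this means all of its vertices; for a path it means its interior. Subject to that, choose $\mathcal Q$ to minimize the number of vertices of $S$ it uses.

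\textbf{Extension step.} Suppose some $x\in I$ is not covered by $\mathcal Q$. Call a vertex of $S$ \emph{free} if it lies outside $\mathcal Q$ or is an endpoint of a path in $\mathcal Q$.
\begin{itemize}
\item If $x$ has two free neighbours $a,b$, I build a new alternating path through $x$. If $a,b$ are outside $\mathcal Q$, take the path $axb$. If $a$ is an endpoint of some $P$, extend $P$ by $xb$. If $a,b$ are endpoints of two different paths, merge them through $x$. If they are the two endpoints of the same path $P$, close $P$ into an alternating cycle through $x$; this is exactly why cycles are allowed in the definition.
\item Every one of these operations covers more of $I$, contradicting maximality. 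So $x$ has at most one free neighbour, and therefore at least $r-1$ neighbours that are interior $S$-vertices of paths or vertices of cycles in $\mathcal Q$.
\end{itemize}

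\textbf{Claw argument.} Let $s\in N(x)$ be a non-free vertex of $\mathcal Q$. Then $s$ has two $\mathcal Q$-neighbours $y,z\in I$ on either side. Consider the $r-1$ neighbours $s_1,\dots,s_{r-1}$ of $x$ of this kind, each with its two $I$-neighbours along $\mathcal Q$.

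I would show that some $s_i$ admits a swap. Suppose a successor $y_i$ of $s_i$ is adjacent to a free vertex, or to a vertex that allows rerouting. Then we replace the edge $s_iy_i$ by $s_ix$ and reconnect $y_i$ elsewhere, which covers strictly more of $I$ and contradicts maximality.

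If no swap is possible, the $I$-vertices adjacent to $s_i$ are restricted in their neighbourhoods. Pick $s=s_1$. Then $s$ together with $x$ and the $\mathcal Q$-successors of the $s_i$, chosen appropriately, forms an independent set of size $r+1$ in $N(s)$, so $G$ contains an induced $K_{1,r+1}$. This contradicts the hypothesis. Since these chosen vertices all lie in $I$, they are pairwise nonadjacent automatically; the only thing to check is that they are adjacent to $s$.

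\textbf{Main obstacle.} The hard step is organizing the rerouting so that the $r+1$ independent vertices are all adjacent to one common centre. To do this, I would use the $r$-connectivity via a Menger fan of $r$ disjoint paths from $x$ to distinct vertices of $S$. Combined with the clique structure of $S$, this lets any $S$-vertex serve as the centre. The remaining case analysis is on whether successors lie on cycles or on paths.
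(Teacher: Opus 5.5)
Your extension step is fine: an uncovered $x\in I$ has at most one free neighbour. The argument breaks down at the step you yourself call the main obstacle, and that step carries the whole proof.

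\textbf{The swap is not valid as stated.} Replacing $s_iy_i$ by $s_ix$ and re-attaching $y_i$ to a free vertex leaves $x$ with a single edge, so $x$ sits at the end of a path. Alternating paths must end in $S$, and $x$ is still not covered. To give $x$ a second edge you must take capacity from another neighbour $s_j$, which displaces another $I$-vertex, and so on. In general the re-routing you need is an alternating walk of unbounded length through several members of $\mathcal Q$, so no local case analysis settles it.

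\textbf{No single induced star gives the contradiction.} At a centre $s_1$ you only know three independent neighbours: $x$ and the two $\mathcal Q$-neighbours of $s_1$. The $\mathcal Q$-neighbours of $s_2,\ldots,s_{r-1}$ have no reason to be adjacent to $s_1$, and $s_1$ cannot be its own leaf. Neither a Menger fan nor the clique structure of $S$ helps, because an induced star contains at most one vertex of the clique $S$. Your local argument does work for $r=2$, since a non-free neighbour of $x$ already has three independent neighbours. It does not work for $r\ge 3$.

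\textbf{The missing idea} is to use $K_{1,r+1}$-freeness only through the bound $|N_G(a)\cap I|\le r$ for every $a\in S$ (the set $N_G(a)\cap I$ is independent). Combine it with $d_G(x)\ge r$ for $x\in I$ and argue globally:
\begin{enumerate}
\item Let $B$ be the bipartite graph of edges between $S$ and $I$. Delete edges so that every $x\in I$ keeps exactly $r$ of them.
\item The resulting graph has maximum degree $r$. By K\"onig's edge-colouring theorem it has a proper $r$-edge-colouring, and every $x\in I$ meets all $r$ colours.
\item Take the union of any two colour classes. In it every vertex of $I$ has degree exactly $2$ and every vertex of $S$ has degree at most $2$.
\item Its nontrivial components are vertex-disjoint alternating cycles and alternating paths. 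The path ends have degree $1$, so they lie in $S$. This is precisely a pseudo $I$-cover.
\end{enumerate}
Equivalently, by max-flow/min-cut such a subgraph exists iff $\sum_{a\in S}\min\{2,|N(a)\cap X|\}\ge 2|X|$ for all $X\subseteq I$. This follows from $\min\{2,d\}\ge 2d/r$ for $0\le d\le r$, which gives $\sum_{a\in S}\min\{2,|N(a)\cap X|\}\ge \frac{2}{r}e(X,S)\ge 2|X|$. This counting step is what your maximality argument would need to handle the long augmenting walks.
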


\section{Forbidden subgraphs 
}

Let $G$ be a split graph
with a split partition $(S,I)$.
The next two lemmas show that
if $G$ is   $\{K_{1,4},K_{1,4}+e\}$-free,
then $G$ does not contain
	any subgraph $H$
	with $|V(H)\cap S|=i+1$
	and $H\cong G_i$,
	where $1\le i\le 2$
	and
	$G_1$ are $G_2$ are the graphs shown in Fig.~\ref{fig2}.
These two conclusions
will be used in later proofs.

\begin{figure}[h!]
	\centering
	\includegraphics[width=12 cm]{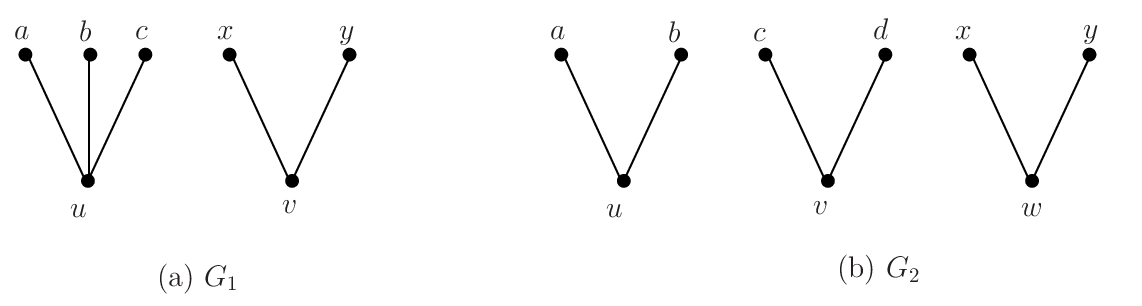}
	\caption{
$V(G_1)\cap S=\{u,v\}$
		and $V(G_2)\cap S=\{u,v,w\}$.
	}
	\label{fig2}
\end{figure}

For any two graphs $H_1$ and $H_2$, let $H_1\cup_0 H_2$
denote the vertex-disjoint union of  $H_1$ and $H_2$.
For any integer $k$, let $k H_1$
denote the vertex-disjoint union of $k$ copies of $H_1$.

\begin{lem}  \label {lem5}
	Let $G$ be a  $\{K_{1,4},K_{1,4}+e\}$-free split graph with a split partition $(S,I)$.
	Then, $G$ does not have
	a subgraph $H$
	such that $|V(H)\cap S|=2$
	and $H\cong K_{1,3}\cup_0 K_{1,2}$.
\end{lem}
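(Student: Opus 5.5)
The plan is to pin down where the vertices of $H$ lie with respect to the split partition, and then find an induced $K_{1,4}$ or $K_{1,4}+e$ in $G$ centred at one of the two vertices of $V(H)\cap S$.

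\textbf{Locating the vertices of $H$.} Suppose for contradiction that such an $H$ exists. Since $I$ is independent, every edge of $H$ has at least one endpoint in $S$.
\begin{enumerate}[(i)]
\item The component $K_{1,2}$ has edges, so it contains a vertex of $S$.
\item Suppose the centre of the component $K_{1,3}$ were in $I$. Then all three of its leaves would be in $S$, giving at least four vertices of $S$ in $H$, a contradiction.
\item So the centre $u$ of $K_{1,3}$ is in $S$. Then $u$ and the vertex from (i) are the two vertices of $V(H)\cap S$, which forces the leaves $x_1,x_2,x_3$ of $K_{1,3}$ into $I$.
\item Likewise, let $v$ be the vertex of $K_{1,2}$ in $S$. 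Its neighbours in $K_{1,2}$ lie in $I$, so $v$ must be the centre, and its two leaves $y_1,y_2$ are in $I$.
\end{enumerate}
Thus $u,v\in S$, and $x_1,x_2,x_3,y_1,y_2$ are five distinct vertices of $I$, with $ux_i\in E(G)$ and $vy_j\in E(G)$. Since $S$ is a clique, $uv\in E(G)$.

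\textbf{Case analysis on $|N_G(v)\cap\{x_1,x_2,x_3\}|$.} Consider $G[\{u,x_1,x_2,x_3,v\}]$. Here $u$ is adjacent to all four other vertices, and the $x_i$ are pairwise non-adjacent because $I$ is independent. So the only edges that can appear besides the star at $u$ are edges $vx_i$.
\begin{itemize}
\item If $v$ has no neighbour among the $x_i$, this subgraph is an induced $K_{1,4}$.
\item If $v$ has exactly one neighbour among the $x_i$, it is an induced $K_{1,4}+e$.
\item If $v$ is adjacent to at least two of them, say $x_1,x_2$, switch the centre to $v$. The vertices $x_1,x_2,y_1,y_2$ are four distinct vertices of the independent set $I$, all adjacent to $v$. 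Hence $G[\{v,x_1,x_2,y_1,y_2\}]$ is an induced $K_{1,4}$.
\end{itemize}
Each case contradicts the assumption that $G$ is $\{K_{1,4},K_{1,4}+e\}$-free, which proves the lemma.

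\textbf{Main obstacle.} The only delicate point is the first step: justifying that $H$ is forced into the configuration above, with both centres in $S$ and all five leaves in $I$. The essential observation is that $I$ is independent, so each component with edges must use a vertex of $S$. After that, the case analysis is immediate; no earlier lemma is needed beyond the definition of a split partition.
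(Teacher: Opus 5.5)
Your proof is correct and follows the same route as the paper: use $uv\in E(G)$ and split on $N_G(v)\cap\{x_1,x_2,x_3\}$ to find an induced $K_{1,4}$ or $K_{1,4}+e$ inside $G[V(H)]$. You also supply two details the paper leaves implicit: why both centres lie in $S$ and all five leaves in $I$, and that when $v$ has at least two neighbours among the $x_i$ one must recentre at $v$ to get the induced $K_{1,4}$ on $\{v,x_1,x_2,y_1,y_2\}$.
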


\proof Suppose that $G$ contains
an induced subgraph $H$
such that $|V(H)\cap S|=2$
and $H\cong K_{1,3}\cup_0 K_{1,2}$.
We may assume that
$V(H)=\{u,v, a,b,c,x,y\}$,
with $V(H)\cap S=\{u,v\}$,
and
$E(H)=\{ua,ub,uc, vx,vy\}$,
as shown in Fig.~\ref{fig2} (a).

Since $u,v\in S$, we have $uv\in E(G)$.
Then, no matter whether
$N(v)\cap \{a,b,c\}=\emptyset$
or $N(v)\cap \{a,b,c\}\ne \emptyset$,
$G[V(H)]$ contains an induced
subgraph
which is isomorphic to
$K_{1,4}$ or $K_{1,4}+e$,
a contradiction to the assumption.

Hence the result holds.
\proofend

\begin{lem}  \label {lem6}
	Let $G$ be a  $\{K_{1,4},K_{1,4}+e\}$-free split graph with a split partition $(S,I)$.
	Then, $G$ does not have
	a subgraph $H$
	such that $|V(H)\cap S|=3$
	and $H\cong 3 K_{1,2}$.
\end{lem}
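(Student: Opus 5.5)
We argue by contradiction, in the same spirit as the proof of Lemma~\ref{lem5}. Suppose $G$ has a subgraph $H\cong 3K_{1,2}$ with $|V(H)\cap S|=3$. Since $I$ is independent, every edge of $H$ has an end in $S$. Each component $K_{1,2}$ of $H$ is a path $xuy$; if its center lay in $I$, both leaves would lie in $S$. Three components with three $S$-vertices in total therefore force each component to contain exactly one vertex of $S$, and that vertex must be the center. So we may write $V(H)\cap S=\{u,v,w\}$ and
\[
E(H)=\{ua_1,ua_2,\ vb_1,vb_2,\ wc_1,wc_2\},
\]
with $a_1,a_2,b_1,b_2,c_1,c_2\in I$ pairwise distinct, as in Fig.~\ref{fig2}(b). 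Moreover $uv,vw,uw\in E(G)$ because $S$ is a clique, and $I$ is independent.

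Now look at the star centered at $u$. Its candidate leaves are $a_1,a_2$ together with $v$ and $w$. The vertices $a_1,a_2$ are nonadjacent, and neither is adjacent to $v$ or $w$ unless $G$ has extra edges from $\{a_1,a_2\}$ to $\{v,w\}$; the only edge among $\{a_1,a_2,v,w\}$ that is guaranteed is $vw$. We split into cases according to these possible extra edges between $I$-vertices of one component and the centers of the others.

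\textbf{Case 1.} $a_1,a_2$ have no neighbours in $\{v,w\}$. Then $G[\{u,a_1,a_2,v,w\}]$ is $K_{1,4}$ with the single added edge $vw$, i.e.\ $K_{1,4}+e$, a contradiction.

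\textbf{Case 2.} Some $a_i$ is adjacent to $v$ or $w$. Then we switch to another center. For example, replace $w$ by $c_1$ as a leaf of $u$: we need $uc_1\notin E(G)$, and if that fails we use the extra adjacency $uc_1$ instead. To organise this, consider the bipartite "cross" adjacencies between $\{u,v,w\}$ and the six $I$-vertices of the other components.

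\textbf{Main obstacle.} The hardest step is the bookkeeping in Case~2. My plan is to pick, for each center, a leaf set of four vertices consisting of two $I$-vertices and two further vertices, chosen so that at most one edge lies among the leaves. Take center $u$ and leaves $a_1,a_2,b_j,c_k$. Among these leaves there are no edges at all, since all four lie in $I$. So if $u$ is nonadjacent to some $b_j$ and some $c_k$, we get an induced $K_{1,4}$ immediately. Otherwise $u$ is adjacent to both $b_1,b_2$ or to both $c_1,c_2$. Say $u$ is adjacent to both $c_1,c_2$; then $u$ has the four $I$-neighbours $a_1,a_2,c_1,c_2$, and $G[\{u,a_1,a_2,c_1,c_2\}]$ is an induced $K_{1,4}$, again a contradiction.

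This shows that $u$ always yields an induced $K_{1,4}$, so the argument in fact closes in all cases. It also means Case~1 is redundant, since the simpler observation suffices: $u$ together with $a_1,a_2$ and one neighbour chosen from each of the other components, or else two such neighbours from a single component, gives four independent neighbours of $u$. With that, the proof is complete.
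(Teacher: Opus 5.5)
\textbf{There is a genuine gap}, and it sits in the step you call the main obstacle.

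First, the adjacency is reversed. $b_j$ and $c_k$ can serve as leaves of a star centred at $u$ only if $u$ is \emph{adjacent} to them. The same slip occurs in your Case~2, where you say you ``need $uc_1\notin E(G)$''.

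More seriously, even after this correction your dichotomy only covers the situation where $u$ has at least two neighbours in $\{b_1,b_2,c_1,c_2\}$. The cases where $u$ has no such neighbour, or exactly one, are never handled. Your conclusion that ``$u$ always yields an induced $K_{1,4}$'' is in fact false. Take $G$ to be $H$ together with the triangle $uvw$ and nothing else. This is a split graph with split partition $(\{u,v,w\},I)$ containing your $H$, and it is $K_{1,4}$-free. Indeed, every vertex of $I$ has degree $1$, and each of $u,v,w$ has degree $4$ with a neighbourhood containing an edge of the triangle. Its only forbidden induced subgraph is the $K_{1,4}+e$ on $\{u,a_1,a_2,v,w\}$. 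So your Case~1 is not redundant; it is the essential case. The unfinished Case~2 is exactly what has to be settled before Case~1 can be invoked.

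The missing idea is to rule out all ``cross'' adjacencies first, which is what Lemma~\ref{lem5} is for, and how the paper proceeds. Suppose, say, $v\sim a_1$. Then $v$ with $a_1,b_1,b_2$ and $w$ with $c_1,c_2$ form a copy of $K_{1,3}\cup_0 K_{1,2}$ with exactly two vertices in $S$, which Lemma~\ref{lem5} forbids. You can also see this directly:
\begin{itemize}
\item If $w$ has no neighbour in $\{a_1,b_1,b_2\}$, then $G[\{v,a_1,b_1,b_2,w\}]\cong K_{1,4}$.
\item If $w\sim x\in\{a_1,b_1,b_2\}$ and $v$ is adjacent to some $c_i$, then $v$ has four independent neighbours $a_1,b_1,b_2,c_i$.
\item If $w\sim x\in\{a_1,b_1,b_2\}$ and $v$ is adjacent to neither $c_i$, then $G[\{w,x,c_1,c_2,v\}]\cong K_{1,4}+e$.
\end{itemize}
By symmetry, no centre is adjacent to an $I$-vertex of another component. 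Hence $a_1,a_2$ have no neighbours in $\{v,w\}$, and your Case~1 gives the induced $K_{1,4}+e$ on $\{u,a_1,a_2,v,w\}$.

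Your preliminary argument that each component of $H$ has exactly one $S$-vertex, namely its centre, is correct. It fills in a point the paper leaves to the figure.
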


\proof Suppose that $G$ contains
an induced subgraph $H$
such that $|V(H)\cap S|=3$
and $H\cong  3K_{1,2}$.
We may assume that
$V(H)=\{u,v, w, a,b,c,d, x,y\}$,
with $V(H)\cap S=\{u,v,w\}$,
and
$E(H)=\{ua,ub,vc, vd, wx,wy\}$,
as shown in Fig.~\ref{fig2} (b).

By Lemma~\ref{lem5},
we have $N(u)\cap \{c,d,x,y\}=\emptyset$.
Similarly, $N(v)\cap \{a,b,x,y\}=\emptyset$
and
$N(w)\cap \{a,b,c,d\}=\emptyset$.

Since $u,v,w\in S$, we have
$G[\{u,v,w\}]\cong K_3$.
Thus, $G[V(H)]$ has an induced
subgraph isomorphic to $K_{1,4}+e$,
a contradiction to the assumption.

Hence the result holds.
\proofend

\section{Existence of an $I$-cover}

Let $G$ be a split graph
with a split partition $(S,I)$.
By Lemma \ref{lem3}, $G$ has a pseudo $I$-cover
$\mathcal Q=\{P_{1},\ldots,P_{h_{1}},C_{1},\ldots,C_{h_{2}}\}$.
By Lemma \ref{lem6}, the number of $(S,I)$-alternating cycles
in $\setq$
is at most two, i.e., $h_{2}\leq2$.
In the section, we will further show that if $|V(G)|\ge 9$,
such a pseudo $I$-cover $Q$
with $h_{2}=0$ exists,
i.e.,   an  $I$-cover exists.

\begin{lem}  \label {lem7}
Let $G$ be a $3$-connected $\{K_{1,4},K_{1,4}+e\}$-free split graph with $|V(G)|\ge 9$
and a split partition $(S,I)$.
Then $G$ contains an  $I$-cover.
\end{lem}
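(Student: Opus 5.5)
We start from Lemma~\ref{lem3} with $r=3$. A $K_{1,4}$-free graph is in particular $K_{1,4}$-free, so $G$ has a pseudo $I$-cover. Among all pseudo $I$-covers of $G$, choose one, $\mathcal Q=\{P_1,\ldots,P_{h_1},C_1,\ldots,C_{h_2}\}$, with $h_2$ minimum. Subject to that, choose it so that the total number of vertices of $I$ lying on cycles is minimum. By Lemma~\ref{lem6}, $h_2\le 2$: three disjoint alternating cycles would contain three vertex-disjoint copies of $K_{1,2}$ centred at three $S$-vertices. We suppose $h_2\ge 1$ and aim for a contradiction by converting a cycle into a path while keeping every vertex of $I$ covered.

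The basic conversion is local. Take a cycle $C=a_1a_2\cdots a_{2t}a_1$ with $a_1\in S$. Deleting an edge $a_1a_{2t}$ leaves an $(S,I)$-alternating path from $a_1$ to $a_{2t}$, where $a_{2t}\in I$; this fails only because it does not end in $S$. We try to repair it in three ways.

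\begin{enumerate}[(i)]
\item Find a vertex $x\in S\setminus V(\mathcal Q)$ with $x a_{2t}\in E(G)$, and append it, giving the path $a_1a_2\cdots a_{2t}x$.
\item Find an $S$-vertex $x$ that is an endpoint of some path $P_i$ with $xa_{2t}\in E(G)$, and splice $C$ onto $P_i$ through $a_{2t}$.
\item Reroute through an inner $S$-vertex of some path or cycle, exchanging a segment.
\end{enumerate}

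Each success lowers $h_2$. A two-vertex cycle $a_1a_2a_1$ is not a cycle, so every cycle has $t\ge 2$. Hence every cycle contains at least two $S$-vertices and at least two $I$-vertices, and every $I$-vertex on $C$ has two $S$-neighbours on $C$.

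We now exploit that all repairs fail. For every $I$-vertex $y$ on $C$ and every choice of which cycle edge to remove, all neighbours of $y$ lie among the interior $S$-vertices of $\mathcal Q$ or in cycles. Since $\kappa(G)\ge 3$, each $y\in I$ has $d(y)\ge3$, so $y$ has a neighbour $z\in S$ outside its two cycle neighbours. We analyse where $z$ lies.

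\begin{enumerate}[(a)]
\item If $z$ lies on another cycle or on a path, we merge the two structures by a standard exchange. For instance, if $z=b_j$ is an $S$-vertex of path $P=b_1\cdots b_m$, then $b_1\cdots b_j\,y\,(C\text{ traversed})\,y^-$ continued appropriately reduces the cycle count. Some parity and case care is needed here.
\item Otherwise, after the exchanges, the $S$-vertices of $C$ together with other $S$-vertices each have two private $I$-neighbours forming $K_{1,2}$'s. This yields a configuration excluded by Lemma~\ref{lem5} (a $K_{1,3}\cup_0K_{1,2}$ on two $S$-vertices) or by Lemma~\ref{lem6}.
\end{enumerate}

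The connectivity is used once more. The set $S\cap V(C)$, if it were small, would be a cut separating $I\cap V(C)$ from the rest. Since $|V(G)|\ge 9$ and $s\ge 4$, there are enough vertices outside $C$ to supply the extra neighbours needed to build the forbidden $3K_{1,2}$ or $K_{1,3}\cup_0K_{1,2}$. Note also that if $\mathcal Q$ has no paths, we can break a cycle and start a path from an unused $S$-vertex, which exists because $S$ is a maximum clique with $s\ge 4$.

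The main obstacle is the case analysis in step (a) when $z$ is an interior $S$-vertex of a path, or when $h_2=2$ and the cycles are interlinked only through interior vertices. There the naive splice destroys alternation or uncovers an $I$-vertex. I would first try choosing $\mathcal Q$ extremal, for example with minimum total length, so that any chord from $C$ to an interior path vertex gives either a shorter pseudo cover or the forbidden configuration of Lemma~\ref{lem5} on the two $S$-vertices involved. I would use the bound $|V(G)|\ge 9$ only to rule out the small tight configuration, such as two 4-cycles sharing no vertex, that could occur in $K_{1,4}$-free but not Hamilton-connected examples.
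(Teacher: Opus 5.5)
\textbf{There is a genuine gap: the proposal stops exactly where the work begins.}

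\emph{What works.} Repairs (i) and (ii) are correct. They show that an $I$-vertex on a cycle has no neighbour in $S\setminus V(\mathcal Q)$ or in $End(\mathcal Q)$.

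\emph{What fails.} In case (a) no exchange actually works.
\begin{enumerate}
\item Suppose the third neighbour $z$ of a cycle vertex $y\in I$ is an interior $S$-vertex $b_j$ of a path $b_1\cdots b_m$. Your splice $b_1\cdots b_j\,y\cdots y^-$ leaves $b_{j+1}\cdots b_m$ starting at an $I$-vertex, and there is no free $S$-vertex to attach to it.
\item If $z$ is an $S$-vertex on a cycle, a single chord cannot turn cycles into a path at all.
\end{enumerate}
Case (b) names no concrete configuration, and the extremal choice in your last paragraph is only a hope.

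The remark that an unused $S$-vertex exists ``because $S$ is a maximum clique with $s\ge 4$'' is also unjustified. For the extremal $\mathcal Q$ one has $S\subseteq V(\mathcal Q)$. For instance, suppose paths exist, $z\in S\setminus V(\mathcal Q)$ and $u$ is a path endpoint. Take any cycle $S$-vertex $a$ with cycle neighbours $b,b'$. Then $\{a,b,b',z,u\}$ induces $K_{1,4}+e$, since $z,u$ miss $b,b'$ by (i) and (ii). Consequently, in the path-free case $V(G)$ is exactly the vertex set of the cycles, and there is nothing to start a new path from.

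\textbf{The missing idea.} The bad chords are not repaired but \emph{excluded} by Lemmas~\ref{lem5} and~\ref{lem6}, with $|V(G)|\ge 9$ disposing of what remains.
\begin{enumerate}
\item \emph{Paths exist: no chords between cycles.} Let $xy$ be a chord from a cycle $I$-vertex $x$ to an $S$-vertex $y$ of a cycle. Let $x',x''$ be the cycle neighbours of $y$, and let $u$ be a path endpoint, which is non-adjacent to $x,x',x''$ by (ii). Then $\{y,x,x',x'',u\}$ induces $K_{1,4}$. So, since $\delta(G)\ge 3$, every cycle $I$-vertex has a neighbour in some $P_j^{o}$.
\item \emph{$h_2=2$.} Lemma~\ref{lem6} forbids any path with an interior $S$-vertex, so by the previous step there are no paths. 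Then each cycle has at most three $S$-vertices. A cycle with three $S$-vertices forces a chord yielding $K_{1,3}\cup_0K_{1,2}$. Two cycles with two $S$-vertices each give $|V(G)|=8$; this is the tight configuration you guessed.
\item \emph{$h_2=1$, paths exist.} Write $C=a_1b_1\cdots a_kb_ka_1$.
 If $k\ge 3$, a neighbour $y\in P_j^{o}$ of $b_1$ together with $a_3$ gives $K_{1,3}\cup_0K_{1,2}$.
 If $k=2$, take neighbours $y_1,y_2$ in path interiors of $b_1,b_2$; they can be chosen distinct by $K_{1,4}$-freeness, and again give $K_{1,3}\cup_0K_{1,2}$.
\item \emph{$h_2=1$, no paths.} Then $V(G)=V(C)$, so $|V(G)|\ge 9$ and Lemma~\ref{lem6} force $k=5$. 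The forced chords $a_1b_3$ and $a_2b_4$ then give $K_{1,3}\cup_0K_{1,2}$ on $\{a_1,a_2\}$.
\end{enumerate}
None of these exclusion steps appears in your sketch, and your exchange strategy cannot replace them.
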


%By Lemma \ref{lem3}, $G$ has a pseudo $I$-cover.

%\noindent \textbf{Proof.}
\proof
The proof is by contradiction.  Suppose  that $G$ has no $I$-cover.   Let
$\mathcal Q=\{P_{1},\ldots,P_{h_{1}},C_{1},\ldots,C_{h_{2}}\}$ be a pseudo $I$-cover such that
\begin{enumerate}[(1)]
\item $h_{2}$ is minimized; and
\item subject to (1), $h_{1}$ is minimized.
\end{enumerate}

Let $\setq_1
=\{P_{1},\ldots,P_{h_{1}}\}$
and $\setq_2
=\{C_{1},\ldots,C_{h_{2}}\}$.
By the assumption on $G$, $h_2\ge 1$. Then, by Lemma~\ref{lem6},
we have $h_{2}\leq 2$;
otherwise, $G$ contains a
subgraph $H$ with the properties that
$|V(H)\cap S|=3$ and $H\cong 3K_{1,2}$,
contradicting  Lemma~\ref{lem6}.

Let $S_{1}=S \cap V(\mathcal Q)$ and $S_{2}=S\setminus S_{1}$.

We first prove the following claim.

\setcounter{countclaim}{0}

\begin{cl} \rm \label {cl1}
For any $1\le i\le h_2$ and
$x\in I\cap V(C_{i})$, $N_{G}(x)\cap (S_{2}\cup End(\mathcal Q))=\emptyset$. So, $N_{G}(x)\subseteq S_{1}\setminus End(\mathcal Q).$
\end{cl}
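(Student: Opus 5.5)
Fix $i$ and $x\in I\cap V(C_i)$. We argue by contradiction: if $x$ has a neighbour $y\in S_2\cup End(\mathcal Q)$, we modify $\mathcal Q$ to obtain a pseudo $I$-cover with fewer cycles, or with the same number of cycles and fewer paths. Either outcome contradicts the choice of $\mathcal Q$. Write $C_i=a_1a_2\ldots a_{2t+1}$ with $a_1=a_{2t+1}$, and relabel so that $x=a_2$. Then $a_1,a_3\in S$ are the two neighbours of $x$ on $C_i$.

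\emph{Case 1: $y\in S_2$.} Then $y$ lies on no member of $\mathcal Q$. Delete the edge $xa_1$ from $C_i$ and add the edge $xy$. This gives the path
\[
P=y\,x\,a_3a_4\ldots a_{2t}a_{2t+1},
\]
whose endpoints are $y$ and $a_{2t+1}=a_1$, both in $S$. The vertices of $P$ alternate between $S$ and $I$, its endpoints are distinct, and its interior contains every vertex of $I\cap V(C_i)$, including $x$. Replace $C_i$ by $P$. The family is still a collection of vertex-disjoint alternating subgraphs, so it is a pseudo $I$-cover, and it has $h_2-1$ cycles. This contradicts (1).

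\emph{Case 2: $y\in End(\mathcal Q)$.} Then $y$ is an endpoint of some path $P_j$, say $P_j=y\ldots z$. Open $C_i$ at $x$ as in Case 1 and attach the result to $P_j$ through the edge $xy$:
\[
P' = z\ldots y\,x\,a_3a_4\ldots a_{2t+1}.
\]
Its endpoints $z$ and $a_{2t+1}$ lie in $S$ and are distinct, since $C_i$ is disjoint from $P_j$. Alternation holds because $y\in S$ is now an interior vertex lying between $P_j$'s last $I$-vertex and $x$. All $I$-vertices of $P_j$ and of $C_i$ are in the interior of $P'$. Replacing $P_j$ and $C_i$ by $P'$ decreases $h_2$, again contradicting (1).

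Hence $N_G(x)\cap(S_2\cup End(\mathcal Q))=\emptyset$. Since $N_G(x)\subseteq S=S_1\cup S_2$, it follows that $N_G(x)\subseteq S_1\setminus End(\mathcal Q)$.

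The only delicate point is checking that the new path meets the definition of an $(S,I)$-alternating path, in particular that its endpoints are distinct. In Case 1 this holds because $y\notin V(\mathcal Q)$. In Case 2 it holds because $V(C_i)\cap V(P_j)=\emptyset$. One might worry that opening the cycle leaves some vertex of $I\cap V(C_i)$ as an endpoint, but both new endpoints are in $S$, so no $I$-vertex becomes an endpoint.
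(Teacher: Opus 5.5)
Your proof is correct and matches the paper's argument. In both, $C_i$ is opened at $x$ through the edge $xy$: this gives a path on $V(C_i)\cup\{y\}$ when $y\in S_2$, or a path on $V(C_i)\cup V(P_j)$ spliced onto $P_j$ when $y\in End(P_j)$, and either way the minimality of $h_2$ is contradicted. Your explicit checks of alternation and of distinct endpoints rely on the members of $\mathcal Q$ being vertex-disjoint. The paper assumes this implicitly too, although its stated definition of a pseudo $I$-cover omits it.
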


\begin{figure}[h!]
	\centering
	\includegraphics[width=13cm]
	{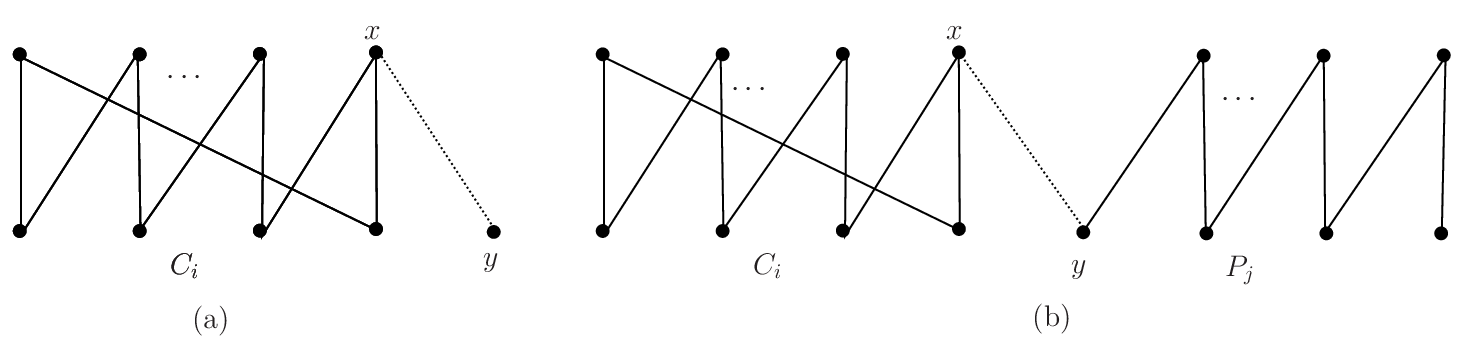}
	\caption{For the proof Claim~\ref{cl1}.
	}
	\label{lem4-1-f1}
\end{figure}

\clproof If
$y\in N(x)\cap S_2$,
then  $C_i$ can be replaced by
a new alternating path $P'$
with the property that  $V(P')=V(C_i)\cup \{y\}$,
as shown in Fig.~\ref{lem4-1-f1} (a),
and  a new pseudo $I$-cover
$\setq'$ can be obtained by
replacing $C_i$ by $P'$,
a contradiction to
the minimality of $h_2$.

Similarly, if
$y\in N(x)\cap End(P_j)$,
where $1\le j\le h_1$,
then  $C_i$ and $P_j$
can be replaced by
a new alternating path $P'$
with the property that  $V(P')=V(C_i)\cup V(P_j)$,
as shown in Fig.~\ref{lem4-1-f1} (b),
and  a new pseudo $I$-cover
$\setq'$ can be obtained by
replacing $C_i$ and $P_j$
by $P'$,
a contradiction to
the minimality of $h_2$.

Hence Claim~\ref{cl1} holds.
\clproofend

Applying the conclusion of
Claim~\ref{cl1},
the following claim can be proved similarly as Claim 2 of Lemma 4.2
in \cite{1}.

\begin{cl}
	\label {cl2}
  $S_{2}=\emptyset$.
\end{cl}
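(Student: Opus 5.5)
The plan is to argue by contradiction, following the exchange scheme of Claim 2 of Lemma 4.2 in \cite{1} and replacing the $K_{1,r+1}$-freeness used there by Claim~\ref{cl1} together with Lemmas~\ref{lem5} and~\ref{lem6}. Suppose $y\in S_2$. Fix the cycle $C_1=s_1x_1s_2x_2\ldots s_kx_ks_1$, with $s_j\in S$ and $x_j\in I$. Since $C_1$ is an $(S,I)$-alternating cycle, $k\ge 2$. By Claim~\ref{cl1}, every $x_j$ has all its neighbours in $S_1\setminus End(\mathcal Q)$. In particular $yx_j\notin E(G)$ for all $j$, and no $x_j$ is adjacent to an endpoint of a path of $\setq_1$. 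The goal is to use $y$ to break $C_1$ open, or to merge it with another member of $\mathcal Q$. Either outcome yields a pseudo $I$-cover with fewer cycles, contradicting choice~(1), or one with the same number of cycles and fewer paths, contradicting choice~(2).

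The first step concerns the $I$-neighbours of $y$. Since $\mathcal Q$ is a pseudo $I$-cover and, by Claim~\ref{cl1}, vertices of $I$ on cycles have no neighbours in $S_2$, every $z\in N(y)\cap I$ lies in $P_j^{o}$ for some path $P_j=a\ldots s z s'\ldots b$. I would then do the following local surgery. Split $P_j$ at $z$ into $a\ldots sz y$ and $s'\ldots b$. The new endpoint $s'$ must then be usable: if some $x_i$ on $C_1$ were adjacent to $s'$, we could open $C_1$ at $x_i$ and attach it to $s'\ldots b$, getting a contradiction with~(1). So Claim~\ref{cl1}, applied to the modified cover, forces strong non-adjacencies. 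Iterating over the different $I$-neighbours of $y$ should give many vertices of $S_1$ that carry two private $I$-neighbours each.

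The second step is to produce a forbidden configuration. Each $s_j$ on $C_1$ carries the $K_{1,2}$ formed by $x_{j-1}s_jx_j$. Combined with the structures found in the first step, we look for a copy of $K_{1,3}\cup_0K_{1,2}$ with exactly two vertices in $S$, contradicting Lemma~\ref{lem5}, or a copy of $3K_{1,2}$ with three vertices in $S$, contradicting Lemma~\ref{lem6}. A natural candidate is $s_1$ with leaves $x_k,x_1$ and a third $I$-neighbour, together with $y$ and two of its $I$-neighbours. If $y$ has at most one $I$-neighbour, then I would use $3$-connectivity instead. The set $S\cap V(C_1)$ separates $\{x_1,\dots,x_k\}$ from the rest of the graph, because each $x_j$ sends edges only into $S_1\setminus End(\mathcal Q)$. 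I would show that at most two vertices of $S$ can serve as attachments for the $x_j$'s without creating $3K_{1,2}$ or $K_{1,3}\cup_0K_{1,2}$. This would give a separating set of size at most $2$, contradicting $\kappa(G)\ge 3$ (here $|V(G)|\ge 9$ rules out small exceptional configurations).

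The main obstacle is the first step: controlling how $y$'s $I$-neighbours sit inside the paths of $\setq_1$. A single re-routing may simply move the problem elsewhere without decreasing $h_2$ or $h_1$. I would handle it by choosing, among all pseudo $I$-covers satisfying (1) and (2), one that additionally minimises $\sum_j |V(P_j)|$ or maximises $|S_2|$. This guarantees that each split-and-reattach operation either strictly improves the cover or exposes the two or three disjoint stars needed for Lemma~\ref{lem5} or Lemma~\ref{lem6}.
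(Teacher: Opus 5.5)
There is a genuine gap: neither of your two steps reaches a contradiction. Your first step is sound as far as it goes. If $z\in N(y)\cap I$ lies between $s$ and $s'$ on some $P_j$, then splitting $P_j$ at $z$ and hanging $y$ on it shows, via the minimality of $h_2$, that no $I$-vertex of a cycle is adjacent to $s$ or $s'$. But ``iterating should give many vertices with two private $I$-neighbours'' is not an argument. Your Lemma~\ref{lem5} candidate also needs $s_1$ to have a third $I$-neighbour, and nothing provides one.

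The fallback for $|N(y)\cap I|\le 1$ is wrong. Claim~\ref{cl1} only confines $N(x_j)$ to $S_1\setminus End(\mathcal Q)$, which contains $Inn(\mathcal Q)$ and the $S$-vertices of a possible second cycle. So $S\cap V(C_1)$ need not separate the $x_j$'s from anything. Moreover, for $k\ge 3$ the vertices $s_1,\dots,s_k$ are already $k\ge 3$ attachment vertices, so no separator of size at most $2$ can come out of this. Adding further tie-breaks (minimising $\sum_j|V(P_j)|$ or maximising $|S_2|$) does not supply the missing step.

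The paper itself gives no details here beyond citing Claim~2 of Lemma~4.2 in \cite{1} together with Claim~\ref{cl1}. The idea you are missing is to use $y$ as a \emph{leaf} of a star centred at the third neighbour of a cycle vertex.

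\begin{itemize}
\item Because $y\in S$, it is adjacent to every vertex of $S\setminus\{y\}$. By Claim~\ref{cl1}, it has no neighbour among the $I$-vertices of cycles.
\item Since $d_G(x_1)\ge 3$, the vertex $x_1$ has a neighbour $w\notin\{s_1,s_2\}$. By Claim~\ref{cl1}, $w\in S_1\setminus End(\mathcal Q)$, so $w$ lies on a cycle or in $Inn(\mathcal Q)$. Its two neighbours $p,q$ on that cycle or path are $I$-vertices different from $x_1$.
\item Then $G[\{w,x_1,p,q,y\}]$ is $K_{1,4}$ or $K_{1,4}+e$ unless $yp,yq\in E(G)$.
\item If $w$ is on a cycle (this includes $w=s_j$ with $3\le j\le k$), Claim~\ref{cl1} gives $yp,yq\notin E(G)$, a contradiction.
\item Otherwise $w$ is interior to some $P_j$ and $yp,yq\in E(G)$. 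Replace $w$ by $y$ on $P_j$, and turn $C_1$ into the alternating path $wx_1s_2x_2\ldots s_kx_ks_1$. This gives a pseudo $I$-cover with fewer cycles, contradicting~(1).
\end{itemize}

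Your own first-step observation even excludes $yp\in E(G)$ directly, since $w$ is a path-neighbour of $p$ and $wx_1\in E(G)$. The argument uses only $\delta(G)\ge 3$, Claim~\ref{cl1} and the two forbidden graphs. It needs no case split on $|N(y)\cap I|$ and no separator.
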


\begin{cl} \rm \label {cl3-0}
If $\setq_1\ne \emptyset$, then
for each $x\in I\cap V(\setq_2)$,
	$|N_G(x)\cap (S\cap V(\setq_2))|=2$,
	implying that
	$N_G(x)\cap (P^o_j\cap S)\ne \emptyset$ for some $P_j\in \setq_1$.
\end{cl}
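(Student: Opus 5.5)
The lower bound is immediate: $x$ lies on some cycle $C_i\in\setq_2$, so its two cycle-neighbours in $C_i$ lie in $S\cap V(\setq_2)$. Hence the content of Claim~\ref{cl3-0} is the upper bound, $|N_G(x)\cap (S\cap V(\setq_2))|\le 2$. The ``implying'' part then follows from what we already have. By Claims~\ref{cl1} and~\ref{cl2}, $N_G(x)\subseteq S_1\setminus End(\setq)$, and $S_1\setminus End(\setq)=(S\cap V(\setq_2))\cup Inn(\setq)$. Since $G$ is $3$-connected, $d_G(x)\ge 3$. If only two neighbours of $x$ lie in $S\cap V(\setq_2)$, some neighbour lies in $Inn(\setq)$, that is, in $P_j^o\cap S$ for some $P_j\in\setq_1$.

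For the upper bound I argue by contradiction. Write $C_i=a\,x\,b\cdots a$ and suppose $x$ has a third neighbour $y\in S\cap V(\setq_2)$. There are two cases.

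\emph{Case 1: $y\in V(C_i)$, so $xy$ is a chord.} I will try to reroute $C_i$ through the chord into a shorter alternating cycle plus an alternating path. I would then merge that path with some $P_j\in\setq_1$ via the clique edge between $S$-endpoints, obtaining a pseudo $I$-cover with fewer cycles and contradicting condition (1). The difficulty is that opening an alternating cycle at an $S$-vertex leaves an $I$-vertex as an endpoint. So the rerouting must use $x$ itself as the vertex that is ``dropped into'' the path, so that both endpoints remain in $S$.

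\emph{Case 2: $y\in V(C_k)$ with $k\ne i$.} I would first try to splice $C_i$ and $C_k$ into a single alternating cycle through the edge $xy$, giving a pseudo $I$-cover with smaller $h_2$. I expect the parity problem to recur here as well.

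I expect this rerouting to be the main obstacle. Whenever no rerouting is available, I plan instead to produce a forbidden configuration using the hypothesis $\setq_1\ne\emptyset$. Choose $P_j\in\setq_1$, an endpoint $e\in End(P_j)$ and an $I$-vertex $z\in P_j^o$ adjacent to $e$. By Claim~\ref{cl1}, $x$ and the other $I$-vertices of the cycles are non-adjacent to $e$. Now consider the three $S$-vertices $a,b,y$ around $x$ together with the $I$-vertices on the cycles that are private to them. The aim is either a copy of $3K_{1,2}$ with three $S$-centres (contradicting Lemma~\ref{lem6}), or a $K_{1,3}\cup_0K_{1,2}$ with $|V(H)\cap S|=2$ (contradicting Lemma~\ref{lem5}). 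If that fails, I would look directly for an induced $K_{1,4}+e$ centred at $e$, with leaves $z$, $x$'s neighbour, and two further vertices.

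The $3$-connectivity of $G$, $|V(G)|\ge 9$, and the minimality of $h_1$ would be used to guarantee enough distinct $I$-neighbours for these configurations. Minimality of $h_1$ also prevents two paths of $\setq_1$ from being merged trivially.
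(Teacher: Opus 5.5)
\textbf{There is a genuine gap.} Your lower bound and your derivation of the ``implying'' part (Claim~\ref{cl1} together with $\delta(G)\ge 3$) are correct and match the paper. The gap is the upper bound, which is the real content of the claim: you never reach a contradiction, and the routes you sketch do not close.
\begin{itemize}
\item \emph{Case~1 rerouting.} Replacing $C_i$ by a shorter alternating cycle plus a path leaves $h_2$ unchanged, so it cannot contradict condition~(1). The leftover vertices contain equally many $S$- and $I$-vertices, so they cannot form an alternating path with $S$-endpoints and all $I$-vertices interior; this is the parity obstacle you noticed. Also, concatenating two alternating paths through an $S$--$S$ clique edge does not give an alternating path.
\item \emph{Case~2 splicing.} Merging $C_i$ and $C_k$ into one cycle needs a second cross edge, which you have no reason to have.
\item \emph{Fallback $K_{1,4}+e$ at $e$.} This is centred at the wrong vertex. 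By Claim~\ref{cl1}, $e$ has no $I$-neighbours on the cycles, so its leaves would have to be pairwise adjacent $S$-vertices or $I$-vertices on paths, and neither is guaranteed.
\end{itemize}

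The missing idea is an induced $K_{1,4}$ centred at the third neighbour $y$ itself. Let $y$ lie on the cycle $C_r$, and let $x',x''$ be the two neighbours of $y$ on $C_r$. Since $xy$ is not an edge of $C_r$, we have $x\notin\{x',x''\}$; this handles your two cases at once. Take any endpoint $u$ of a path in $\setq_1$; this is exactly where $\setq_1\ne\emptyset$ is used. Then:
\begin{itemize}
\item $uy\in E(G)$, since both lie in $S$;
\item $u\ne y$, because Claim~\ref{cl1} gives $N_G(x)\cap End(\setq)=\emptyset$;
\item $x,x',x''\in I$ are pairwise non-adjacent;
\item by Claim~\ref{cl1}, $u$ is adjacent to none of $x,x',x''$.
\end{itemize}
Hence $G[\{y,x,x',x'',u\}]\cong K_{1,4}$, a contradiction. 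No rerouting and no appeal to Lemma~\ref{lem5} or Lemma~\ref{lem6} is needed.
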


\clproof
Assume that $\setq_1\ne \emptyset$.
Suppose there exists
$x\in I\cap V(\setq_2)$,
such that 	$|N_G(x)\cap (S\cap V(\setq_2))|\ne 2$.
Then,
$x$ is adjacent to some
vertex $y\in S$ which is on some cycle $C_r$ in $\setq_2$,
but $xy$ is not an edge in
$C_r$.

Let $u\in End(P_1)$.
As $y,u\in S$,
we have $yu\in E(G)$.
Let $x',x''$ be the two neighbors of $y$ on cycle $C_r$.
By Claim~\ref{cl1},
$\{x,x',x''\}\cap N_G(u)=\emptyset$.
Then, the subgraph induced by
$\{y, x, x',x'',u\}$ is isomorphic to $K_{1,4}$, contradicting the assumption of $G$.

Thus, $|N_G(x)\cap (S\cap V(\setq_2))|=2$.
Since $\delta(G)\ge 3$,
we have 	
$N_G(x)\cap (V(\setq_1)\cap S)\ne \emptyset$.
By Claim~\ref{cl1},
	$N_G(x)\cap (P^o_j\cap S)\ne \emptyset$ for some $P_j\in \setq_1$.
	
Hence Claim~\ref{cl3-0} holds.
\clproofend

It is known that $1\le h_2\le 2$.
Now we are going to prove the following claim.

\begin{cl}\label{cl3-1}
	$h_2=1$.
\end{cl}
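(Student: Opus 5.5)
We argue by contradiction: suppose $h_2=2$, so $\setq_2=\{C_1,C_2\}$. The plan splits according to whether $\setq_1$ is empty. Each $C_i$ is an $(S,I)$-alternating cycle, so it has at least two vertices of $S$ and at least two of $I$. For each $i$, fix $y_i\in S\cap V(C_i)$ and let $x_i',x_i''$ be its two neighbours on $C_i$; both lie in $I$.

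\emph{Case $\setq_1\neq\emptyset$.} Let $u\in End(P_1)$. We claim $G[\{y_1,x_1',x_1'',y_2,u\}]$ is an induced $K_{1,4}+e$ with centre $y_1$ and extra edge $y_2u$.
\begin{itemize}
\item Since $y_1,y_2,u\in S$, the edges $y_1y_2$, $y_1u$ and $y_2u$ are present, and $y_1x_1'$, $y_1x_1''$ are cycle edges.
\item By Claim~\ref{cl1}, neither $x_1'$ nor $x_1''$ is adjacent to $u$, because $u\in End(\setq)$.
\item By Claim~\ref{cl3-0}, each of $x_1',x_1''$ has exactly two neighbours in $S\cap V(\setq_2)$, and these are its two neighbours on $C_1$. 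Hence neither is adjacent to $y_2\in V(C_2)$.
\item $x_1'x_1''\notin E(G)$ since $I$ is independent.
\end{itemize}
This contradicts the assumption that $G$ is $\{K_{1,4},K_{1,4}+e\}$-free, so this case is finished.

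\emph{Case $\setq_1=\emptyset$.} By Claim~\ref{cl2}, $V(G)=V(C_1)\cup V(C_2)$, so $|V(G)|\ge 9$ forces one cycle to have at least three $S$-vertices. Since $\delta(G)\ge 3$, every $x\in I$ has a neighbour $y\in S$ that is not one of its two cycle neighbours. Two subcases arise.
\begin{itemize}
\item If $y$ is on the other cycle, say $x\in C_1$ and $y\in C_2$, we first try to reroute: delete a suitable edge of $C_1$ at $x$ and a suitable edge of $C_2$ at $y$, join the two resulting paths through $xy$, and close up or extend using a clique edge of $S$. The goal is either one alternating cycle or an alternating path covering $V(C_1)\cup V(C_2)$, which contradicts the minimality of $h_2$ (with $h_2$ minimised first, creating a path is allowed).
\item If no such rerouting works, we seek a forbidden configuration. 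One candidate is $\{y,x_2',x_2'',x,y_1\}$ with $y_1$ a cycle neighbour of $x$; it is a $K_{1,4}+e$ provided $y_1$ has no neighbours among $x_2',x_2''$. Otherwise, the three disjoint stars $K_{1,2}$ formed by $y_1,y_2$ and a third $S$-vertex with their cycle neighbours contradict Lemma~\ref{lem6}, once Lemma~\ref{lem5} has excluded the cross adjacencies.
\item If $y$ is on the same cycle as $x$ (a chord), the analogous argument uses $y$, its two cycle neighbours, $x$, and an $S$-vertex of the other cycle.
\end{itemize}

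The main obstacle is the case $\setq_1=\emptyset$. There, no vertex of $End(\setq)$ exists to supply the non-adjacency that Claim~\ref{cl1} gave in the first case. I would first try to exhaust the rerouting possibilities, then use $\delta(G)\ge 3$ and the order bound to locate the independent $K_{1,2}$'s needed for Lemma~\ref{lem6}, and only fall back on 3-connectivity if a small exceptional configuration remains.
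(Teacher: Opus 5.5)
\textbf{Case $\setq_1\neq\emptyset$.} This part is correct, and it takes a different route from the paper. The paper first uses Lemma~\ref{lem6} to force every path of $\setq_1$ to have length $2$: an interior $S$-vertex of a longer path, together with one $S$-vertex from each cycle, would give a $3K_{1,2}$. It then uses the second half of Claim~\ref{cl3-0} together with $\delta(G)\ge 3$ to rule out $\setq_1\neq\emptyset$. Your induced $K_{1,4}+e$ on $\{y_1,x_1',x_1'',y_2,u\}$ gets $x_1',x_1''\not\sim u$ from Claim~\ref{cl1} and $x_1',x_1''\not\sim y_2$ from the first half of Claim~\ref{cl3-0}. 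It needs no information about path lengths, so it is a slightly more direct argument.

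\textbf{Case $\setq_1=\emptyset$: the gap.} Here you only list strategies, and neither is carried through.

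\emph{Rerouting cannot succeed.}
\begin{itemize}
\item With $S_2=\emptyset$ and $\setq_1=\emptyset$ you have $|S|=|I|=k_1+k_2$. An alternating path with $m$ vertices of $I$ uses $m+1$ vertices of $S$, so no pseudo $I$-cover can contain a path at all.
\item Alternating cycles use only $S$--$I$ edges, so ``closing up with a clique edge of $S$'' destroys alternation.
\item Merging the two cycles into one would need a second, suitably placed cross edge, which you do not control.
\end{itemize}

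\emph{The forbidden-configuration branch does not close.} Your candidate $\{y,x_2',x_2'',x,y_1\}$ stops being a $K_{1,4}+e$ as soon as $y_1$ is adjacent to $x_2'$ or $x_2''$. The fallback to Lemma~\ref{lem6} is unjustified, because three pairwise disjoint $K_{1,2}$'s centred on the two cycles need not exist. For example, when $k_1=3$ and $k_2=2$, any two $S$-vertices of $C_1$ share a cycle neighbour.

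\emph{The missing step is short.}
\begin{itemize}
\item Choose $x$ on the cycle, say $C_1$, that has $k_1\ge 3$ vertices of $S$; this is where $|V(G)|\ge 9$ enters.
\item Let $y$ be a neighbour of $x$ other than its two cycle neighbours (it exists since $\delta(G)\ge 3$). Then $y$, with $x$ and its own two cycle neighbours, is the centre of a $K_{1,3}$.
\item If $y\in V(C_1)$, any $S$-vertex of $C_2$ with its two cycle neighbours gives a disjoint $K_{1,2}$.
\item If $y\in V(C_2)$, an $S$-vertex of $C_1$ that is not a cycle neighbour of $x$ gives a disjoint $K_{1,2}$. Such a vertex exists because $k_1\ge 3$.
\end{itemize}
Either way Lemma~\ref{lem5} is contradicted, and this also covers your chord subcase.

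The paper closes this case slightly differently. Lemma~\ref{lem6} gives $k_1\le 3$, the Lemma~\ref{lem5} argument above excludes $k_1=3$, and $k_1=k_2=2$ would give $|V(G)|=8<9$.
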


\clproof
Suppose that $h_{2}= 2$.
By Lemma \ref{lem6},
$P_{i}$ has length 2 for
each $P_i\in \setq_1$,
i.e., $P^o_i=\emptyset$
for each $P_i\in \setq_1$.
Since $\delta(G)\ge 3$, by Claim~\ref{cl3-0},
$\setq_1=\emptyset$.

For $j=1,2$,
let $C_j$ be the alternating cycle $a_{j,1}b_{j,1}
\ldots a_{j,k_j}b_{j,k_j}a_{j,1}$,
where $a_{j,1},a_{j,2},$
$\ldots,a_{j,k_j}\in S$.
Without loss of generality, we assume that $k_{1}\geq k_{2}\geq2$.
Then, by Lemma \ref{lem6}, $ k_{1}\leq 3$.

Suppose that $k_{1}=3$.
Since $d_G(b_{1,1})\ge 3$,
$b_{1,1}$ is adjacent to some
vertex $a_{i,j}$ in $V(\setq_2)\cap S$, where
$i\in [2]$, but $b_{1,1}a_{i,j}$
is not an edge on cycle $C_i$.
Then, it is noticed that $G$ has
a subgraph $H$ with the property that $|V(H)\cap S|=2$
and $H\cong K_{1,3}\cup_0 K_{1,2}$, a contradiction to Lemma~\ref{lem5}.
Hence $k_1=2$.

By Claim~\ref{cl2}, $S_2=\emptyset$.
Since $\setq_1=\emptyset$,
we have $V(G)=V(\setq_2)$,
implying that $|V(G)|=|V(C_1)|+|V(C_2)|=8$,
contradicting the assumption that $|V(G)|\ge 9$.

  Hence Claim~\ref{cl3-1} holds.
  \clproofend

  By Claim~\ref{cl3-1},
  we may assume that
  $\setq_2=\{C\}$,
  where
  $C=a_{1}b_{1}a_{2}b_{2}\ldots a_{k}b_{k}a_{1}$, $k\ge 2$,
  $a_i\in S$ and $b_i\in I$
  for each $i\in [k]$.

  Let's now prove that $\setq_1=\emptyset$.

\begin{cl}\label{cl3-2}
$\setq_1= \emptyset$.
\end{cl}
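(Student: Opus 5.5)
We argue by contradiction: suppose $\setq_1\ne\emptyset$ and produce a pseudo $I$-cover with fewer alternating cycles than $\setq$, contradicting the minimality condition (1). We use Claim~\ref{cl3-0} to find a path of $\setq_1$ that the cycle $C$ touches. We then use $\{K_{1,4},K_{1,4}+e\}$-freeness at a suitable vertex to find a chord that lets us merge $C$ and that path into a single alternating path.

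\textbf{Locating a contact point.} Since $\setq_1\ne\emptyset$, Claim~\ref{cl3-0} applied to $x=b_1\in I\cap V(C)$ gives a path $P_j\in\setq_1$ and a vertex $y\in P_j^o\cap S$ with $b_1y\in E(G)$. Write
\[
P_j=a'_1b'_1a'_2b'_2\ldots b'_t a'_{t+1}, \qquad a'_i\in S,\ b'_i\in I .
\]
Since $y$ is an interior $S$-vertex of $P_j$, we have $y=a'_m$ for some $2\le m\le t$. Its two path-neighbours $b'_{m-1}$ and $b'_m$ therefore exist.

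\textbf{Applying freeness.} Consider the set $\{y,b_1,b'_{m-1},b'_m,a'_1\}$.
\begin{itemize}
\item The vertex $y$ is adjacent to all four other vertices: to $a'_1$ because both lie in the clique $S$, and to the other three by construction.
\item The vertices $b_1,b'_{m-1},b'_m$ lie in $I$, so they are pairwise non-adjacent.
\item Since $a'_1\in End(\setq)$, Claim~\ref{cl1} gives $a'_1b_1\notin E(G)$.
\end{itemize}
Hence the subgraph induced by this set is $K_{1,4}$ plus the edges from $a'_1$ to $\{b'_{m-1},b'_m\}$. If $a'_1$ were adjacent to neither of them, we would get an induced $K_{1,4}$. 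If it were adjacent to exactly one, we would get an induced $K_{1,4}+e$. Therefore $a'_1$ is adjacent to both, and in particular $a'_1b'_m\in E(G)$. The case $m=2$, where $b'_{m-1}=b'_1$ is already adjacent to $a'_1$, is covered by the same argument.

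\textbf{Rerouting.} Using the chords $a'_1b'_m$ and $yb_1$, form
\[
P'=a'_{t+1}b'_t\ldots a'_{m+1}\,b'_m\,a'_1b'_1a'_2\ldots a'_{m-1}b'_{m-1}\,y\,b_1a_2b_2\ldots a_kb_k\,a_1 .
\]
This is an $(S,I)$-alternating path. Its endpoints are $a'_{t+1}$ and $a_1$, both in $S$, and $V(P')=V(P_j)\cup V(C)$. Every vertex of $I$ that was interior to $P_j$ or lay on $C$ is interior to $P'$.

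Replacing $P_j$ and $C$ by $P'$ gives a pseudo $I$-cover with $h_2=0$, indeed an $I$-cover. This contradicts the choice of $\setq$ and also the standing assumption that $G$ has no $I$-cover. Hence $\setq_1=\emptyset$.

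The main obstacle is finding the right fifth vertex for the star centred at $y$. It must be adjacent to $y$ and non-adjacent to $b_1$, which Claim~\ref{cl1} guarantees exactly for endpoints of $\setq$. It must also be positioned so that the forced chord closes up the rerouting. Choosing the endpoint $a'_1$ of the same path $P_j$ achieves both.
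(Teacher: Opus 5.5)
Your proof is correct, but it takes a different route from the paper's.

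The paper argues through the forbidden configuration of Lemma~\ref{lem5} and splits on $k$:
\begin{itemize}
\item For $k\ge 3$, it takes the interior neighbour $y$ of $b_1$ given by Claim~\ref{cl3-0}. It uses $y$ as the centre of a $K_{1,3}$ on $b_1$ and the two path-neighbours of $y$, and $a_3$ as the centre of a $K_{1,2}$ on $\{b_2,b_3\}$.
\item For $k=2$, it takes interior neighbours $y_1,y_2$ of $b_1,b_2$. It notes $y_1\neq y_2$ by $K_{1,4}$-freeness and again builds a $K_{1,3}\cup_0 K_{1,2}$ centred at $y_1,y_2$.
\end{itemize}

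You instead run a single exchange argument. On the five vertices $y,b_1,b'_{m-1},b'_m,a'_1$, Claim~\ref{cl1} gives $a'_1b_1\notin E(G)$, and $\{K_{1,4},K_{1,4}+e\}$-freeness then forces the chord $a'_1b'_m$. That chord lets you splice $C$ into $P_j$ and contradict the minimality of $h_2$ directly.

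What your version buys:
\begin{itemize}
\item It is uniform in $k$ and does not need Claim~\ref{cl3-1} or Lemma~\ref{lem5}.
\item It avoids the unstated detail in the paper's $k=2$ case: the two stars must be chosen vertex-disjoint when $y_1,y_2$ are consecutive on the same path.
\item It proves something slightly stronger. Under condition (1), no $I$-vertex of a cycle can be adjacent to an interior $S$-vertex of a path. This, combined with Claim~\ref{cl3-0}, gives the claim at once.
\item It is the same forced-edge exchange the paper itself uses later, in the proof of Lemma~\ref{le5-1}.
\end{itemize}

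The paper's version is non-constructive and simply reuses the forbidden-subgraph lemma it has already established, at the cost of a case analysis.
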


\clproof
Suppose that $\setq_1\ne  \emptyset$.
We first show that $k<3$.
Otherwise, $k\ge 3$.
By Claim~\ref{cl3-0},
$N_G(b_1)\cap \{a_i: i\in [k]\}
=\{a_1,a_2\}$
and $b_1y\in E(G)$ for some
vertex $y$ in $\cup_{P_j\in \setq_1}P_j^o$.
It follows that $G$ has a subgraph $H$ with
$V(H)\cap S=\{y,a_3\}$
such that $H\cong K_{1,3}\cup_0 K_{1,2}$,
contradicting Lemma~\ref{lem5}.

Now $k=2$ and $C$ is the cycle
$a_1b_1a_2b_2a_1$.
For each $i\in [2]$,
since $d_G(b_i)\ge 3$,
$b_i$ is adjacent to some
vertex $y_i$ in $\cup_{P_j\in \setq_1}P_j^o$.
Since $G$ is $K_{1,4}$-free,
$|N_G(y)\cap I|\le 3$
for each $y\in S$,
implying that $y_1\ne y_2$.
It follows that
$G$ has a subgraph $H$ with
the property that
$V(H)\cap S=\{y_1,y_2\}$
and $H\cong K_{1,3}\cup_0 K_{1,2}$,
contradicting Lemma~\ref{lem5}.

Hence Claim~\ref{cl3-2} holds.
\clproofend

We are now going to complete the proof of Lemma~\ref{lem7}.

By Claims~\ref{cl2} and~\ref{cl3-2}, $S_2=\emptyset$
and $\setq_1=\emptyset$.
Since $|V(G)|\ge 9$, we have
$2k=|C|=|V(G)|\ge 9$,
implying that $k\ge 5$.
  By Lemma \ref{lem6},
  $k\le 5$, and thus $k=5$.
  Thus, $C$ is the cycle
  $a_1b_1a_2b_2\ldots a_{5}b_{5}a_1$.

  Since $d_G(b_3)\ge 3$,
  $N_G(b_3)\cap \{a_1,a_2,a_5\}\ne\emptyset$.
  If $a_2\in N_G(b_3)$,
  then $G$ has a
  subgraph $H$ with
  the property that
  $V(H)\cap S=\{a_2,a_5\}$
  and $H\cong K_{1,3}\cup_{0} K_{1,2}$,
  contradicting Lemma~\ref{lem5}.
  Thus, $a_2\notin N_G(b_3)$.
  Similarly,
$a_5\notin N_G(b_3)$.
Then,
$N_G(b_3)\cap \{a_1,a_2,a_5\}\ne\emptyset$
implies that
$a_1\in N_G(b_3)$.

Similarly, it can be shown that
$a_2\in N_G(b_4)$.
Then, $G$ contains a subgraph
$H$ with
$
V(G)=\{a_1,a_2,b_1,b_2,b_3,b_4,b_5\}
$
and
$$
E(H)=\{a_1b_1,a_1b_3,a_1b_5,
a_2b_2,a_2b_4\}.
$$
Clearly, $|V(H)\cap S|=2$
and $H\cong K_{1,3}\cup_0 K_{1,2}$,
contradicting
Lemma~\ref{lem5}.

Hence Lemma~\ref{lem7} holds.
\proofend

\gap

By Lemma \ref{lem7},
we may now assume that
 $\mathcal P=\{P_{1},\ldots,P_{h}\}$ is an $I$-cover of a $3$-connected $\{K_{1,4},K_{1,4}+e\}$-free split graph of order at least 9. Denote by $t_{i}$ the length of path $P_{i}$.
 Clearly, each $t_i$ is even and $t_i\ge 2$.
 In the following, we further show that each $t_{i}$ is no more than 6 if the order of $G$ is at least 11.

\begin{lem}  \label {lem8}
Let $G$ be a 3-connected $\{K_{1,4},K_{1,4}+e\}$-free split graph with a split partition $(S,I)$. If $|V(G)|\geq 11$, then  $G$ has an $I$-cover
$\mathcal P=\{P_{1},\ldots,P_{h}\}$
such that each path is of length
at most $6$.

\end{lem}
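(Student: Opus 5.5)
Starting from Lemma~\ref{lem7}, $G$ has an $I$-cover (here $|V(G)|\ge 11\ge 9$). Among all $I$-covers I would pick $\mathcal P=\{P_1,\ldots,P_h\}$ minimizing $\sum_i \max(t_i-6,0)$, and subject to that minimizing $h$. Suppose some path, say $P_1=a_1b_1a_2b_2\ldots a_mb_ma_{m+1}$ with $a_i\in S$ and $b_i\in I$, has length $t_1=2m\ge 8$, so $m\ge 4$. The goal is to cut $P_1$ into shorter alternating paths that together still cover $b_1,\ldots,b_m$. This needs spare vertices of $S$ to serve as new endpoints. So the first step is to understand $S\setminus V(\mathcal P)$ and the neighbourhoods of the $b_i$'s.

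\textbf{Step 1 (cutting at an outside vertex).} Suppose some $y\in S\setminus V(\mathcal P)$ is adjacent to $b_j$ with $2\le j\le m-1$. Then $P_1$ can be replaced by $a_1b_1\ldots a_jb_jy$ together with $a_{j+1}b_{j+1}\ldots a_{m+1}$ (or, symmetrically, by splitting at $b_{j-1}$). Choosing $j$ near the middle makes both pieces shorter than $P_1$, and iterating this cut brings every length down to at most $6$. So we may assume no interior $b_j$ has a neighbour in $S\setminus V(\mathcal P)$. A similar replacement handles a neighbour that is an endpoint of another path $P_l$: we re-route $b_j$ to that endpoint and merge or split accordingly, provided the resulting lengths are acceptable.

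\textbf{Step 2 (forcing a forbidden configuration).} Under these restrictions, each interior $b_j$ has degree at least $3$, so it has a neighbour among the $a_i$'s or in $Inn(\mathcal P)$ other than its two path-neighbours $a_j,a_{j+1}$. In the first case, with $b_j$ adjacent to $a_i$ where $i\notin\{j,j+1\}$, I would first try a chord rotation: if $b_j a_i$ is an edge with $i<j$, the path $a_1b_1\ldots a_i b_j a_{j+1}\ldots$ together with a reversed segment gives an alternative cover, and combining two chords may shorten the path. Otherwise I would use the path itself to find the forbidden structures. With $m\ge 4$, the vertices $a_2,a_3,a_4$ have pairwise disjoint pairs of $I$-neighbours $\{b_1,b_2\},\{b_3,b_4\}$ drawn from the path; a third disjoint pair, for instance $a_1$ with $b_1$ and a chord neighbour, would give $3K_{1,2}$, contradicting Lemma~\ref{lem6}. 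A vertex $a_i$ carrying three $I$-neighbours from the path, together with another $a_l$ carrying two disjoint ones, gives $K_{1,3}\cup_0K_{1,2}$, contradicting Lemma~\ref{lem5}. Since $P_1$ alone supplies $b_1,\ldots,b_m$ with $m\ge 4$, the chord needed for $\deg(b_j)\ge 3$ creates exactly such a configuration. For instance, $a_2$ already has $b_1,b_2$, $a_4$ has $b_3,b_4$, and any extra neighbour of $b_3$ or $b_2$ produces a vertex of $S$ with three $I$-neighbours disjoint from some $K_{1,2}$ on the path. The length bound $|V(G)|\ge 11$ enters when $m=4$ and the path is short relative to the graph: it guarantees another path or an extra vertex outside $P_1$, which supplies the third star.

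\textbf{Main obstacle.} The hardest part is the case analysis in Step 2 when the extra neighbours of $b_2,\ldots,b_{m-1}$ lie in $Inn(P_l)$ for other paths $P_l$. The stars must be chosen so that they are vertex-disjoint, and must be \emph{subgraphs} rather than induced subgraphs, as Lemmas~\ref{lem5} and~\ref{lem6} allow. Here I would exploit the freedom to take $K_{1,2}$'s from other paths, each of which has an interior $S$-vertex with two $I$-neighbours. The aim is to show that either a re-routing lowers the potential, contradicting the choice of $\mathcal P$, or three disjoint $K_{1,2}$'s appear, contradicting Lemma~\ref{lem6}.
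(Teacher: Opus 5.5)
\textbf{There is a genuine gap.} Your extremal choice (minimising $\sum_i\max(t_i-6,0)$) is workable, and cutting $P_1$ at a neighbour in $S\setminus V(\mathcal P)$ does strictly lower that potential. But the proposal stops exactly where the proof has to happen. Step~2 is a heuristic, and its concrete assertions are false.

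\begin{itemize}
\item For $m=4$ the path $P_1$ contains only four $I$-vertices, $b_1,\dots,b_4$. A $3K_{1,2}$ needs six and a $K_{1,3}\cup_0K_{1,2}$ needs five, so no chord inside $P_1$ can produce the configurations of Lemma~\ref{lem5} or Lemma~\ref{lem6}.
\item Your ``third disjoint pair, $a_1$ with $b_1$'' overlaps the pair $\{b_1,b_2\}$ already used at $a_2$.
\item Even for $m=5$, Lemma~\ref{lem5} alone does not finish. For example, a chord $a_4b_1$ yields no $K_{1,3}\cup_0K_{1,2}$ from the known edges.
\item The bound $|V(G)|\ge 11$ does not ``supply a third star'': an extra vertex of $S$ carries no star at all.
\end{itemize}

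The missing tool is the direct use of $K_{1,4}+e$-freeness. A vertex of $S$ with two $I$-neighbours $x,y$, together with two further vertices of $S$ adjacent to neither $x$ nor $y$, induces $K_{1,4}+e$.

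Your ``main obstacle'' actually disappears. If $m\ge 4$, then $a_2$ with $\{b_1,b_2\}$, $a_4$ with $\{b_3,b_4\}$, and an interior $S$-vertex of any other path of length $\ge 4$ together give $3K_{1,2}$. So every other path has length $2$, and likewise $m\le 5$. Hence $Inn(P_l)=\emptyset$ for $l\neq 1$. For $m\in\{4,5\}$, both cutting at a vertex of $S_2$ and rerouting $b_j$ to an endpoint of such a $P_l$ lower your potential. It follows that $N_G(b_j)\subseteq V(P_1)\cap S$ for every $j$. The two cases then close as follows.

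\begin{itemize}
\item $m=4$: since $|V(G)|\ge 11$, there exist $w_1,w_2\in S\setminus V(P_1)$, and $G[\{a_2,b_1,b_2,w_1,w_2\}]\cong K_{1,4}+e$.
\item $m=5$: Lemma~\ref{lem5} excludes $a_2b_3$, $a_3b_1$, $a_5b_1$, $a_5b_3$ and their mirror images. Then $G[\{a_4,a_5,b_1,b_3,b_4\}]\cong K_{1,4}+e$ rules out the chord $a_4b_1$, which forces $N_G(b_1)=\{a_1,a_2,a_6\}$; symmetrically $N_G(b_5)=\{a_1,a_5,a_6\}$. The third neighbour of $b_3$ must be $a_1$ or $a_6$, giving $K_{1,4}+e$ on $\{a_1,a_5,b_1,b_3,b_5\}$ or on its mirror $\{a_6,a_2,b_5,b_3,b_1\}$.
\end{itemize}

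This is essentially the paper's argument, which uses the choice ``$h$ maximal, then $t_1$ minimal'' in place of your potential. Without these steps your proposal does not establish the lemma.
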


\proof
By Lemma \ref{lem7}, we may choose an $I$-cover $\mathcal P=\{P_{1},\ldots,P_{h}\}$
whose lengths are $t_1, t_2,\ldots,t_h$, respectively,  such that
 $t_{1}\geq t_{2}\geq \cdots \geq t_{h}\ge 2$ and
\begin{enumerate}[(1)]
	\item $h$ is maximized;
	
\item subject to (1), $t_1$ is minimized;

\item subject to (1) and (2),  $|\{j|t_{j}=t_{1}, 2\leq j \leq h\}|$ is  minimized, i.e., the number of paths in $\mathcal P$ with length $t_{1}$ is minimized.
\end{enumerate}

Let $S_{1}=(\bigcup_{i=1}^{h}V(P_{i}))\cap S$ and $S_{2}=S\setminus S_{1}$.

\setcounter{cl}{0}

\begin{cl}\label{le8-cl1}
For $i\in [h]$,
if  $t_i\ge 4$, then
$N_G(x)\cap S_2=\emptyset$
for each $x\in I\cap V(P_i)$.
\end{cl}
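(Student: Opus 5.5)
The plan is to argue by contradiction against the maximality of $h$ in condition (1). I will show that a neighbour of $x$ in $S_2$ lets us split $P_i$ into two vertex-disjoint $(S,I)$-alternating paths that still cover $I\cap V(P_i)$.

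Write $P_i=a_1b_1a_2b_2\ldots a_mb_ma_{m+1}$ with $a_1,\ldots,a_{m+1}\in S$ and $b_1,\ldots,b_m\in I$, so $t_i=2m$. The hypothesis $t_i\ge 4$ gives $m\ge 2$, so $P_i$ contains at least two vertices of $I$. Suppose that $x=b_j\in I\cap V(P_i)$ has a neighbour $y\in S_2$. Since $y\notin V(\setp)$, the vertex $y$ is unused by all paths of $\setp$.

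We split according to the position of $j$.

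\emph{Case $j\ge 2$.} Take
$$P'=a_1b_1\ldots a_{j}, \qquad P''=y\,b_j\,a_{j+1}b_{j+1}\ldots a_{m+1}.$$
Both are alternating paths starting and ending in $S$. The path $P'$ has length $2(j-1)\ge 2$ and distinct ends, and $P''$ has length at least $2$ with distinct ends $y\ne a_{m+1}$.

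\emph{Case $j=1$.} Since $m\ge 2$, use the mirror split instead:
$$P'=a_1b_1y, \qquad P''=a_2b_2\ldots a_{m+1}.$$
Here $P''$ has length $2(m-1)\ge 2$.

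In either case $P'$ and $P''$ are vertex-disjoint. Each is disjoint from every other $P_r$, because $y\in S_2$ and all other vertices come from $P_i$. Together their interiors contain every $b_l$, since each $b_l$ lies strictly between two $S$-vertices of $P'$ or of $P''$.

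Hence $(\setp\setminus\{P_i\})\cup\{P',P''\}$ is an $I$-cover with $h+1$ paths. This contradicts condition (1), so $N_G(x)\cap S_2=\emptyset$.

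The only delicate point is making sure neither piece degenerates to a single vertex, which would not be a legitimate alternating path. This happens only when $x$ is an extreme $I$-vertex and one splits on the wrong side. It is exactly where $t_i\ge 4$ is needed: it guarantees that the side away from the split retains an $I$-vertex, and the choice between the two cases handles this.
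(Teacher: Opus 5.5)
Your proposal is correct and follows essentially the same approach as the paper: you use the unused vertex $y\in S_2$ to split $P_i$ at $x$ into two alternating paths whose vertex sets together are $V(P_i)\cup\{y\}$, giving an $I$-cover with $h+1$ paths and contradicting the maximality of $h$. Your explicit case split on whether $x=b_1$ is a welcome detail the paper leaves implicit, since it shows why $t_i\ge 4$ prevents either piece from degenerating to a single vertex.
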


\clproof
Let $x\in I\cap V(P_i)$,
where $i\in [h]$.
Assume that $t_i\ge 4$.
If $y\in N_G(x)\cap S_2$,
then $P_i$ can be replaced
by two alternating paths
$P_i'$ and $P''_i$
with the property that
$V(P_i')\cup V(P''_i)=V(P_i)\cup \{y\}$.
It follows that $G$ has $I$-cover
$\setp'$ containing
$h+1$ alternating paths,
contradicting the maximality of $h$.
Hence Claim~\ref{le8-cl1} holds.
\clproofend

Let $P_{1}=a_{1}b_{1}a_{2}b_{2}\ldots a_{s}b_{s}a_{s+1}$, where $a_{1},a_{2},\ldots, a_{s+1}\in S$ and $b_{1},b_{2},\ldots, b_{s}\in I$.
By Lemma \ref{lem6}, $s\le 5$.
Obviously, Lemma \ref{lem8} holds immediately if $s\leq3$. Therefore, it is sufficient to show that  $s\notin \{4,5\}$.

\begin{cl}\label{le8-cl2}
If $s\ge 4$, then for $2\le i\le h$,
$t_i=2$
and
$N_G(b_j)\cap (S\cap V(P_i))
=\emptyset$
for each $j\in [s]$.
\end{cl}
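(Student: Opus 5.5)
We prove the two assertions separately. The first uses Lemma~\ref{lem6}. The second uses the extremal choice of $\mathcal P$ in conditions (2) and (3).

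\emph{Step 1: $t_i=2$ for every $2\le i\le h$.} Assume $s\ge 4$, so $P_1=a_1b_1a_2b_2\ldots a_sb_sa_{s+1}$ contains $b_1,\ldots,b_4$. The vertex $a_2$ is adjacent to $b_1,b_2$, and $a_4$ is adjacent to $b_3,b_4$. Suppose some $P_i$ with $i\ge 2$ has $t_i\ge 4$, say $P_i=c_1d_1c_2d_2c_3\ldots$. Then $c_2\in S$ is adjacent to $d_1,d_2\in I$. The three stars centred at $a_2$, $a_4$ and $c_2$ (with leaves $\{b_1,b_2\}$, $\{b_3,b_4\}$ and $\{d_1,d_2\}$) are pairwise vertex-disjoint, because the paths in $\mathcal P$ are disjoint. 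Hence $G$ has a subgraph $H\cong 3K_{1,2}$ with $|V(H)\cap S|=3$, contradicting Lemma~\ref{lem6}. So $t_i=2$ for all $i\ge 2$.

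\emph{Step 2: no $b_j$ has a neighbour in $S\cap V(P_i)$.} Fix $i\ge 2$ and write $P_i=c\,d\,c'$ with $c,c'\in S$ and $d\in I$. Suppose some $b_j$ is adjacent to a vertex of $S\cap V(P_i)$; by symmetry of $c$ and $c'$, say $b_jc\in E(G)$. We reroute using the edge $b_jc$, replacing $P_1$ and $P_i$ by two new alternating paths in one of two ways.
\begin{itemize}
\item[(a)] If $j\le s-2$, take $a_1b_1\ldots a_jb_j\,c\,d\,c'$, of length $2j+2\le 2s-2$, together with $a_{j+1}b_{j+1}\ldots b_sa_{s+1}$, of length $2(s-j)\ge 4$.
\item[(b)] If $j\ge 2$, take $c'\,d\,c\,b_j a_{j+1}\ldots b_sa_{s+1}$, of length $2(s-j)+2\le 2s-2$, together with $a_1b_1\ldots b_{j-1}a_j$, of length $2(j-1)\ge 2$.
\end{itemize}
Since $s\ge 4$, every $j\in[s]$ satisfies $j\le s-2$ or $j\ge 2$, so at least one option applies.

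In both options each new path is an $(S,I)$-alternating path with two distinct endpoints in $S$. The new paths are disjoint from each other and from the remaining paths of $\mathcal P$. Their interiors together cover $I\cap(V(P_1)\cup V(P_i))$, since $d$ and all of $b_1,\ldots,b_s$ are interior vertices. Hence we obtain a new $I$-cover $\mathcal P'$ with the same number $h$ of paths.

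\emph{Step 3: contradiction with the choice of $\mathcal P$.} Both new paths have length less than $t_1=2s$. Any other path of length $t_1$ lies among $P_2,\ldots,P_h$ other than $P_i$. There are two cases.
\begin{itemize}
\item If $P_1$ was the only path of length $t_1$, then the maximum path length of $\mathcal P'$ is smaller than $t_1$, contradicting condition (2).
\item Otherwise the maximum length is still $t_1$, but the number of paths of that length has dropped by one, contradicting condition (3).
\end{itemize}
In fact Step 1 already forces $t_i=2<t_1$ for every $i\ge 2$, so the first case always occurs. Either way the claim follows.

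The only delicate point is that a valid split exists for every $j$, including the boundary values $j=1$ and $j=s$. This is exactly where $s\ge 4$ is used: it guarantees that one of (a) and (b) applies, and that the chosen option yields two genuine paths, each strictly shorter than $P_1$.
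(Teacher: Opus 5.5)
Your strategy is the paper's: Lemma~\ref{lem6} forces $t_i=2$ for $i\ge 2$. An edge $b_jc$ with $c\in S\cap V(P_i)$ then lets you rebuild $P_1\cup P_i$ into two alternating paths, giving an $I$-cover with $h$ paths, all shorter than $t_1$, which contradicts condition (2). You make explicit the rerouting that the paper only asserts, which is useful. However, option (b) contains a miscount that must be fixed before the argument is correct.

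The path $c'\,d\,c\,b_ja_{j+1}\ldots b_sa_{s+1}$ has $3+(2(s-j)+1)=2(s-j)+4$ edges, not $2(s-j)+2$. It is simply option (a) applied to $P_1$ read backwards, with $j$ replaced by $s+1-j$. For example, with $s=4$ and $j=2$, the path $c'dcb_2a_3b_3a_4b_4a_5$ has length $8=t_1$. So your Step~3 claim that both new paths are shorter than $t_1$ fails for option (b) at $j=2$.

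The fix is to require $j\ge 3$ in (b). Its length is then at most $2s-2$. Since $s\ge 4$, the index sets $\{j: j\le s-2\}$ and $\{j: j\ge 3\}$ still cover $[s]$, so every $j$ is handled and the rest of your argument goes through unchanged.

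This also resolves an inconsistency in your final remark. With your original conditions ($j\le s-2$ or $j\ge 2$), the covering would already hold for $s\ge 3$, so that step would not use $s\ge 4$ at all. With the corrected conditions, $s=3$, $j=2$ is exactly the case where neither rerouting shortens the longest path, which is why $s\ge 4$ is genuinely needed there.
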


\begin{figure}[h!]
	\centering
\includegraphics[width=13cm]
	{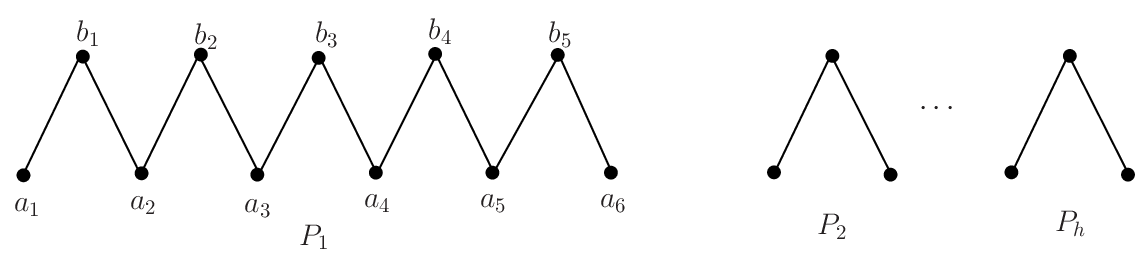}
	\caption{$s=5$ and $t_i=2$
		for $2\le i\le h$.
	}
	\label{le8-cl2-f1}
\end{figure}

\clproof Assume that
$2\le i\le h$.
As $s\ge 4$,
$t_i\ge 4$   implies that
$G$ has a subgraph $H$
with the properties that
$|V(H)\cap S|=3$
and $H\cong 3K_{1,2}$,
contradicting
Lemma \ref{lem6}.
Hence $t_{i}=2$,
as shown in Fig.~\ref{le8-cl2-f1}.

If $N_G(b_j)\cap (S\cap V(P_i))
\ne \emptyset$
for some $i\in [h]\setminus\{1\}$
and $j\in [s]$,
then $P_1$ and $P_i$ can be replaced by another two
alternating paths $P'$ and $P''$
with the properties that
$V(P_1)\cup V(P_i)=
V(P')\cup V(P'')$
and $|V(P')|\le |V(P'')|<2s+1$.
Then, we get a new $I$-cover $\setp'$ from $\setp$
by replacing $P$ and $P_i$
by $P'$ and $P''$.
This new $I$-cover also contains exactly $h$ alternating paths,
and has its longest path shorter than $t_1$ ($=2s$),
contradicting the assumption of $\setp$. Hence
$N_G(b_j)\cap (S\cap V(P_i))
=\emptyset$
for each $j\in[s]$.
Therefore Claim~\ref{le8-cl2}
holds.
\clproofend

\begin{cl}\label{le8-cl3}
	$s\ne 5$.
\end{cl}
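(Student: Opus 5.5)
The plan is to derive a contradiction from $s=5$ using only the local structure forced by Claims~\ref{le8-cl1} and~\ref{le8-cl2}, Lemmas~\ref{lem5} and~\ref{lem6}, and $\delta(G)\ge 3$. Write $P_1=a_1b_1a_2b_2a_3b_3a_4b_4a_5b_5a_6$. By Claim~\ref{le8-cl2}, every other $P_i$ has length $2$, so it has no interior vertex in $I$, and hence $I=\{b_1,\dots,b_5\}$. Since $t_1=10\ge 4$, Claim~\ref{le8-cl1} gives $N_G(b_j)\cap S_2=\emptyset$. Claim~\ref{le8-cl2} gives $N_G(b_j)\cap (S\cap V(P_i))=\emptyset$ for $i\ge 2$. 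Together,
$$N_G(b_j)\subseteq\{a_1,\dots,a_6\}\quad\text{for each } j\in[5],$$
and because $d_G(b_j)\ge 3$, each $b_j$ has at least one ``chord'' neighbor $a_k$ with $k\notin\{j,j+1\}$.

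First I will locate the chord of the middle vertex $b_3$. If $b_3a_2\in E(G)$, then $a_2$ has the three $I$-neighbors $b_1,b_2,b_3$, while $a_5$ has $b_4,b_5$. These give a subgraph $K_{1,3}\cup_0K_{1,2}$ with exactly two vertices in $S$, contradicting Lemma~\ref{lem5}. By the symmetric argument, $b_3a_5\notin E(G)$. So $b_3$ is adjacent to $a_1$ or to $a_6$, and by the reversal symmetry of $P_1$ we may assume $b_3a_1\in E(G)$.

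Next I will treat the chords of $b_2$ and $b_4$ in the same way.
\begin{itemize}
\item For $b_4$: its chord lies among $a_1,a_2,a_3,a_6$. A chord to $a_3$ gives $a_3$ the neighbors $b_2,b_3,b_4$, which together with $a_1$ on $b_1$ and $b_3$ forms a $K_{1,3}\cup_0K_{1,2}$ unless overlaps occur. I will also combine the stars centered at $a_1$ (leaves $b_1,b_3$), $a_2$ or $a_3$ (leaf $b_2$ plus a chord leaf), and $a_5$ (leaves $b_4,b_5$) to aim for a $3K_{1,2}$, contradicting Lemma~\ref{lem6}.
\item For $b_2$: the same star bookkeeping applies.
\item For $b_1$ and $b_5$: where the stars do not close up, I will use their chords as well.
\item When some $a_k$ acquires three $I$-neighbors, I will either apply Lemma~\ref{lem5} with a disjoint two-leaf star, or use a vertex of $S\setminus V(P_1)$ to form an induced $K_{1,4}$ or $K_{1,4}+e$. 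Such a vertex exists when $h\ge 2$ or $S_2\ne\emptyset$, and it is non-adjacent to every $b_j$.
\end{itemize}

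As a backup, whenever a chord $b_ja_k$ lets me split $P_1$ into two alternating paths, I will do so. This requires an extra $S$-endpoint, which the order condition $|V(G)|\ge 11$ supplies, since $|S|\ge 6$. The resulting cover contradicts either the maximality of $h$ or the minimality of $t_1$.

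The main obstacle will be the final case analysis once $b_3a_1$ is fixed. The chords of $b_1,b_2,b_4,b_5$ can overlap on $a_1$ and $a_6$, so the disjoint stars needed for Lemmas~\ref{lem5} and~\ref{lem6} may fail to exist. I would first try to exploit the forced edge $b_3a_1$ by rerouting $P_1$ as $a_3b_2a_2b_1a_1b_3a_4b_4a_5b_5a_6$. This changes which pairs are consecutive, so the symmetric arguments can be reapplied to the new path and should force the chords into a position where Lemma~\ref{lem5} applies.
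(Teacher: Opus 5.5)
There is a genuine gap. Your opening steps are sound and match the paper: $N_G(b_j)\subseteq\{a_1,\dots,a_6\}$, Lemma~\ref{lem5} rules out $b_3a_2$ and $b_3a_5$, and by reversal you may take $b_3a_1\in E(G)$. From there, however, the proposal only lists intentions, and the tools you name cannot produce the contradiction.
\begin{itemize}
\item Lemma~\ref{lem6} is out of reach. A $3K_{1,2}$ needs six distinct $I$-leaves, and only $b_1,\dots,b_5$ are under your control.
\item Your $b_4a_3$ configuration is not a $K_{1,3}\cup_0K_{1,2}$, because both stars contain $b_3$.
\item The ``backup'' split can never happen. Two alternating paths covering $b_1,\dots,b_5$ need seven $S$-vertices adjacent to the $b_j$'s, but you have just shown $N_G(b_j)\subseteq\{a_1,\dots,a_6\}$. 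Also, $|S|\ge 6$ is automatic from $P_1$ and does not supply an extra endpoint.
\item A vertex $w\in S\setminus V(P_1)$ would finish things even by counting: each $a_k$ could then see at most two of the $b_j$, giving at most $12<15$ edges. But when $|V(G)|=11$ you have $V(G)=V(P_1)$, so no such $w$ exists.
\item Incidentally, a path of length $2$ is $a\,b\,a'$ with $b\in I$, so $I=\{b_1,\dots,b_5\}$ is false in general. You do not actually use it.
\end{itemize}

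The missing idea is to apply $K_{1,4}+e$-freeness directly, using another vertex of $P_1$ as the fifth vertex. If $a_k$ has three $I$-neighbours and some $a_m$ is adjacent to exactly one of them, these five vertices induce $K_{1,4}+e$.

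The paper proceeds as follows. It first collects the Lemma~\ref{lem5} exclusions $a_2b_3,a_3b_1,a_5b_1,a_5b_3\notin E(G)$. It then rules out $a_4b_1$ via $G[\{a_4,a_5,b_1,b_3,b_4\}]\cong K_{1,4}+e$, deduces $N_G(b_1)=\{a_1,a_2,a_6\}$, and symmetrically $N_G(b_5)=\{a_1,a_5,a_6\}$. Only then does it fix $b_3a_1$, which gives $G[\{a_1,a_5,b_1,b_3,b_5\}]\cong K_{1,4}+e$.

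Your order works just as well once you have this tool:
\begin{itemize}
\item Lemma~\ref{lem5} gives $b_5a_2,b_5a_4,b_1a_5\notin E(G)$.
\item If $b_5a_3\in E(G)$, the set $\{a_3,a_2,b_2,b_3,b_5\}$ would induce $K_{1,4}+e$. Hence $b_5a_1\in E(G)$.
\item Together with $b_3a_1$, the set $\{a_1,a_5,b_1,b_3,b_5\}$ then induces $K_{1,4}+e$.
\end{itemize}
No rerouting of $P_1$ is needed.
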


\clproof
Suppose that $s=5$.
If $a_{2}b_{3}\in E(G)$, then
$G$ contains a subgraph $H$
with the property that
$V(H)\cap S=\{a_2,a_5\}$ and $H\cong K_{1,3}\cup_0
K_{1,2}$,
contradicting Lemma \ref{lem5}.
Hence $a_{2}b_{3}\notin E(G)$.
Similarly, $a_{3}b_{1},a_{5}b_{1},a_{5}b_{3}\notin E(G)$.

 By Claim~\ref{le8-cl2},
 $N_{G}(b_{j})\cap (S\setminus V(P_{1}))=\emptyset$ for each $j\in [5]$.
 Then,
 as $d_{G}(b_{1})\geq3$
 and $a_{3}, a_{5}\notin
 N_G(b_{1})$,
 we have
  $\{a_{4},a_6\}\cap
  N_G(b_{1})\ne \emptyset
  $.
  If  $a_{4}b_{1}\in E(G)$,
  then   $G[\{a_{4},a_{5},b_{1},b_{3},b_{4}\}]\cong K_{1,4}+e$, a contradiction.
  Thus, $a_4\notin N_G(b_1)$,
  implying that
  $N_G(b_1)=\{a_1,a_2,a_6\}$.
  Similarly,
  $N_G(b_5)=\{a_1,a_5,a_6\}$.

   Since  $d_{G}(b_{3})\geq3$
   and $a_2,a_5\notin N_G(b_3)$,
   we have $\{a_1,a_6\}\cap
   N_G(b_3)\ne \emptyset$.
    Without loss of generality, we assume that $a_{1}b_{3}\in E(G)$. Then $G[\{a_{1}, a_{5}, b_{1},b_{3},b_{5}\}]\cong K_{1,4}+e$, a contradiction.
Hence Claim~\ref{le8-cl3} holds.
\clproofend

\begin{cl}\label{le8-cl4}
	$s\ne 4$.
\end{cl}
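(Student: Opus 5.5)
Suppose $s=4$, so $P_1=a_1b_1a_2b_2a_3b_3a_4b_4a_5$. By Claim~\ref{le8-cl2}, every other path $P_i$ ($2\le i\le h$) has length $2$, so $P_i^o\cap S=\emptyset$. Also $N_G(b_j)\cap(S\cap V(P_i))=\emptyset$ for $j\in[4]$. By Claim~\ref{le8-cl1}, $N_G(b_j)\cap S_2=\emptyset$. Hence $N_G(b_j)\subseteq\{a_1,\dots,a_5\}$ for each $j\in[4]$, and the whole argument takes place inside $G[V(P_1)]$.

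\emph{Step 1: forbid chords $a_ib_j$ that create a forbidden star.} For each possible chord, we look for two disjoint stars on $S$-vertices of $P_1$ in which one centre has three $I$-neighbours and the other has two, with the second star entirely off the first; Lemma~\ref{lem5} then rules the chord out. Where the two centres are adjacent and share a neighbour, we instead look for an induced $K_{1,4}+e$. This should exclude $a_3b_1$ and $a_3b_4$: for instance, $a_3$ with $b_1,b_2,b_3$, together with $a_5$ and $b_4$, gives $K_{1,4}+e$ centred at $a_3$ once we check that $a_3b_4\notin E$. By symmetry it should also exclude $a_4b_1$ and $a_2b_4$, or reduce them to a short list of configurations.

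\emph{Step 2: use $\delta(G)\ge 3$.} Since $d_G(b_1)\ge 3$, $b_1$ needs a third neighbour in $\{a_3,a_4,a_5\}$; symmetrically, $b_4$ needs one in $\{a_1,a_2,a_3\}$. Combining this with Step 1, the surviving cases should be essentially $a_5b_1\in E$ or $a_1b_4\in E$ (possibly both), plus small variants.

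\emph{Step 3: reroute or contradict.} In each surviving case we either find a $K_{1,4}$ or $K_{1,4}+e$ centred at $a_1$ or $a_5$, or use the chord to rearrange $P_1$. For example, if $a_5b_1\in E$, then $b_4a_4b_3a_3b_2a_2b_1a_5$ with $a_1$ gives an alternating cycle-like structure. We then try to split $P_1$, together with a vertex of $S_2$ or with $P_2$, into two alternating paths. This would contradict the maximality of $h$ in (1), or the minimality conditions (2) and (3) if the number of paths stays $h$.

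Here the hypothesis $|V(G)|\ge 11$ is essential. Since $|V(P_1)|=9$, there is at least one other path or a vertex of $S_2$. That vertex $z\in S$ is adjacent to all $a_i$ but to no $b_j$, which supplies the extra non-adjacent leaf needed to complete a $K_{1,4}$ or $K_{1,4}+e$: for instance, $a_1$ with $b_1,b_4,z$ and a further leaf. The main obstacle is this final case analysis: the configurations in which $b_1$ and $b_4$ attach to the far ends, making $G[V(P_1)]$ nearly a cycle. These must be handled carefully using the outside vertex $z$ and 3-connectivity, since $\{a_1,a_5\}$-type cuts would otherwise appear.
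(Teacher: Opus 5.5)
There is a genuine gap: as written, your argument is a plan rather than a proof. Steps 1--3 are hedged (``should exclude'', ``should be essentially'', ``we then try to split''), and you yourself flag the final case analysis as unresolved, so no contradiction is actually derived. The one concrete instance in Step 1 also fails as stated. The configuration you describe ($a_3$ with $b_1,b_2,b_3$, plus $a_5$ and $b_4$) has six vertices, and $b_4$ is not a neighbour of $a_3$, so it cannot be a leaf. On the five vertices $a_3,b_1,b_2,b_3,a_5$ you get $K_{1,4}$ or $K_{1,4}+e$ only if $a_5$ is adjacent to at most one of $b_1,b_2,b_3$, which you have not established.

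The missing idea is a sharper count of the vertices outside $P_1$; it makes the whole chord analysis unnecessary. Since $|V(P_1)|=9$ and $|V(G)|\ge 11$, at least two vertices lie outside $P_1$. Each of them is either in $S_2$ or lies on some $P_i$ with $i\ge 2$. Every such $P_i$ has length $2$, so its two endpoints lie in $S\setminus V(P_1)$. Any vertex of $I$ outside $P_1$ must lie on such a path, because $\mathcal P$ is an $I$-cover. Hence $|S\setminus V(P_1)|\ge 2$, not just $\ge 1$ as in your proposal.

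Take $w_1,w_2\in S\setminus V(P_1)$. You already showed they are adjacent to no $b_j$, and $w_1w_2\in E(G)$ because $S$ is a clique. Then $G[\{a_2,b_1,b_2,w_1,w_2\}]$ is the star centred at $a_2$ with leaves $b_1,b_2,w_1,w_2$, plus the single edge $w_1w_2$. This is $K_{1,4}+e$, a contradiction, and it is exactly the paper's proof. No use of $\delta(G)\ge 3$, no rerouting of $P_1$, and no 3-connectivity is needed.
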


\clproof
Suppose that $s=4$.
By Claim~\ref{le8-cl2},
 $t_{i}=2$ for all
 $i: 2\leq i \leq h$,
 and $N_G(b_j)\cap (S\cap V(P_i))=\emptyset$ for each
 $j\in [4]$.
 By Claim~\ref{le8-cl1},
 $N_{G}(b_{j})\cap S_2=\emptyset$
 for each $j\in [4]$.
Hence  $N_{G}(b_{j})\cap (S\setminus V(P_{1}))=\emptyset$ for each $j\in [4]$.

Since $|V(G)|\geq 11$ and by $G$ has an $I$-cover, we have $|S\setminus V(P_{1})|\geq2$. Then $G[\{a_{2},b_{1},b_{2},w_{1},w_{2}\}]\cong K_{1,4}+e$, where $w_{1},w_{2}\in S\setminus V(P_{1})$, a contradiction.
Hence Claim~\ref{le8-cl4} holds.
\clproofend

Thus, we complete the proof of
Lemma \ref{lem8}.
\proofend

\section{Proof of Theorem \ref{Th1}}

In this section, we focus on
proving Theorem \ref{Th1}.

\begin{lem}\label{le5-1}
Let $G$ be a $3$-connected $\{K_{1,4},K_{1,4}+e\}$-free split graph with a split partition $(S,I)$ and $|V(G)|\geq 13$,
and $\setp=\{P_i: i\in [h]\}$
is an $I$-cover.
Then,
\begin{enumerate}[(i)]
	\item either $S\not\subseteq V(\setp)$ or $h\ge 2$; and
	\item
for any $i\in [h]$,
if $u$ and $v$ are the two endpoints
of $P_i$, then $G$ has a
Hamiltonian $(u,v)$-path.
\end{enumerate}
\end{lem}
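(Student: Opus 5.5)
For (i), I will argue by contradiction: suppose $S\subseteq V(\setp)$ and $h=1$. Then $V(G)=V(P_1)$ with $P_1=a_1b_1\ldots a_sb_sa_{s+1}$, so $|V(G)|=2s+1$. By Lemma~\ref{lem6} (three interior segments of $P_1$ form a $3K_{1,2}$ with three vertices in $S$), $s\le 5$, hence $|V(G)|\le 11<13$, a contradiction. I would double-check that Lemma~\ref{lem6} applies to an alternating path with $s\ge 6$: taking the stars centred at $a_2,a_4,a_6$ with leaves $b_1,b_2$, $b_3,b_4$, $b_5,b_6$ gives a subgraph $3K_{1,2}$ with $|V(H)\cap S|=3$. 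So part (i) is immediate.

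For (ii), fix $i$ and let $u,v$ be the endpoints of $P_i$. The plan is to modify $\setp$ into a new $I$-cover $\setp'$ such that $u,v\notin Inn(\setp')$ and $u,v$ are not the two endpoints of a single path of $\setp'$, and then apply Lemma~\ref{lem9}.

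\textbf{Case A: $h\ge 2$.} Pick $P_j$ with $j\ne i$. I will try to merge $P_i$ and $P_j$ into one alternating path. Since $S$ is a clique, an endpoint $w$ of $P_j$ and $v$ are adjacent, but they cannot be joined through $S$ alone. Instead, split $P_i$: if $t_i\ge 4$, write $P_i=uQ v$ and cut it at an interior $S$-vertex $a$, producing two alternating paths, one from $u$ to $a$ and one from $a'$ to $v$. This needs $I$-vertices adjacent to extra $S$-vertices, so I will use the maximality choices from Lemma~\ref{lem8} together with $\delta\ge 3$. The simplest version is the following. If $t_i=2$, say $P_i=uxv$, then $x$ has a third neighbour $y\in S$. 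If $y\notin V(\setp)$, replace $P_i$ by $uxy$, so $v$ becomes free. If $y$ is an endpoint of some $P_j$, join the paths into $u x y\cdots$, so $u,v$ no longer end the same path. If $y$ is interior, reroute around $y$ in the same way as in Lemma~\ref{lem9}-type arguments. For $t_i\ge 4$, I use $\delta\ge3$ for $b_1$ or $b_s$ similarly.

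\textbf{Case B: $h=1$.} By (i), some $w\in S\setminus V(P_1)$ exists. Using a third neighbour of $b_1$, I will try to reattach so that $u$ or $v$ ceases to be an endpoint, for example $P'=w\,b_1\ldots$ with $u$ left outside. Lemma~\ref{lem8} allows $t_i\le 6$ (we have $|V(G)|\ge13\ge11$), which bounds the case analysis to $s\le3$.

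The main obstacle is the sub-case where every third neighbour of the $I$-vertices of $P_i$ lies in the interior of $P_i$ itself, or in the interior of other paths. There rerouting may create a cycle (the pseudo-cover situation) instead of a path. I would first try to rule this out with Lemmas~\ref{lem5} and~\ref{lem6} and $K_{1,4}+e$-freeness, counting that $|V(G)|\ge13$ forces at least two $S$-vertices outside $P_i$, exactly as in Claim~\ref{le8-cl4}. If that fails, I would try to directly construct a Hamiltonian $(u,v)$-path: traverse $P_i$ from $u$, make detours through the other paths via the clique $S$, and end at $v$.
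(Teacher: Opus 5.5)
Part (i) is correct and is exactly the paper's argument. For (ii) your overall strategy is also the paper's: exchange a few edges of $\mathcal P$ to obtain an $I$-cover in which $u,v$ are neither inner vertices nor the two ends of one path, then apply Lemma~\ref{lem9}. Your easy reroutes, where an $I$-vertex of $P_i$ is adjacent to a vertex of $S_2$ or to an endpoint of another path, are the paper's first step.

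However, the case that carries the entire content of (ii) is left open. This is the case where the neighbour $c\notin\{u,a_2\}$ of $b_1$, supplied by $\delta(G)\ge 3$, is an inner vertex of $\mathcal P$. You handle it only by ``reroute around $y$ as in Lemma~\ref{lem9}-type arguments'' or ``I would try to rule this out'', and neither is an argument. As you note yourself, rerouting at an inner vertex leaves a dangling $I$-vertex or closes a cycle. Your fallback of building the Hamiltonian path by hand is not specified.

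Note also that Lemma~\ref{le5-1} concerns an \emph{arbitrary} $I$-cover, so you cannot use the extremal choices of Lemma~\ref{lem8} (maximal $h$, lengths at most $6$). You do not need them: Lemma~\ref{lem6} already gives $|V(P_i)|\le 11$, and adjacency to $S_2$ is handled by direct rerouting.

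The missing idea is to turn the easy reroutes into a non-adjacency statement and then apply the forbidden subgraphs \emph{at} $c$. Write $P_i=ub_1a_2b_2\cdots a_sb_sv$. By the easy reroutes we may assume that no $b_l$ has a neighbour in $S_2\cup(End(\mathcal P)\setminus\{u,v\})$.

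(a) Suppose $c=a_j$ with $3\le j\le s$ lies on $P_i$ itself. Pick $d\in S_2\cup(End(\mathcal P)\setminus V(P_i))$, which exists by (i). Then $\{c,b_1,b_{j-1},b_j,d\}$ induces $K_{1,4}$.

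(b) Suppose $c$ is an inner vertex of another path, with path-neighbours $b',b''$ there. Since $ub_1\in E(G)$, $K_{1,4}+e$-freeness of $G[\{c,u,b_1,b',b'']\}]$ forces $u$ to be adjacent to $b'$ or $b''$, say $b''$. Then $\mathcal P-\{ub_1,cb''\}+\{cb_1,ub''\}$ is an $I$-cover in which $u$ and $v$ end different paths, so Lemma~\ref{lem9} applies.

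There is one degenerate situation, $N_G(b_1)=\{u,a_2,v\}$. In that case run the same argument with $b_s$ and $v$. If that degenerates too, then $u$, $b_1$, $b_s$ and two vertices of $S\setminus V(P_i)$ (which exist because $|V(G)|\ge 13$) induce $K_{1,4}+e$.

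The paper proves (ii) by essentially this $K_{1,4}$/$K_{1,4}+e$ analysis at $c$, and without it your plan for (ii) does not close.
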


\proof  (i) Suppose that
$S\subseteq V(\setp)$
and $h=1$.
Then $V(G)=S\cup I=V(\setp)
=V(P_1)$,
implying that $|V(P_1)|
=|V(G)|\ge 13$
and $G$ contains a subgraph $H$ with the property that
$|V(H)\cap S|=3$ and $H\cong 3K_{1,2}$,
contradicting Lemma~\ref{lem6}.
Thus, (i) holds.

(ii) The proof is by contradiction. Suppose that $G$ does not have a Hamiltonian $(u,v)$-path. For any $i\in [h]$,  let
$P_{i}=a_{1}^{i}b_{1}^{i}a_{2}^{i}b_{2}^{i}\ldots a_{k_{i}}^{i}b_{k_{i}}^{i}a_{k_{i}+1}^{i}$, where $a_{1}^{i},a_{2}^{i},\ldots, a_{k_{i+1}}^{i}\in S_{1}$, $b_{1}^{i},b_{2}^{i},\ldots, b_{k_{i}}^{i}\in I$.
Without loss of generality,
suppose that  $u$ and $v$ are
the endpoints of $P_{1}$, i.e.,
$u=a_{1}^{1}$, and $v=a_{k_1+1}^{1}$.

Let $S_{1}= V(\mathcal P) \cap S$ and $S_{2}=S\setminus S_{1}$.

\setcounter{cl}{0}

\begin{cl}\label{le51-cl1}
	$N_{G}(b_{i}^{1}) \cap
	((End(\mathcal P)\setminus \{u,v\})\cup S_2)=\emptyset$
	for each $i\in [k_{1}]$.
\end{cl}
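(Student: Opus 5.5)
The plan is to argue by contradiction: from a forbidden adjacency I will build a new $I$-cover $\setp'$ in which $u$ and $v$ are not the two endpoints of a common path and neither lies in $Inn(\setp')$. Lemma~\ref{lem9} then produces a Hamiltonian $(u,v)$-path, contradicting the standing assumption of part (ii). No forbidden-subgraph argument should be needed here; the claim is purely a rerouting statement.

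Fix $i\in[k_1]$ and suppose $b_i^1$ has a neighbour $w\in (End(\setp)\setminus\{u,v\})\cup S_2$. Write $P_1=a_1^1b_1^1\ldots a_{k_1}^1b_{k_1}^1a_{k_1+1}^1$. Cutting $P_1$ at the edge $b_i^1a_{i+1}^1$ gives two pieces:
\[
L=a_1^1b_1^1\ldots a_i^1b_i^1, \qquad R=a_{i+1}^1b_{i+1}^1\ldots a_{k_1+1}^1 .
\]
$L$ starts at $u$ and ends at the $I$-vertex $b_i^1$. $R$ is an $(S,I)$-alternating path from $a_{i+1}^1$ to $v$ when $i<k_1$, and is the single vertex $v$ when $i=k_1$; in the latter case $R$ is simply discarded.

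\emph{Case $w\in S_2$.} Let $P'=L+b_i^1w$, which is an $(S,I)$-alternating path from $u$ to $w$. Set $\setp'=(\setp\setminus\{P_1\})\cup\{P',R\}$, omitting $R$ if $i=k_1$. The paths are pairwise disjoint because $w\notin V(\setp)$, and every vertex of $I$ remains an interior vertex of some path, since the $b$'s of $P_1$ lie in the interior of $P'$ or $R$. So $\setp'$ is an $I$-cover.

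\emph{Case $w\in End(P_j)$ for some $j\ne 1$.} Let $P'$ consist of $L$, the edge $b_i^1w$, and then all of $P_j$ traversed from $w$ to its other end. Since $w$ is an endpoint of $P_j$, this is again an alternating path, and $\setp'=(\setp\setminus\{P_1,P_j\})\cup\{P',R\}$ (with $R$ omitted when $i=k_1$) is an $I$-cover. The vertex $w$ now lies in $Inn(\setp')$, but $w\ne u,v$, so this causes no problem.

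In both cases $u$ is an endpoint of $P'$. The vertex $v$ is either an endpoint of the different path $R$, or lies in no path at all. Hence $u,v\in V(G)\setminus Inn(\setp')$, and they are not the two endpoints of one path of $\setp'$. Lemma~\ref{lem9} therefore gives a Hamiltonian $(u,v)$-path, a contradiction, which proves the claim.

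The only delicate point is the bookkeeping in the boundary cases. When $i=k_1$ one must check that dropping the trivial piece $\{v\}$ is allowed, which holds because $v$ is in $S$ and Lemma~\ref{lem9} only needs $v\notin Inn(\setp')$. One must also check that in the merging case the new path still has distinct ends and alternates correctly. I expect these to be routine once the two pieces $L$ and $R$ are set up as above.
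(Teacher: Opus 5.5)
Your proof is correct and is essentially the paper's argument. Both reroute $b_i^1$ to the forbidden neighbour to obtain a new $I$-cover in which $u,v$ are neither inner vertices nor the two ends of a common path, and then invoke Lemma~\ref{lem9}. The only difference is cosmetic: the paper deletes $a_i^1b_i^1$, so the new path through $b_i^1$ carries $v$, whereas you delete $b_i^1a_{i+1}^1$, so it carries $u$. This is the mirror-image construction, and your treatment of the degenerate one-vertex piece is the same as the paper's implicit handling.
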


\clproof 
Suppose that $b_{i}^{1}c\in E(G)$ for some
$c\in (End(\mathcal P)\setminus \{u,v\})\cup S_2$.
Then
$\mathcal P^{'}=\mathcal P -\{a_{i}^{1}b_{i}^{1}\}+\{b_{i}^{1}c\}$ is a new $I$-cover of $G$ such that $u,v\notin Inn(\mathcal P^{'})$, and $u,v$ are not the endpoints of any path of $\mathcal P^{'}$. By Lemma \ref{lem9}, $G$ has a Hamiltonian $(u,v)$-path, a contradiction.  Thus, Claim  \ref{le51-cl1} holds. \clproofend

Consider $b_{1}^{1}$.
Since $d_{G}(b_{1}^{1})\geq 3$,
Claim \ref{le51-cl1} implies that
$b_{1}^{1}c\in E(G)$ for some
vertex $c\in Inn(\mathcal P)\setminus\{a_{2}^{1}\}$.

\begin{cl}\label{le51-cl2}
	$c\notin V(P_{1})$.
\end{cl}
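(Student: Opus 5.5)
We argue by contradiction. Suppose $c\in V(P_1)$. Since $c\in Inn(\mathcal P)\setminus\{a_2^1\}$, and the endpoints $u=a_1^1$ and $v=a_{k_1+1}^1$ are not in $Inn(\mathcal P)$, we have $c=a_j^1$ for some $j$ with $3\le j\le k_1$. The plan is to show that $a_j^1$ is then the centre of an induced $K_{1,4}$, which contradicts the assumption on $G$.

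\emph{Three $I$-neighbours of $a_j^1$.} Because $3\le j\le k_1$, the vertex $a_j^1$ is adjacent on $P_1$ to both $b_{j-1}^1$ and $b_j^1$. By assumption it is also adjacent to $b_1^1$. These three vertices are pairwise distinct: $b_j^1$ exists since $j\le k_1$, and $b_{j-1}^1\ne b_1^1$ since $j\ge 3$. They lie in the independent set $I$, so they are pairwise non-adjacent.

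\emph{A fourth neighbour from outside $P_1$.} We take a fourth neighbour $w\in S$ of $a_j^1$ with $w\in (End(\mathcal P)\setminus\{u,v\})\cup S_2$. By Lemma~\ref{le5-1}(i), either $S\not\subseteq V(\mathcal P)$ or $h\ge 2$. In the first case we take $w\in S_2$. In the second case we take $w$ to be an endpoint of $P_2$; it differs from $u$ and $v$ because the paths of $\mathcal P$ are vertex-disjoint. In both cases $w\in S$ and $w\ne a_j^1$, so $wa_j^1\in E(G)$ because $S$ is a clique.

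\emph{Conclusion.} By Claim~\ref{le51-cl1}, $w$ is adjacent to none of $b_1^1$, $b_{j-1}^1$, $b_j^1$. Therefore
$$G[\{a_j^1,b_1^1,b_{j-1}^1,b_j^1,w\}]\cong K_{1,4},$$
which contradicts the assumption that $G$ is $K_{1,4}$-free. Hence $c\notin V(P_1)$.

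The only step that needs care is the choice of $w$: it must be adjacent to $a_j^1$ and non-adjacent to all three $b$'s. This is exactly why we take $w$ from $(End(\mathcal P)\setminus\{u,v\})\cup S_2$ rather than from $P_1$, so that Claim~\ref{le51-cl1} applies, and this set is nonempty by Lemma~\ref{le5-1}(i).
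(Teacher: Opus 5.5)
Your proof is correct and follows essentially the same route as the paper. The paper likewise picks $d\in S_2\cup\{a_1^2\}$ via Lemma~\ref{le5-1}(i), takes the two $I$-neighbours of $c$ on $P_1$ other than $b_1^1$, and applies Claim~\ref{le51-cl1} to get an induced $K_{1,4}$ centred at $c$; your explicit identification $c=a_j^1$ with $3\le j\le k_1$ is a welcome extra check that the three $I$-neighbours are distinct.
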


\clproof Suppose that
$c\in V(P_{1})$.
By the result of (i),
either
$h\ge 2$ or $S_{2}\ne \emptyset$.
Then, there exists
$d\in S_2\cup \{a_1^2\}$.
Clearly, $d\in N_G(c)$.
As $c\in Inn(\setp)\setminus \{a^1_2\}$,
there exists $c',c''\in
(N_{G}(c)\cap I\cap V(P_{1}))
\setminus \{b^1_1\}$.
By Claim 1,
$\{b_{1}^{1},c',c''\}\cap N_G(d)
=\emptyset$,
implying that
$G[\{c,b_{1}^{1},c',c^{\prime\prime},d\}]\cong K_{1,4}$, a contradiction.
Thus, Claim  \ref{le51-cl2} holds. \clproofend

Now it is known that $c\in Inn(\setp)\setminus
V(P_{1})$,
implying that $h\ge 2$.

\begin{figure}[htbp]
	\centering
	%Requires \usepackage{graphicx}
	\includegraphics[width=0.85\linewidth]{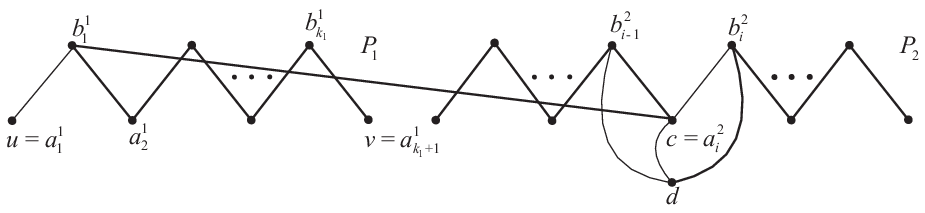}
	\caption{The case when $d\in S_{2}\cup \{a_{1}^{3}\}$ in the proof of Claim \ref{le51-cl3}.}
\end{figure}

\begin{cl}\label{le51-cl3}
	$h=2$ and $S_{2}=\emptyset$.
\end{cl}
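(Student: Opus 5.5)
We argue by contradiction in two steps: first $h=2$, then $S_2=\emptyset$. The key fact is that $b_1^1$ has a neighbour $c\in Inn(\setp)\setminus V(P_1)$. We may assume $c\in P_2^o$, so $c=a^2_j$ for some $2\le j\le k_2$. Then $c$ has two neighbours on $P_2$ in $I$, namely $b^2_{j-1}$ and $b^2_j$. It also has the neighbour $b_1^1$, which lies off $P_2$.

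\emph{Step 1: $h=2$.} Suppose $h\ge 3$ and let $d=a_1^3$. This is an endpoint of a third path, and $dc\in E(G)$ because both lie in $S$. By Claim~\ref{le51-cl1}, $d\notin N_G(b_1^1)$. Consider the five vertices $c,b_1^1,b^2_{j-1},b^2_j,d$.
\begin{itemize}
\item If $d$ is adjacent to neither $b^2_{j-1}$ nor $b^2_j$, these five vertices induce $K_{1,4}$ centred at $c$.
\item If $d$ is adjacent to exactly one of them, they induce $K_{1,4}+e$.
\end{itemize}
So $d$ is adjacent to both $b^2_{j-1}$ and $b^2_j$. In that case we exploit the edge $d b^2_j$ (or $db^2_{j-1}$) to re-route. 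Replace $P_2$ and $P_3$ by the path obtained by cutting $P_2$ at $b^2_j$ and attaching the piece to $d$, together with a second path. Alternatively, use the edge $b_1^1c$ to move $b_1^1$ onto a different path. The aim in either case is a new $I$-cover $\setp'$ in which $u,v$ are not both endpoints of one path and neither lies in $Inn(\setp')$. Lemma~\ref{lem9} then gives a Hamiltonian $(u,v)$-path, a contradiction.

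The same argument works with $d\in S_2$ in place of $a_1^3$, as in the figure. Here $d$ adjacent to $b^2_j$ lets us extend or split $P_2$ with $d$ as a new endpoint. This gives $S_2=\emptyset$.

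\emph{Main obstacle.} The hard part is the sub-case where $d$ is adjacent to both $b^2_{j-1}$ and $b^2_j$. Writing down an explicit re-routing that keeps $u,v$ as free endpoints of different paths requires care about where $j$ sits. If $j$ is an interior index, the pieces $a^2_1\ldots b^2_{j-1}d$ and $d b^2_j\ldots a^2_{k_2+1}$ overlap at $d$. In that situation I would instead rebuild the cover as follows:
\begin{itemize}
\item $P_1$ is split at $b_1^1$: the path $a_1^1 b_1^1 c$ is joined to one half of $P_2$;
\item the rest of $P_1$ forms a separate path;
\item the other half of $P_2$ is capped by $d$.
\end{itemize}
This makes $u$ and $v$ endpoints of distinct paths. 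If $d$ was the endpoint $a_1^3$, we must also check that $P_3$ still covers its interior vertices of $I$. When that fails, we fall back on the alternative: let $d$'s role be played by the other endpoint $a^3_{k_3+1}$, or apply Lemma~\ref{lem5} to the structure $\{c,d\}$ with the three $I$-neighbours of $c$ and two $I$-neighbours of $d$ on $P_3$, which forbids the configuration.
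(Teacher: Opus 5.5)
Your argument is essentially the paper's: take $d\in S_2\cup\{a_1^3\}$, get $db_1^1\notin E(G)$ from Claim~\ref{le51-cl1}, force $db^2_{j-1},db^2_j\in E(G)$ because $G[\{c,b_1^1,b^2_{j-1},b^2_j,d\}]$ is neither $K_{1,4}$ nor $K_{1,4}+e$, re-route, and apply Lemma~\ref{lem9}. Your explicit re-routing into $a_1^1b_1^1a_j^2b^2_{j-1}\cdots a^2_1$, $a^1_2\cdots a^1_{k_1+1}$ (dropped if $k_1=1$) and $db^2_j\cdots a^2_{k_2+1}$ (glued to $P_3$ at $d$) is valid; it is a mirror image of the paper's $\setp-\{a_1^1b_1^1,a_j^2b_j^2\}+\{b_1^1c,b_j^2d\}$, which instead leaves $u$ uncovered and makes $v$ an endpoint.

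Your worry about $P_3$ is unfounded: $d=a_1^3$ is an endpoint of $P_3$, so gluing there keeps every interior vertex of $P_3$ interior. The fallback is therefore unnecessary. Its Lemma~\ref{lem5} variant could not be used anyway, since $d$ already has three $I$-neighbours $b^2_{j-1},b^2_j,b^3_1$ and so has only one on $P_3$.
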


\clproof
Suppose that $h\geq 3$ or $S_{2}\neq\emptyset$.
Then there exists
$d\in S_{2}\cup \{a_{1}^{3}\}$.
As $c\in Inn(\setp)\setminus
V(P_{1})$,
without loss of generality, we assume that $c=a_{i}^{2}$. Since $G[\{b_{1}^1,a_{i}^{2},b_{i-1}^2,b_{i}^2,d\}]$ cannot
be isomorphic to
$K_{1,4}$ or $K_{1,4}+e$,
we have $b_{i-1}^2d,b_{i}^2d\in E(G)$ by Claim \ref{le51-cl1} (depicted in Fig. 5). Then $\mathcal P^{'}=\mathcal P-\{a_{1}^{1}b_{1}^{1},a_{i}^{2}b_{i}^{2}\}+\{b_{1}^{1}c,b_{i}^{2}d\}$ is a new  $I$-cover  of $G$ with $u\notin V(\mathcal P')$ and $v\in End (\mathcal P')$. By Lemma \ref{lem9}, $G$ has Hamiltonian $(u,v)$-path, a contradiction.  Thus, Claim  \ref{le51-cl3} holds. \clproofend

Then, $c\in V(P_{2})\cap Inn(\setp) $. We assume that $c=a_{j}^{2}$.

\begin{cl}\label{le51-cl5}
	 $a_{1}^{1}b_{j-1}^{2},a_{1}^{1}b_{j}^{2}\notin E(G)$.
\end{cl}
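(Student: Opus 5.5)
We argue by contradiction, in the same way as the proof of Claim~\ref{le51-cl3}. We are in the situation $h=2$, $S_2=\emptyset$, and $b_1^1$ is adjacent to $c=a_j^2\in Inn(\setp)$, so $2\le j\le k_2$. Hence $b_{j-1}^2$ and $b_j^2$ both exist and are the two neighbours of $a_j^2$ on $P_2$. By symmetry between $b_{j-1}^2$ and $b_j^2$, it suffices to show $a_1^1b_j^2\notin E(G)$. So suppose $a_1^1b_j^2\in E(G)$, where $a_1^1=u$.

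\textbf{Step 1 (re-routing).} Define
$$\setp'=\setp-\{a_1^1b_1^1,\ a_j^2b_j^2\}+\{b_1^1a_j^2,\ b_j^2a_1^1\}.$$
We check that $\setp'$ is an $I$-cover. The path $P_1$ loses its initial edge $a_1^1b_1^1$, and $b_1^1$ now hangs on $a_j^2$. The path $P_2$ is cut at the edge $a_j^2b_j^2$ into two pieces:
\begin{enumerate}[(1)]
\item the segment $a_1^2b_1^2\cdots b_{j-1}^2a_j^2$, which is joined through the new edge $a_j^2b_1^1$ to $P_1-a_1^1=b_1^1a_2^1\cdots a_{k_1+1}^1$;
\item the segment $b_j^2a_{j+1}^2\cdots a_{k_2+1}^2$, which is joined through the new edge $b_j^2a_1^1$ to $a_1^1$.
\end{enumerate}
This gives two vertex-disjoint alternating paths:
$$a_1^2\cdots b_{j-1}^2a_j^2b_1^1a_2^1\cdots a_{k_1+1}^1 \quad\text{and}\quad a_1^1b_j^2a_{j+1}^2\cdots a_{k_2+1}^2.$$
Every vertex of $I$ is still an interior vertex of one of them, so $\setp'$ is an $I$-cover.

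\textbf{Step 2 (apply Lemma~\ref{lem9}).} In $\setp'$ we have $u=a_1^1\in End(\setp')$ and $v=a_{k_1+1}^1\in End(\setp')$. Thus $u,v\notin Inn(\setp')$, and $u,v$ are endpoints of different paths of $\setp'$. By Lemma~\ref{lem9}, $G$ has a Hamiltonian $(u,v)$-path, which is a contradiction. The edge $a_1^1b_{j-1}^2$ is handled symmetrically, cutting $P_2$ at $a_j^2b_{j-1}^2$ instead:
$$a_{k_2+1}^2\cdots b_j^2a_j^2b_1^1a_2^1\cdots a_{k_1+1}^1 \quad\text{and}\quad a_1^1b_{j-1}^2a_{j-1}^2\cdots a_1^2.$$

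\textbf{Main obstacle.} The only delicate point is the bookkeeping of the endpoints. We must make sure that $u$ and $v$ do not end up as the two ends of the same path, and that neither becomes an inner vertex. In both cases above, $u$ is the end of the short piece coming from $P_2$, while $v$ stays at the end of the long path. So the hypothesis of Lemma~\ref{lem9} is satisfied. We also need $j\ge2$ and $j\le k_2$, which holds because $c\in Inn(\setp)$.
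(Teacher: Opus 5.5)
Your proposal is correct and matches the paper's proof: the paper makes the same edge exchange $\setp-\{a_1^1b_1^1,a_j^2b_j^2\}+\{a_1^1b_j^2,a_j^2b_1^1\}$ (or the analogous one with $b_{j-1}^2$) and applies Lemma~\ref{lem9}, since $u$ and $v$ become endpoints of different paths. Your explicit listing of the two new alternating paths simply spells out the verification that the paper states in one line.
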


\clproof Suppose that $a_{1}^{1}b_{j-1}^{2}\in E(G)$ or $a_{1}^{1}b_{j}^{2}\in E(G)$. Then $\mathcal P^{'}=\mathcal P-\{a_{1}^{1}b_{1}^{1},a_{j}^{2}b_{j-1}^{2}\}+\{a_{1}^{1}b_{j-1}^{2},a_{j}^{2}b_{1}^{1}\}$ or $\mathcal P^{'}=\mathcal P-\{a_{1}^{1}b_{1}^{1},a_{j}^{2}b_{j}^{2}\}+\{a_{1}^{1}b_{j}^{2},a_{j}^{2}b_{1}^{1}\}$ is a new $I$-cover  of $G$ with $u,v\notin Inn(\mathcal P^{'})$,
and $u,v$ are not the endpoints of any path in $\mathcal P^{'}$. By Lemma \ref{lem9},
$G$ has a Hamiltonian $(u,v)$-path, a contradiction.
Thus,  Claim \ref{le51-cl5} holds.

By Claim \ref{le51-cl5}, $G[\{a_{j}^{2},a_{1}^{1},b_{1}^{1},b_{j-1}^{2},b_{j}^{2}\}]\cong K_{1,4}+e$, a contradiction. Hence, (ii) holds.

Thus, we complete the proof of Lemma  \ref{le5-1}.
\proofend

\gap

\noindent{{\bf Proof of Theorem \ref{Th1}}. }The proof is by contradiction. Suppose that $G$ is not Hamilton-connected. Then there exist $u,v\in V(G)$ such that $G$ does not have a Hamiltonian $(u,v)$-path. Let $(S,I)$ be a split partition of $G$.
By Lemma \ref{lem8}, $G$ contains an $I$-cover
$\mathcal P=\{P_{1},\ldots,P_{h}\}$
whose paths are of lengths at most $6$.

\setcounter{cl}{0}

\begin{cl} \label {cl8-1}
	$uv\notin E(\mathcal P)$.

\end{cl}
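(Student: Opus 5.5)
Suppose, for a contradiction, that $uv\in E(\mathcal P)$, say $uv\in E(P_i)$ for some $i\in[h]$. Every edge of an $(S,I)$-alternating path joins $S$ to $I$, so one of $u,v$ lies in $I$. By symmetry let $u\in S$ and $v\in I$. Write $P_i=a_1b_1a_2\ldots a_kb_ka_{k+1}$ with $k\le 3$ (Lemma~\ref{lem8}), and let $v=b_j$ with $u\in\{a_j,a_{j+1}\}$; after reversing $P_i$ if necessary, $u=a_j$.

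The guiding observation is that $Inn(\mathcal P)\subseteq S$, so $v\notin Inn(\mathcal P)$ always holds, and $v$, being in $I$, is never an endpoint of any path. Hence Lemma~\ref{lem9} applies to the pair $u,v$ for any $I$-cover $\mathcal P'$ in which $u\notin Inn(\mathcal P')$. The whole proof therefore reduces to producing an $I$-cover in which $u$ is not an interior vertex.

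If $j=1$, then $u=a_1$ is already an endpoint of $P_i$. Lemma~\ref{lem9} then gives a Hamiltonian $(u,v)$-path, a contradiction. So assume $2\le j\le k$, so that $u$ is interior and $b_{j-1}$ precedes $u$ on $P_i$.

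Next I would reroute $P_i$ so that $u$ becomes an endpoint. The natural move is to cut $P_i$ just before $u$, keeping $a_jb_j\ldots a_{k+1}$ (which now starts at $u$), and to re-attach $b_{j-1}$ through another neighbour. Since $\delta(G)\ge 3$, $b_{j-1}$ has a neighbour $w\notin\{a_{j-1},a_j\}$. The cases for $w$ are as follows.
\begin{enumerate}[(a)]
\item If $w\in S_2$, then $a_1\ldots a_{j-1}b_{j-1}w$ is a new alternating path.
\item If $w$ is an endpoint of another path $P_\ell$, then $a_1\ldots b_{j-1}w$ is concatenated with $P_\ell$.
\item If $w$ is an endpoint of $P_i$ itself (necessarily $a_{k+1}$ or $a_1$), a suitable re-splicing of the pieces makes $u$ an end.
\end{enumerate}
In each of these cases the resulting family is an $I$-cover with $u\notin Inn$, and Lemma~\ref{lem9} finishes. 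The same options are available with the roles of the two sides exchanged, using the other neighbours of $b_j$ or of the $b$'s adjacent to $u$. In that variant one removes $u$ from $P_i$ altogether and lets $u$ lie outside $V(\mathcal P')$, which also satisfies the hypotheses of Lemma~\ref{lem9}.

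The main obstacle is the remaining case, where all extra neighbours of the relevant $I$-vertices ($b_{j-1}$, and symmetrically $b_j$) lie in $Inn(\mathcal P)\setminus\{u\}$. Here I would argue as in Claims~\ref{le51-cl2}--\ref{le51-cl5} of Lemma~\ref{le5-1}. Take such a neighbour $c=a^{\ell}_m$ of $b_{j-1}$, together with its two $I$-neighbours on its own path, and add a vertex $d\in S_2\cup End(\mathcal P)$ not adjacent to these $I$-vertices. Such a $d$ exists by Lemma~\ref{le5-1}(i), and its non-adjacency follows from the case hypothesis, which makes the rerouting moves unavailable. The five vertices then induce $K_{1,4}$ or $K_{1,4}+e$ unless certain edges are present. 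Each forced edge, in turn, allows an exchange of two edges, as in Claim~\ref{le51-cl3} or Claim~\ref{le51-cl5}, producing a cover with $u$ non-interior. Configurations where several paths of length $6$ interact would be ruled out with Lemma~\ref{lem5} and Lemma~\ref{lem6}. I expect this case analysis to be the delicate part, especially when $c$ lies on $P_i$ itself.
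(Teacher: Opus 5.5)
There is a genuine gap. Your reduction to ``find an $I$-cover in which $u$ is not interior'' is never completed. The remaining case, where the extra neighbours of $b_{j-1}$ (and of $b_j$) lie in $Inn(\mathcal P)\setminus\{u\}$, is only sketched by analogy with Claims~\ref{le51-cl2}--\ref{le51-cl5}. In Lemma~\ref{le5-1}, the non-adjacencies between $d$ and the $I$-vertices of $P_1$ come from Claim~\ref{le51-cl1}. That claim holds for \emph{every} $I$-vertex of $P_1$ because $u,v$ are the two endpoints of $P_1$. Your case hypothesis only controls $b_{j-1}$ and $b_j$. So when $c$ lies on $P_i$ (the case you yourself flag), the non-adjacency of $d$ to the $I$-neighbours of $c$ is not available, and the remaining configurations are never checked.

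Case (c) also fails as stated when $w=a_1$ (so $j\ge 3$). The edges of $P_i$ together with $a_1b_{j-1}$ only produce the alternating cycle $a_1b_1a_2\cdots a_{j-1}b_{j-1}a_1$. No rearrangement of these edges keeps all of $b_1,\dots,b_k$ interior to alternating paths while making $u=a_j$ non-interior; try $k=j=3$. More generally, you never show that an $I$-cover with $u$ non-interior must exist, and that is a much stronger target than the claim requires.

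The missing idea is that the edge $uv$ itself yields the Hamiltonian path directly. No rerouting, no Lemma~\ref{lem9} and no forbidden subgraphs are needed; this is what the paper's one-line ``$S$ is a clique'' argument means.

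With $u=a_j$ and $v=b_j$, deleting $uv$ splits $P_i$ into $Q_1=a_1b_1\cdots b_{j-1}a_j$ and $Q_2=b_ja_{j+1}\cdots a_{k+1}$, whose other ends $a_1,a_{k+1}$ lie in $S$. Build the path as follows:
\begin{enumerate}[(1)]
\item traverse $Q_1$ backwards from $u$ to $a_1$;
\item traverse each remaining path of $\mathcal P$ in turn;
\item visit each vertex of $S_2$;
\item jump to $a_{k+1}$ and traverse $Q_2$ backwards to $v$.
\end{enumerate}
Every connecting edge joins two vertices of the clique $S$, so it exists. Since $I\subseteq V(\mathcal P)$ and $S=S_1\cup S_2$, this path is Hamiltonian. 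For $j=1$, $Q_1$ is just $a_1$, and $a_1\ne a_{k+1}$. This contradicts the choice of $u,v$.
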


\clproof Since $S$ is a clique of $G$, $G[S]$ is Hamilton-connected.
	It follows that  if $uv\in E(\setp)$,
	then $G$ has a Hamiltonian $(u,v)$-path, a contradiction to the assumption.
	Thus, $uv\notin E(\mathcal P)$. Hence, Claim \ref{cl8-1} holds. \clproofend

Since $G$ does not have a Hamiltonian $(u,v)$-path,
by Lemmas \ref{lem9} and \ref{le5-1}, $\{u,v\}\cap Inn(\mathcal P)\neq\emptyset$.

Let $S_{1}=V(\mathcal P)\cap S$ and $S_{2}=S\setminus S_{1}$.
 Let $P_{i}=a_{1}^{i}b_{1}^{i}a_{2}^{i}b_{2}^{i}\ldots a_{k_{i}}^{i}b_{k_{i}}^{i}a_{k_{i}+1}^{i}$, where $a_{1}^{i},a_{2}^{i},\ldots, a_{k_{i+1}}^{i}\in S_{1}$, $b_{1}^{i},b_{2}^{i},\ldots, b_{k_{i}}^{i}\in I$, and $i\in[h]$.

In the following, we assume that $\mathcal P=\{P_{1},\ldots,P_{h}\}$ is an $I$-cover of $G$
whose paths are of lengths at most $6$ and  the following
conditions are satisfied:
\begin{enumerate}[(1)]
	\item $|\{u,v\}\cap Inn(\mathcal P)|\geq1$ is minimized, and
	
	\item subject to (1), $h$ is maximized.
	\end{enumerate}

Without loss of generality, we assume that $u=a_{i_{0}}^{1}$, where
$1< i_{0} < k_{1}+1$.

\begin{cl} \label {cl4-1}
\text{ For any} $1\le i\le h$
and
$c\in I\cap V(P_{i})$,
if  $|V(P_{i})|\geq5$,
then  $N_{G}(c)\cap S_{2}=\emptyset$.
	\end{cl}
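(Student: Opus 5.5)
The plan is to argue by contradiction against the extremal choice of $\mathcal P$: a neighbor of $c$ in $S_2$ lets us split $P_i$ into two alternating paths. This contradicts the maximality of $h$ in condition (2), provided condition (1) is not made worse and the length bound $6$ is kept. So suppose $|V(P_i)|\ge 5$, i.e. $k_i\ge 2$, and that $c=b_j^i$ for some $j\in[k_i]$ has a neighbor $y\in S_2$.

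The split is as follows. If $j\ge 2$, I would replace $P_i$ by
$$P'=a_1^ib_1^i\ldots b_{j-1}^ia_j^i \quad\text{and}\quad P''=y\,b_j^i\,a_{j+1}^ib_{j+1}^i\ldots a_{k_i+1}^i .$$
$P'$ is a genuine alternating path because $j\ge 2$ gives it distinct endpoints. $P''$ is one because $y\neq a_{k_i+1}^i$, as $y\in S_2$ lies outside $V(\mathcal P)$. If $j=1$, I would use the symmetric split
$$P'=a_1^ib_1^iy \quad\text{and}\quad P''=a_2^ib_2^i\ldots a_{k_i+1}^i ,$$
where $P''$ is valid since $k_i\ge 2$. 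In both cases the two new paths are vertex-disjoint from each other and from the other paths, because $y\in S_2$ is unused. They contain every vertex of $I\cap V(P_i)$ in their interiors, so $\mathcal P'=(\mathcal P\setminus\{P_i\})\cup\{P',P''\}$ is an $I$-cover with $h+1$ paths. Each new path is no longer than $P_i$, so all lengths remain at most $6$.

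The step that needs care is condition (1). The new interior set $Inn(\mathcal P')$ is contained in $Inn(\mathcal P)$: the vertices $a_j^i$ and $a_{j+1}^i$ (resp. $a_2^i$) become endpoints, $y$ is an endpoint, and no vertex of $S$ becomes newly interior. Hence $|\{u,v\}\cap Inn(\mathcal P')|\le |\{u,v\}\cap Inn(\mathcal P)|$.

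If this quantity dropped to $0$, we would be in a case already excluded. Indeed, $uv\notin E(\mathcal P')$ still holds, since the new edges all join $S$ to $I$, and $u,v$ are then outside $Inn(\mathcal P')$. Either $u,v$ are not the two ends of a single path, and Lemma~\ref{lem9} yields a Hamiltonian $(u,v)$-path; or they are, and Lemma~\ref{le5-1}(ii) does. Both contradict the choice of $u,v$. So the quantity is unchanged and still at least $1$, and $\mathcal P'$ is admissible under (1) with strictly more paths, contradicting (2).

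I expect no real obstacle beyond the bookkeeping at the extreme positions $j=1$ and $j=k_i$. This is exactly where $|V(P_i)|\ge 5$ is needed, so that neither new path degenerates to a single vertex.
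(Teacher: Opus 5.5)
Your proof is correct and follows the paper's reasoning: the paper only says the claim follows from the maximality of $h$, and you supply the details it omits, namely that splitting $P_i$ at the $S_2$-neighbour keeps all lengths at most $6$ and cannot increase $|\{u,v\}\cap Inn(\mathcal P)|$, and that any strict decrease contradicts either condition (1) or, via Lemmas~\ref{lem9} and \ref{le5-1}(ii), the choice of $u,v$. One small slip: in your $j\ge 2$ split, $a_{j+1}^i$ stays interior in $P''$ rather than becoming an endpoint, but $Inn(\mathcal P')=Inn(\mathcal P)\setminus\{a_j^i\}$ still gives the inclusion you need.
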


\clproof Since $h$ is maximized, Claim \ref{cl4-1} follows immediately.
\clproofend

\begin{cl} \label {cl4-2}
If $|N_{G}(d)\cap I|=3$
for some $d\in S_{1}$,
then $S_{2}=\emptyset$.
\end{cl}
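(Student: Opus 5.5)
Suppose, for a contradiction, that $|N_G(d)\cap I|=3$ for some $d\in S_1$ and that $S_2\neq\emptyset$. Fix $w\in S_2$ and write $N_G(d)\cap I=\{x_1,x_2,x_3\}$. Since $d,w\in S$ and $S$ is a clique, $dw\in E(G)$. The set $\{d,w,x_1,x_2,x_3\}$ therefore induces a star centred at $d$, plus whatever edges join $w$ to the $x_j$. If $w$ has no neighbour among the $x_j$, this subgraph is $K_{1,4}$; if $w$ has exactly one such neighbour, it is $K_{1,4}+e$. Both are forbidden. So the whole proof reduces to showing that $w$ has at most one neighbour in $\{x_1,x_2,x_3\}$.

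Each $x_j$ lies in $I$, so it lies on some path of $\mathcal P$. By Claim~\ref{cl4-1}, any $x_j$ on a path $P_i$ with $|V(P_i)|\ge 5$ has no neighbour in $S_2$. The only $x_j$ that can be adjacent to $w$ are therefore middle vertices of paths of length $2$, that is, paths $P_i=a_1^ib_1^ia_2^i$ with $x_j=b_1^i$.

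Now suppose $w$ is adjacent to $x_j=b_1^i$ on such a path. I will reroute to get a better $I$-cover. The natural move is to replace $P_i$ by $a_1^ib_1^iw$ or by $wb_1^ia_2^i$, which frees an endpoint, and to use $d$ together with the other $x_{j'}$ to lengthen, merge or split paths. One must check that the new cover still has all paths of length at most $6$, that $|\{u,v\}\cap Inn|$ does not increase, and that it has more paths, contradicting the choice of $\mathcal P$ in (2). The obstacle is that replacing an endpoint by $w$ alone leaves $h$ unchanged, so it does not contradict the maximality directly. If $d$ is not an endpoint of $P_i$, I would instead try to split the path through $d$, using $w$ as a new endpoint, exactly as in Claim~\ref{le8-cl1}. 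This yields $h+1$ paths and is the desired contradiction, provided $u$ does not become interior.

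I expect to handle the remaining configurations with Lemma~\ref{lem5}. If $w$ is adjacent to two of the $x_j$, say $x_1$ and $x_2$, both of them are middle vertices of length-$2$ paths. If moreover $d$ is adjacent to a further $I$-vertex on its own path, or $w$ has a third independent neighbour, we get a $K_{1,3}\cup_0K_{1,2}$ with two vertices in $S$, which Lemma~\ref{lem5} forbids. The main difficulty will be the careful case analysis on where $d$ sits (endpoint or interior, and on which path) so that each case ends either in a forbidden induced subgraph or in an $I$-cover contradicting (1) or (2).
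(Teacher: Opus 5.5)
Your reduction is correct: it suffices to show that $w\in S_2$ is adjacent to at most one of $x_1,x_2,x_3$. Claim~\ref{cl4-1} also correctly confines any such neighbour to the middle vertex of a path of length $2$.

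\textbf{The gap.} The case that actually has to be excluded is $w$ adjacent to two of the $x_j$, both middle vertices of length-$2$ paths. You leave this case open, and the tools you sketch do not close it.
\begin{itemize}
\item Swapping an endpoint of $P_i$ for $w$ leaves $h$ unchanged, as you note yourself.
\item Splitting a path ``through $d$'' as in Claim~\ref{le8-cl1} needs $w$ to be adjacent to an $I$-vertex of a path with at least $5$ vertices. Claim~\ref{cl4-1} has already ruled that out. So rerouting gives no contradiction to the maximality of $h$.
\item Your Lemma~\ref{lem5} remarks do not apply either. $d$ has no ``further'' $I$-neighbour, since it has exactly three. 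If $w$ has $I$-neighbours $x_1,x_2,y$, then $d$ has only $x_3$ outside $w$'s star, so no vertex-disjoint $K_{1,3}\cup_0K_{1,2}$ appears.
\end{itemize}
What you never use is the long path $P_1$.

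\textbf{The missing idea.} Since $u=a^1_{i_0}\in Inn(P_1)$, we have $|V(P_1)|\ge 5$. By Claim~\ref{cl4-1}, no $I$-vertex of $P_1$ has a neighbour in $S_2$.
\begin{enumerate}
\item If $d$ had no $I$-neighbour on $P_1$, then $d\ne u$. The star of $d$ together with $u,b^1_{i_0-1},b^1_{i_0}$ would form the configuration forbidden by Lemma~\ref{lem5}. So $d$ has an $I$-neighbour on $P_1$.
\item Such neighbours are not adjacent to $w$. So by your reduction $d$ has exactly one, say $b^1_j$, and its other two $I$-neighbours $x,y$ lie off $P_1$.
\item Let $a$ be an interior vertex of $P_1$ adjacent to $b^1_j$ along $P_1$: take $a=a^1_2$ if $j=1$ and $a=a^1_j$ otherwise. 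Then $a\ne d$, because $a$ has two $I$-neighbours on $P_1$.
\item $a$ is adjacent to neither $x$ nor $y$. If it were adjacent to both, $a$ would have four $I$-neighbours, giving a $K_{1,4}$. If it were adjacent to exactly one, $a$ would have three $I$-neighbours, at least two of them on $P_1$ and hence non-adjacent to $w$. Then $a$, these three vertices and $w$ induce $K_{1,4}$ or $K_{1,4}+e$.
\item Hence $G[\{d,b^1_j,a,x,y\}]\cong K_{1,4}+e$, a contradiction.
\end{enumerate}
This is the paper's argument; it needs no rerouting at all.
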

\clproof
Suppose that $|N_{G}(d)\cap I|=3$ for some $d\in S_{1}$
and $S_{2}\ne \emptyset$.
Let $z\in S_{2}$.

Since $u\in Inn(P_1)$,
$|V(P_{1})|\geq5$.
By Lemma \ref{lem5},
$1\leq |N_{G}(d)\cap I \cap V(P_{1})|\leq3$.
If $|N_{G}(d)\cap I \cap V(P_{1})|\ge 2$, by Claim \ref{cl4-1}, $G[\{z,d\}\cup (N_{G}(d)\cap I)]\cong K_{1,4}$ or $K_{1,4}+e$,
a contradiction.
Thus, $|N_{G}(d)\cap I \cap V(P_{1})|=1$.

We assume that
$b_{j}^{1}\in N_{G}(d)\cap I \cap V(P_{1})$ and $\{x,y\}=(N_{G}(d)\cap I) \setminus \{b_{j}^{1}\}$. We choose $a_{j_{0}}^{1}=a_{2}^{1}$ if $j=1$ and  $a_{j_{0}}^{1}=a_{j}^{1}$ if $1<j\leq k_{1}$. Note that $|N_{G}(a_{j_{0}}^{1})\cap I\cap V(P_{1}
)|\geq 2$. Then $x,y\notin N_{G}(a_{j_{0}}^{1})$; otherwise,  $|N_{G}(a_{j_{0}}^{1})\cap I |\geq 3$,  contradicting with the fact that $G$ is $K_{1,4}$-free or $|N_{G}(d)\cap I \cap V(P_{1})|=1$ for any $d\in S_{1}$ with $|N_{G}(d)\cap I|=3$. However, $G[\{d,b_{j}^{1},a_{j_{0}}^{1},x,y\}]\cong K_{1,4}+e$, a contradiction. Thus, Claim \ref{cl4-2} holds.
\clproofend

\begin{cl} \label {cl16}
	For any
$l\in \{i_{0}-1,i_{0}\}$,  $N_{G}(b_{l}^{1})\cap (End (\mathcal P)\backslash V(P_{1}))=\emptyset.$

\end{cl}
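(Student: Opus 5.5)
We argue by contradiction, as in the proof of Claim~\ref{le51-cl1} of Lemma~\ref{le5-1}. Fix $l\in\{i_0-1,i_0\}$, so $b_l^1$ is one of the two $I$-neighbours of $u=a_{i_0}^1$ on $P_1$. Suppose $b_l^1w\in E(G)$ for some $w\in End(\mathcal P)\setminus V(P_1)$, say $w$ is an endpoint of $P_r$ with $r\ne 1$. The goal is to reroute the paths so that $u$ leaves $Inn(\mathcal P)$ without $v$ entering it. The resulting $I$-cover either contradicts the minimality in condition (1), or lets Lemma~\ref{lem9} (or Lemma~\ref{le5-1}(ii)) produce a Hamiltonian $(u,v)$-path, contrary to our assumption.

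Split $P_1$ at $u$. Write $P_1=Q_1\,u\,Q_2$, where $Q_1=a_1^1b_1^1\ldots b_{i_0-1}^1$ and $Q_2=b_{i_0}^1\ldots a_{k_1+1}^1$. By the symmetry between $l=i_0-1$ and $l=i_0$, assume $l=i_0$, so $b_l^1$ is the first vertex of $Q_2$. Delete the edge $ub_{i_0}^1$ and add the edge $b_{i_0}^1w$. Then the segment $a_1^1b_1^1\ldots b_{i_0-1}^1u$ is an $(S,I)$-alternating path with $u$ as an endpoint. The reversed segment $a_{k_1+1}^1\ldots b_{i_0}^1$ followed by $w$ and then $P_r$ is also an alternating path, since $w$ is an endpoint of $P_r$. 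Call the new collection $\mathcal P'$. It is still an $I$-cover, with the same number of paths and $u\in End(\mathcal P')$. The only issue is the length bound: the merged path may exceed length $6$. However, Lemma~\ref{lem9} does not need the length bound, and condition (1) only compares $I$-covers whose paths have length at most $6$. So we first pass through Lemma~\ref{lem9} directly.

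Case analysis on $v$:
\begin{itemize}
\item If $v\notin Inn(\mathcal P')$ and $u,v$ are not the two endpoints of a single path of $\mathcal P'$, Lemma~\ref{lem9} gives a Hamiltonian $(u,v)$-path, a contradiction.
\item If $u,v$ are the endpoints of a single path of $\mathcal P'$, Lemma~\ref{le5-1}(ii) gives the contradiction. This lemma imposes no length restriction on the cover, so it applies to $\mathcal P'$.
\item The remaining case is $v\in Inn(\mathcal P')$, i.e. $v\in Inn(\mathcal P)$, since the interiors only lost $u$ and gained $w$; note $v\ne w$, otherwise $v$ would be interior and we handle it similarly.
\end{itemize}

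In the remaining case $|\{u,v\}\cap Inn(\mathcal P')|=1<2=|\{u,v\}\cap Inn(\mathcal P)|$. If the paths of $\mathcal P'$ have length at most $6$, this contradicts (1). Otherwise we must shorten the merged path. We cut it at a suitable interior $S$-vertex, or use Lemma~\ref{lem8}-type rerouting, keeping $u$ an endpoint and $v$'s status unchanged. Lemma~\ref{lem6} and the original length bound on $P_1$ and $P_r$ should limit how this can fail.

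The main obstacle is exactly this last step: the subcase $v\in Inn(\mathcal P)$ when the merged path is too long. We cannot simply cut the merged path at its interior vertex $w$, because $w$ would then be an endpoint of two paths and the cover would not be vertex-disjoint. A cleaner alternative is to avoid merging altogether. We would instead detach $P_r$ at $w$, using $w$ only as a new endpoint of the $Q_2$-part, and argue that the residual part of $P_r$ is still an alternating path or contains no $I$-vertex. If this fails, we fall back to a $K_{1,4}/K_{1,4}+e$ contradiction on the set $\{w,b_l^1\}$ together with the $I$-neighbours of $w$ and a vertex of $S_2$. Here Claim~\ref{cl4-2} controls the case $S_2\neq\emptyset$.
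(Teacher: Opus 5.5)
\textbf{Verdict: the proposal has a genuine gap.} The unresolved case is the one that matters for the rest of the proof.

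\textbf{What matches the paper and is correct.} Your rerouting (delete $a_{i_0}^1b_l^1$, add $b_l^1w$) is exactly the paper's construction. Your first two cases are sound. If $v\notin Inn(\mathcal P)$ and $v\neq w$, then $v\notin Inn(\mathcal P')$, and Lemma~\ref{lem9} or Lemma~\ref{le5-1}(ii) gives a Hamiltonian $(u,v)$-path with no length restriction needed. The paper stops right after the construction. It asserts that $|\{u,v\}\cap Inn(\mathcal P')|<|\{u,v\}\cap Inn(\mathcal P)|$ contradicts the choice of $\mathcal P$, without checking that $\mathcal P'$ is admissible for (1). You are right that (1) is a minimum only over $I$-covers whose paths all have length at most $6$, and the merged path $a_{k_1+1}^1\ldots b_l^1w\ldots$ can be longer.

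\textbf{The gap.} You never close the case $v\in Inn(\mathcal P)$ with a long merged path; you leave it as ``the main obstacle''. This is not a marginal case: Claim~\ref{cl9} later shows $v\in Inn(\mathcal P)$, so it is exactly the situation the rest of the theorem's proof works in. As written, the proposal does not prove the claim.

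\textbf{Why the sketched remedies fail.}
\begin{itemize}
\item Cutting the merged path at an interior $S$-vertex $x$ leaves the $I$-vertex on the other side of $x$ without an $S$-endpoint. You would need a fresh $S$-neighbour of it, from $S_2$ or an endpoint of another path, which changes the configuration again.
\item Detaching $P_r$ at $w$ leaves $P_r-w$ beginning at the $I$-vertex $b_1^r$, so it is never an alternating path. The fallback ``contains no $I$-vertex'' never occurs, because every alternating path has an interior $I$-vertex.
\item Lemma~\ref{lem6}, applied to the edges you actually have, does not forbid a long merged path. Take $k_1=k_r=3$ and $i_0=l=2$. The merged path $a_4^1b_3^1a_3^1b_2^1wb_1^ra_2^rb_2^ra_3^rb_3^ra_4^r$ has length $10$. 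Yet these edges contain no $3K_{1,2}$ on three $S$-vertices: covering $b_1^1$ forces the centre $a_2^1$, covering $b_3^1$ forces $a_3^1$, and both centres would need $b_2^1$.
\end{itemize}

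\textbf{The subcase $w=v$.} This is not ``handled similarly''. There $\{u,v\}\cap Inn(\mathcal P)=\{u\}$ and $\{u,v\}\cap Inn(\mathcal P')=\{v\}$, so nothing decreases, and neither (1) nor Lemma~\ref{lem9} applies. The paper's one-line proof has the same blind spot. It is harmless only because the claim is used later just with $z\neq v$: in Claim~\ref{cl5}, $z\neq v$ by definition, and in Claim~\ref{cl9}, $v\in V(P_1)$. So the statement you should actually prove is the $w\neq v$ version.

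\textbf{What a complete proof needs.} The length bound requires a genuinely new argument. One option is to reorganise the extremal choice so that (1) ranges over all $I$-covers, and then re-derive the length bound used in Claim~\ref{cl6}. Another is to show directly that in the case $v\in Inn(\mathcal P)$ the merged path can be shortened while keeping $u$ an endpoint and $v$ interior.
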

\clproof
Suppose that $z\in N_{G}(b_{l}^{1})\cap (End (\mathcal P)\backslash V(P_{1}))$. Then, $\mathcal P'=\mathcal P-\{a_{i_{0}}^{1}b_{l}^{1}\}+\{zb_{l}^{1}\}$ is a new $I$-cover with
$|\{u,v\}\cap Inn(\mathcal P')|< |\{u,v\}\cap Inn(\mathcal P)|$, contrary to the choice of $\mathcal P$. Thus, Claim \ref{cl16} holds. \clproofend

\begin{cl} \label {cl5}

 If $v\neq a_{1}^{1}$, then $N_{G}(b_{i_{0}}^{1})\subseteq \{v,a_{i_{0}}^{1}, a_{i_{0}+1}^{1},\ldots,a_{k_{1}+1}^{1}\}$;
 If $v\neq a_{k_{1}+1}^{1}$, then $N_{G}(b_{i_{0}-1}^{1})\subseteq \{v,a_{1}^{1}, a_{2}^{1},\ldots,a_{i_{0}}^{1}\}$.
\end{cl}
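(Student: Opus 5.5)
By the symmetry of reversing $P_1$, I will prove only the first assertion; the second follows by the same argument with the roles of $a_1^1$ and $a_{k_1+1}^1$ exchanged. So assume $v\neq a_1^1$ and let $x\in N_G(b_{i_0}^1)$. Since $b^1_{i_0}\in I$, we have $x\in S=S_1\cup S_2$. I need to show that $x\in\{v\}\cup\{a^1_{i_0},\ldots,a^1_{k_1+1}\}$. The plan is to exclude every other possibility for $x$. In each case I either rebuild the $I$-cover, contradicting the choice conditions (1)--(2) of $\mathcal P$ or producing a Hamiltonian $(u,v)$-path via Lemma~\ref{lem9}, or I find an induced $K_{1,4}$ or $K_{1,4}+e$.

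\textbf{Case $x\in S_2$.} Take $\mathcal P'=\mathcal P-\{a^1_{i_0}b^1_{i_0}\}+\{b^1_{i_0}x\}$. This splits $P_1$ into $a^1_1\cdots a^1_{i_0-1}b^1_{i_0-1}a^1_{i_0}$ and $x b^1_{i_0}a^1_{i_0+1}\cdots a^1_{k_1+1}$. Now $u=a^1_{i_0}$ is an endpoint, so $|\{u,v\}\cap Inn(\mathcal P')|$ decreases (or stays the same while $h$ increases). This contradicts the choice of $\mathcal P$. Alternatively, it is already excluded by Claim~\ref{cl4-1}, since $|V(P_1)|\ge5$.

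\textbf{Case $x\in End(\mathcal P)\setminus V(P_1)$.} This is excluded by Claim~\ref{cl16}.

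\textbf{Case $x\in Inn(\mathcal P)\setminus V(P_1)$.} Say $x=a^j_m$ with $j\ne1$, an interior vertex of $P_j$. Rerouting gives $\mathcal P-\{a^1_{i_0}b^1_{i_0},\,a^j_m b^j_m\}+\{b^1_{i_0}a^j_m\}$. This makes $u$ an endpoint, but it leaves $b^j_m$ uncovered unless $b^j_m$ has a suitable other neighbour. So I will instead argue by forbidden subgraphs. Consider $a^j_m$ with its three $I$-neighbours $b^1_{i_0}$, $b^j_{m-1}$, $b^j_m$, together with the endpoint $a^1_1$ (or $a^1_{k_1+1}$). If $a^1_1$ is adjacent to $b^j_{m-1}$ or $b^j_m$, then swapping edges as in Claim~\ref{le51-cl5} produces a better cover: $u$ leaves $Inn$, contradicting (1). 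Otherwise, $G[\{a^j_m,a^1_1,b^1_{i_0},b^j_{m-1},b^j_m\}]$ is $K_{1,4}$ or $K_{1,4}+e$, depending on whether $a^1_1b^1_{i_0}\in E(G)$.

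\textbf{Case $x=a^1_l$ with $l<i_0$ and $x\ne v$.} Here I use the classical crossing move. Replace $P_1$ by
\[
a^1_1b^1_1\cdots a^1_l\, b^1_{i_0}\, a^1_{i_0+1}\cdots a^1_{k_1+1}
\]
together with the leftover segment $a^1_{l+1}b^1_{l+1}\cdots b^1_{i_0-1}a^1_{i_0}$. The leftover is an alternating path precisely when $l\le i_0-1$, and then it ends at $u$, so $u\notin Inn$. If $l=i_0-1$, the leftover is $a^1_{i_0}$ alone, which is not a path. In that case I instead combine the vertices differently, using that $a^1_{i_0-1}$ is then adjacent to both $b^1_{i_0-1}$ and $b^1_{i_0}$, and I use $K_{1,4}$-freeness around $a^1_{i_0-1}$ and $u$. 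Apart from this subcase, the new collection is an $I$-cover with fewer interior members of $\{u,v\}$, unless $v$ becomes interior. This is why $v$ itself is allowed in the statement. Also, $v=a^1_l$ with $l<i_0$ is permitted as $x$.

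\textbf{Main obstacle.} I expect the main difficulty to lie in the degenerate subcases: $l=i_0-1$, and the possibility that $v$ lies in $Inn$ of another path, so that the swaps shift $v$ into or out of $Inn$. These must be handled using the lexicographic choice (1)--(2) together with Lemmas~\ref{lem5} and~\ref{lem6}, and the path-length bound $\le6$ from Lemma~\ref{lem8}, which leaves only a few configurations to check by hand.
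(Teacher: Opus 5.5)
You handle $x\in S_2$, $x\in End(\mathcal P)\setminus V(P_1)$ and $x\in Inn(\mathcal P)\setminus V(P_1)$ correctly, and in agreement with the paper. In the third case the paper first excludes $a_1^1b_{i_0}^1\in E(G)$ by the choice of $\mathcal P$ and then forces both $a_1^1b_{m-1}^j$ and $a_1^1b_m^j$; you treat both possibilities for $a_1^1b_{i_0}^1$ at once, which is equally valid.

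The gap is the case $x=a_l^1$ with $l<i_0$, which is where the substance of the claim lies. Your paths $a_1^1b_1^1\cdots a_l^1b_{i_0}^1a_{i_0+1}^1\cdots a_{k_1+1}^1$ and $a_{l+1}^1b_{l+1}^1\cdots b_{i_0-1}^1a_{i_0}^1$ never use $b_l^1$. So the result is not an $I$-cover for any $l$, not only for $l=i_0-1$. No rearrangement inside $V(P_1)$ can fix this. An alternating path with $m$ vertices of $I$ has $m+1$ vertices of $S$, so two alternating paths covering the $k_1$ vertices of $I\cap V(P_1)$ need $k_1+2$ vertices of $S$, while $P_1$ has only $k_1+1$. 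Your fallback for $l=i_0-1$ also yields nothing. Since $u$ is adjacent to both $b_{i_0-1}^1$ and $b_{i_0}^1$, no induced $K_{1,4}$ or $K_{1,4}+e$ arises around $a_{i_0-1}^1$ and $u$.

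The missing idea is to borrow an $S$-vertex from outside $P_1$.
\begin{itemize}
\item For $l=1$ no extra vertex is needed. The single path $a_{i_0}^1b_{i_0-1}^1\cdots b_1^1a_1^1b_{i_0}^1a_{i_0+1}^1\cdots a_{k_1+1}^1$ makes $u$ an endpoint, and only $a_1^1\neq v$ becomes interior.
\item For $2\le l\le i_0-1$, the vertex $a_l^1$ has the three $I$-neighbours $b_{l-1}^1,b_l^1,b_{i_0}^1$. So Claim~\ref{cl4-2} gives $S_2=\emptyset$, and Lemma~\ref{le5-1}(i) gives $h\ge 2$.
\item Let $w$ be an endpoint of $P_2$ with $w\neq v$, and orient $P_2$ so that $w=a_1^2$. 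Claim~\ref{cl16}, applied with $w\ne v$, gives $wb_{i_0}^1\notin E(G)$.
\item Then $G[\{a_l^1,b_{l-1}^1,b_l^1,b_{i_0}^1,w\}]$ is $K_{1,4}$ or $K_{1,4}+e$ unless $wb_{l-1}^1,wb_l^1\in E(G)$, so both edges are present.
\item Now $\mathcal P-\{a_l^1b_{l-1}^1,a_{i_0}^1b_{i_0}^1\}+\{wb_{l-1}^1,a_l^1b_{i_0}^1\}$ replaces $P_1,P_2$ by $a_{i_0}^1b_{i_0-1}^1\cdots b_l^1a_l^1b_{i_0}^1a_{i_0+1}^1\cdots a_{k_1+1}^1$ and $a_1^1b_1^1\cdots a_{l-1}^1b_{l-1}^1a_1^2b_1^2\cdots a_{k_2+1}^2$.
\item In this $I$-cover $u$ is an endpoint, and the only vertex that lost endpoint status is $w\neq v$. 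This contradicts condition (1).
\end{itemize}
This is the paper's argument. In it the subcase $l=i_0-1$ is not degenerate, since the segment from $u$ back to $a_l^1$ is simply $a_{i_0}^1b_{i_0-1}^1a_{i_0-1}^1$.
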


\begin{figure}[htbp]
  \centering
   %Requires \usepackage{graphicx}
  \includegraphics[width=0.9\linewidth]{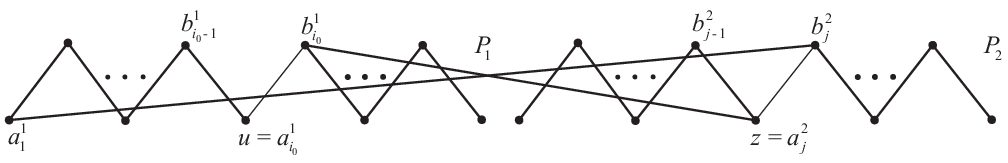}
  \caption{The case when $z=a_{j}^{2}$ in the proof of Claim \ref{cl5}.}
\end{figure}

\clproof  By symmetry, it is sufficient to prove  $N_{G}(b_{i_{0}}^{1})\subseteq \{v,a_{i_{0}}^{1}, a_{i_{0}+1}^{1},\ldots,a_{k_{1}+1}^{1}\}$ if $v\neq a_{1}^{1}$.

Suppose that $v\neq a_{1}^{1}$
and
$z\in N_{G}(b_{i_{0}}^{1})\setminus \{v,a_{i_{0}}^{1}, a_{i_{0}+1}^{1},\ldots,a_{k_{1}+1}^{1}\}$. By the choice of $\mathcal P$, $a_{1}^{1}b_{i_{0}}^{1}\notin E(G)$. By Claim \ref{cl4-1}, $z\notin S_{2}$. Then, by Claim \ref{cl16},   $z\notin End (\mathcal P)\backslash V(P_{1})$. Thus, $z\in Inn(\mathcal P)\setminus \{a_{i_{0}}^{1},a_{i_{0}+1}^{1}\}$. By Claim \ref{cl4-2}, $S_{2}=\emptyset$. Then, by Lemma \ref{le5-1}(i), $h\geq2$. Without loss of generality, we assume that $v\neq a_{1}^{2}$. Then $b_{i_{0}}^{1}a_{1}^{2}\notin E(G)$.

We further suppose that $z=a_{j}^{2}\in V(P_{2})\cap Inn(\mathcal P)$. Consider $G[\{a_{j}^{2},a_{1}^{1},b_{i_{0}}^{1}, b_{j-1}^{2},b_{j}^{2}\}]$. Since $G$ is $\{K_{1,4},K_{1,4}+e\}$-free, $a_{1}^{1}b_{j-1}^{2},a_{1}^{1}b_{j}^{2}\in E(G)$. Then $\mathcal P'=\mathcal P-\{a_{i_{0}}^{1}b_{i_{0}}^{1},a_{j}^{2}b_{j}^{2}\}+\{a_{1}^{1}b_{j}^{2},a_{j}^{2}b_{i_{0}}^{1}\}$ is a new $I$-cover with
$|\{u,v\}\cap Inn(\mathcal P')|< |\{u,v\}\cap Inn(\mathcal P)|$ (depicted in Fig. 6), contrary to the choice of $\mathcal P$. Then $N_{G}(b_{i_{0}}^{1})\subseteq V(P_{1})$ and $z=a_{j_{0}}^{1}$ for some $j_{0}\in \{2,3,\ldots,i_{0}-1\}$.

\begin{figure}[htbp]
  \centering
   %Requires \usepackage{graphicx}
  \includegraphics[width=0.9\linewidth]{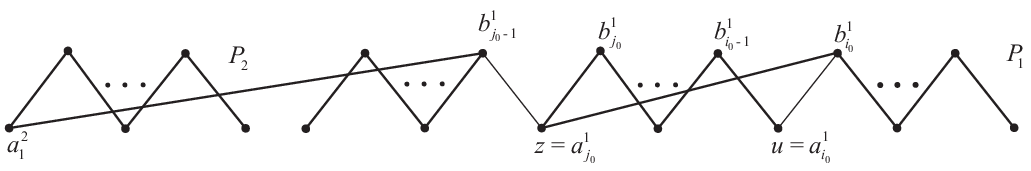}
  \caption{The case when $z=a_{j_{0}}^{1}$ in the proof of Claim \ref{cl5}.}
\end{figure}

Consider $G[\{a_{j_{0}}^{1},b_{j_{0}-1}^{1},b_{j_{0}}^{1},b_{i_{0}}^{1},a_{1}^{2}\}]$. Since $G$ is $\{K_{1,4},K_{1,4}+e\}$-free, $a_{1}^{2}b_{j_{0}-1}^{1},a_{1}^{2}b_{j_{0}}^{1}\in E(G)$. Then $\mathcal P'=\mathcal P-\{a_{j_{0}}^{1}b_{j_{0}-1}^{1},a_{i_{0}}^{1}b_{i_{0}}^{1}\}+\{a_{1}^{2}b_{j_{0}-1}^{1},a_{j_{0}}^{1}b_{i_{0}}^{1}\}$ is a new $I$-cover with
$|\{u,v\}\cap Inn(\mathcal P')|< |\{u,v\}\cap Inn(\mathcal P)|$ (depicted in Fig. 7), contrary to the choice of $\mathcal P$. Thus, Claim \ref{cl5}  holds. \clproofend

\begin{cl} \label {cl6}
$5\leq |V(P_{i})|\leq 7$.

\end{cl}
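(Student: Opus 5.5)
The upper bound is immediate: by Lemma~\ref{lem8} we chose $\mathcal P$ with every path of length at most $6$, so $|V(P_i)|\le 7$ for all $i\in[h]$. Similarly, for $i=1$ the lower bound is automatic, since $u=a^1_{i_0}$ with $1<i_0<k_1+1$ is an interior $S$-vertex of $P_1$. Then $k_1\ge 2$ and hence $|V(P_1)|\ge 5$. So the real content is to rule out a path of length $2$ among $P_2,\dots,P_h$. Since every $(S,I)$-alternating path has an odd number of vertices, this gives $|V(P_i)|\ge 5$.

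Suppose, for a contradiction, that some $P_i$ with $i\ge 2$ is $a\,b\,a'$ with $a,a'\in S$ and $b\in I$. I would look at the star centred at $u=a^1_{i_0}$ formed by its two $I$-neighbours $b^1_{i_0-1},b^1_{i_0}$ on $P_1$ together with the two endpoints $a,a'$ of $P_i$. Since $S$ is a clique, $u$ is adjacent to both $a$ and $a'$, and $aa'\in E(G)$. The vertices $b^1_{i_0-1},b^1_{i_0}$ are in $I$, so they are nonadjacent to each other. Moreover, $a,a'\in End(\mathcal P)\setminus V(P_1)$, so Claim~\ref{cl16} gives that neither $b^1_{i_0-1}$ nor $b^1_{i_0}$ is adjacent to $a$ or $a'$.

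Therefore $G[\{u,b^1_{i_0-1},b^1_{i_0},a,a'\}]$ is a star $K_{1,4}$ centred at $u$ plus the single edge $aa'$, i.e. it is isomorphic to $K_{1,4}+e$. This contradicts the hypothesis that $G$ is $\{K_{1,4},K_{1,4}+e\}$-free. Hence no $P_i$ has length $2$, and combined with the upper bound, $5\le |V(P_i)|\le 7$ for every $i\in[h]$.

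The step needing the most care is checking that Claim~\ref{cl16} really applies to both endpoints of $P_i$. It only excludes end-vertices outside $P_1$, which $a$ and $a'$ are, because the paths are vertex-disjoint. It also holds regardless of whether $v$ equals $a$ or $a'$, since Claim~\ref{cl16} makes no exception for $v$. If that ever failed, the fallback would be to use the third neighbour $c$ of $b$ (which exists since $d_G(b)\ge 3$) to reroute $P_i$, and to argue via condition (1) on $\mathcal P$.
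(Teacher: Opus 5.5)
For the claim as the paper proves and uses it, your argument is exactly the paper's. You get $|V(P_1)|\le 7$ from the choice of $\mathcal P$ via Lemma~\ref{lem8}, and $|V(P_1)|\ge 5$ because $u=a^1_{i_0}$ is an interior $S$-vertex of $P_1$. The paper proves nothing more, and the index $i$ in the statement is effectively $1$. Every later citation of Claim~\ref{cl6} concerns $P_1$, and the proof of Claim~\ref{cl9} even passes through ``$t_i=2$ for $2\le i\le h$'' before reaching its contradiction.

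Your extension to $i\ge 2$ is therefore extra. It is the same $K_{1,4}+e$ configuration the paper uses at the end of the proof of Claim~\ref{cl9}. The step you flagged is, however, genuinely weak. If $v\in\{a,a'\}$, you are using Claim~\ref{cl16} with $z=v$. In that case its exchange $\mathcal P-\{a^1_{i_0}b^1_l\}+\{vb^1_l\}$ turns $u$ into an endpoint but turns $v$ into an interior vertex. So $|\{u,v\}\cap Inn(\mathcal P)|$ does not decrease, and no contradiction with the choice of $\mathcal P$ arises. Consistently, Claim~\ref{cl5} lists $v$ as a permitted neighbour of $b^1_{i_0}$, and its proof arranges $v\neq a^2_1$ before invoking Claim~\ref{cl16}. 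The literal wording of Claim~\ref{cl16} is therefore not a safe basis here.

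The case is easy to close without it. The condition $v\in\{a,a'\}$ forces $v\notin V(P_1)$, so both halves of Claim~\ref{cl5} apply (that claim precedes and does not use Claim~\ref{cl6}). This gives $N_G(b^1_{i_0-1})\cup N_G(b^1_{i_0})\subseteq V(P_1)\cup\{v\}$. Since $|V(P_1)|\le 7$ and $|V(G)|\ge 13$, at least three vertices of $S$ lie outside $V(P_1)$. So you can pick $z_1,z_2\in S\setminus (V(P_1)\cup\{v\})$ and obtain $G[\{u,b^1_{i_0-1},b^1_{i_0},z_1,z_2\}]\cong K_{1,4}+e$. When $v\notin\{a,a'\}$, your appeal to Claim~\ref{cl16} is fine, and the vague fallback through the third neighbour of $b$ is unnecessary.
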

\clproof
By the assumption on $\setp$,
$|V(P_1)|\le 7$.
%By Lemma \ref{lem2},
As $u\in Inn(P_1)$,
$|V(P_1)|\ge 5$.
 Claim \ref{cl6} holds.
 \clproofend

\begin{cl} \label {cl7}

$v\in S_{1}$.
\end{cl}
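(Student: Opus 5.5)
Suppose, for a contradiction, that $v\in I$. The plan is to use $v$ to build a Hamiltonian $(u,v)$-path directly, or else find a forbidden induced subgraph. Since $v\in I$ and $I\subseteq\bigcup_i P_i^{o}$, $v=b_m^{i}$ for some path $P_i$. First I will show that $v$ can be assumed to lie on $P_1$.

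\textbf{Case $v=b_m^{i}$ with $i\ne 1$.} Here $u=a_{i_0}^1$ is an inner vertex of $P_1$ and $v$ is an inner vertex of another path. I will cut $P_i$ at $v$: the two pieces $a_1^i\ldots b_{m-1}^i a_m^i$ and $a_{m+1}^i b_{m+1}^i\ldots a_{k_i+1}^i$ are alternating paths, or single $S$-vertices. Together with the remaining paths, they form an alternating family covering $I\setminus\{v\}$. From this I want a Hamiltonian path of the form
\[
u\ \cdots\ (\text{all of } G-v)\ \cdots\ a_m^i\, v \quad\text{or}\quad \cdots a_{m+1}^i\, v .
\]
This reduces to finding a Hamiltonian $(u,a_m^i)$-path in $G-v$. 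Since $u\in Inn$, that is the same difficulty as the original problem, so I would instead reroute one of $b_{i_0-1}^1,b_{i_0}^1$.
\begin{enumerate}[(a)]
\item By Claim~\ref{cl5}, the neighbours of $b_{i_0}^1$ lie on $P_1$ (beyond $u$) or equal $v$. Since $v\in I$ is not a neighbour, $N_G(b_{i_0}^1)\subseteq V(P_1)\cap S$, and symmetrically for $b_{i_0-1}^1$.
\item Combining this with $d_G\ge 3$ and $|V(P_1)|\le 7$ (Claim~\ref{cl6}), both $b_{i_0-1}^1$ and $b_{i_0}^1$ have at least three neighbours among the at most four $S$-vertices of $P_1$.
\item Then pick $w\in S\setminus V(P_1)$, which exists by Lemma~\ref{le5-1}(i) together with Claims~\ref{cl4-1}/\ref{cl4-2}. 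Together with a vertex $a_j^1$ adjacent to two or three of the $b^1$'s, it yields an induced $K_{1,4}$ or $K_{1,4}+e$ centred at some $a_j^1$. Otherwise Lemma~\ref{lem5} is violated, with $\{a_j^1,w\}$ or $\{a_j^1,a^i_m\}$ as the $S$-part.
\end{enumerate}

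\textbf{Case $v=b_m^1$.} Claim~\ref{cl5} applies with $v\ne a_1^1,a_{k_1+1}^1$. Hence $N(b_{i_0}^1)\subseteq\{a_{i_0}^1,\ldots,a_{k_1+1}^1\}$ and $N(b_{i_0-1}^1)\subseteq\{a_1^1,\ldots,a_{i_0}^1\}$. With $|V(P_1)|\le 7$ we have $k_1\le 3$ and $i_0\in\{2,3\}$, so these neighbourhood bounds force adjacencies: for example, when $k_1=2$, $b_1^1$ has only two possible neighbours, contradicting $\delta\ge 3$. So $k_1=3$, and then $b_{i_0}^1$ (say $i_0=2$) is adjacent to $a_2^1,a_3^1,a_4^1$. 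Since $v$ is one of $b_1^1,b_2^1,b_3^1$, I then write an explicit Hamiltonian $(u,v)$-path. It starts at $u$, traverses the other paths of $\mathcal P$ and $S_2$ through the clique $S$, uses the extra chords such as $b_2^1a_4^1$ to reach the remaining $b$'s of $P_1$, and ends at $v$. The subcases $v=b_1^1,b_2^1,b_3^1$ are checked separately. Where a subcase fails, the missing chord would give a $K_{1,4}+e$ with a vertex from $P_2$ or $S_2$, via Lemma~\ref{lem5}.

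The main obstacle is the first case. Keeping the argument from circling back into the same problem requires carefully exploiting Claim~\ref{cl5} to pin $b^1$-neighbourhoods inside $P_1$, and then producing the forbidden configuration. The order bound $|V(G)|\ge 13$ guarantees enough vertices outside $P_1$ for Lemma~\ref{lem5} or Lemma~\ref{lem6} to bite.
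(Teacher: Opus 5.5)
There is a genuine gap. The decisive step is a short count that you set up but never carry out, and one case is missing entirely. Since $S=S_1\cup S_2$, the assumption $v\notin S_1$ means $v\in I\cup S_2$, but you only treat $v\in I$.

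More importantly, in both of your cases you already have the two \emph{one-sided} bounds of Claim~\ref{cl5}: $N_G(b_{i_0}^1)\subseteq\{a_{i_0}^1,\ldots,a_{k_1+1}^1\}$ and $N_G(b_{i_0-1}^1)\subseteq\{a_1^1,\ldots,a_{i_0}^1\}$. Both apply because $v\notin\{a_1^1,a_{k_1+1}^1\}$. Moreover, $v$ is not itself a neighbour of these vertices: if $v\in I$ this is independence, and if $v\in S_2$ it is Claim~\ref{cl4-1}, since $|V(P_1)|\ge 5$. Applying $\delta(G)\ge 3$ to \emph{both} vertices gives $i_0\ge 3$ and $k_1-i_0+2\ge 3$. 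Hence $k_1\ge 4$ and $|V(P_1)|\ge 9$, contradicting Claim~\ref{cl6}. That is the paper's entire proof. It does not depend on which path contains $v$, so your split into $v$ on $P_i$ ($i\ne 1$) versus $v$ on $P_1$ is unnecessary.

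As written, neither of your cases reaches a contradiction. In the first case, step (b) weakens the bounds to ``at least three neighbours among the at most four $S$-vertices of $P_1$''. This discards exactly the before/after-$u$ information that finishes the argument. Step (c) then asserts an induced $K_{1,4}$ or $K_{1,4}+e$ ``centred at some $a_j^1$'' without naming its vertices or checking the non-adjacencies. It could be repaired. From $|V(G)|\ge 13$ and $|V(P_1)|\le 7$ you get $z_1,z_2\in S\setminus V(P_1)$, since every $I$-vertex off $P_1$ lies inside another path with endpoints in $S$. By Claim~\ref{cl5} these miss $b_{i_0-1}^1$ and $b_{i_0}^1$, so $\{u,b_{i_0-1}^1,b_{i_0}^1,z_1,z_2\}$ induces $K_{1,4}+e$. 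But that is not what you wrote.

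In the second case you apply the degree bound only to $b_{i_0}^1$. With $k_1=3$ and $i_0=2$, the vertex $b_1^1$ has at most the two neighbours $a_1^1,a_2^1$ (symmetrically $b_3^1$ when $i_0=3$). So the configuration is already impossible. The promised explicit Hamiltonian $(u,v)$-path with its subcase checks is never constructed, and it is not needed.
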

\clproof  %By a way of contradiction.
Suppose that $v\notin S_{1}$. Then $v\in I\cup S_{2}$.
By Claims \ref{cl4-1}, \ref{cl4-2}
and \ref{cl5},
$N_{G}(b_{i_{0}}^{1})\subseteq \{a_{i_{0}}^{1}, a_{i_{0}+1}^{1},\ldots,a_{k_{1}+1}^{1}\}$ and
$N_{G}(b_{i_{0}-1}^{1})\subseteq \{a_{1}^{1}, a_{2}^{1},\ldots,a_{i_{0}}^{1}\}$. Since $\delta(G)\geq 3$, $|V(P_{1})|\geq 9$,
contradicting
Claim \ref{cl6}.
 Thus, Claim \ref{cl7}  holds. \clproofend

\begin{cl} \label {cl9}

$v\in Inn(\mathcal P)$.
\end{cl}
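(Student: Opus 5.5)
We argue by contradiction. Suppose $v\notin Inn(\mathcal P)$. By Claim~\ref{cl7}, $v\in S_1$, so $v\in End(\mathcal P)$. Throughout, $\mathcal P$ is the $I$-cover fixed above (paths of length at most $6$, with the extremal choices (1) and (2)), and $u=a_{i_0}^1$ with $1<i_0<k_1+1$. The aim is to use Claims~\ref{cl4-1}--\ref{cl5} to squeeze the neighbourhoods of $b_{i_0-1}^1$ and $b_{i_0}^1$ so much that either $P_1$ must be long, contradicting Claim~\ref{cl6}, or $P_1$ can be rerouted. In the rerouting case we look for a new $I$-cover $\mathcal P'$ with $|\{u,v\}\cap Inn(\mathcal P')|=0$; then Lemma~\ref{lem9} or Lemma~\ref{le5-1}(ii) gives a Hamiltonian $(u,v)$-path. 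There are two cases.

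\emph{Case 1: $v$ is an endpoint of some $P_j$ with $j\ge 2$.} Then $v\ne a_1^1$ and $v\ne a_{k_1+1}^1$, so both parts of Claim~\ref{cl5} apply:
\[
N_G(b_{i_0}^1)\subseteq\{v,a_{i_0}^1,\dots,a_{k_1+1}^1\},\qquad
N_G(b_{i_0-1}^1)\subseteq\{v,a_1^1,\dots,a_{i_0}^1\}.
\]
Since $v\in End(\mathcal P)\setminus V(P_1)$, Claim~\ref{cl16} removes $v$ from both sets. Now $\delta(G)\ge 3$ forces $i_0\ge 3$ and $k_1+1-i_0\ge 2$. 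Hence $k_1\ge 4$ and $|V(P_1)|\ge 9$, which contradicts Claim~\ref{cl6}. This is exactly the computation used in Claim~\ref{cl7}.

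\emph{Case 2: $v$ is an endpoint of $P_1$.} By symmetry let $v=a_{k_1+1}^1$. Only the second part of Claim~\ref{cl5} applies: $N_G(b_{i_0-1}^1)\subseteq\{v,a_1^1,\dots,a_{i_0}^1\}$.

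\begin{itemize}
\item Suppose first $b_{i_0-1}^1v\in E(G)$. Reroute $P_1$ as
\[
a_1^1b_1^1\cdots a_{i_0-1}^1b_{i_0-1}^1\,v\,b_{k_1}^1a_{k_1}^1\cdots b_{i_0}^1a_{i_0}^1 .
\]
This has the same vertex set as $P_1$, so it gives a new $I$-cover in which $u$ is an endpoint. Now $v$ has become interior. So we do not stop here: we rerun the whole argument with the roles of $u$ and $v$ swapped, using the new cover. If that leads back to this same subcase, we combine the two reroutings with Lemma~\ref{le5-1}(ii), aiming to reach an $I$-cover in which $u$ and $v$ are the two ends of one path.

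\item Otherwise $N_G(b_{i_0-1}^1)\subseteq\{a_1^1,\dots,a_{i_0}^1\}$, so $i_0\ge 3$. Together with $|V(P_1)|\le 7$ this gives $i_0=3$, $k_1=3$, and
\[
P_1=a_1^1b_1^1a_2^1b_2^1\,u\,b_3^1\,v,\qquad N_G(b_2^1)=\{a_1^1,a_2^1,u\}.
\]
By Lemma~\ref{le5-1}(i) there is a vertex $w\in S_2\cup\{a_1^2\}$ outside $P_1$. Consider the stars at $a_1^1$ and at $a_2^1$ with leaves among $b_1^1,b_2^1,b_3^1$ and $w$. Each such configuration either yields an induced $K_{1,4}$ or $K_{1,4}+e$, using that $w$ misses the $b_i^1$ by Claims~\ref{cl4-1} and~\ref{cl16}, or forces an edge such as $a_1^1b_3^1$. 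A forced edge allows a swap in the style of Claims~\ref{cl5} and~\ref{le51-cl5} that makes $u$ an endpoint, giving a contradiction via Lemma~\ref{le5-1}(ii).
\end{itemize}

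I expect Case 2 to be the main obstacle: Claim~\ref{cl5} gives only one-sided information there, and the small configuration $k_1=3$ needs a careful check against Lemmas~\ref{lem5} and~\ref{lem6}. I would handle it first by applying Lemma~\ref{lem5} to $a_2^1$ (with neighbours $b_1^1,b_2^1$) and $w$, and only then fall back on the explicit rerouting.
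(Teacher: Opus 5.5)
Your Case~1 counting is fine, but Case~2 has a genuine gap: it begins by applying the wrong half of Claim~\ref{cl5}, and the argument built on that does not close.

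\textbf{The misapplication.} The second half of Claim~\ref{cl5}, about $b_{i_0-1}^1$, assumes $v\neq a_{k_1+1}^1$. That is exactly what fails once you set $v=a_{k_1+1}^1$. The half you do have is the first one, whose hypothesis $v\neq a_1^1$ holds. It gives $N_G(b_{i_0}^1)\subseteq\{a_{i_0}^1,\dots,a_{k_1+1}^1\}$. The asymmetry is real: an edge $b_{i_0}^1a_1^1$ would let you reverse the segment $a_1^1\cdots a_{i_0}^1$ so that $u,v$ become the two ends of one path, and Lemma~\ref{le5-1}(ii) applies. An edge $b_{i_0-1}^1v$ merely swaps the roles of $u$ and $v$, as your first bullet shows.

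\textbf{Consequences for your two bullets.} Your first bullet tries to exclude something nothing forbids. Its remedy, ``rerun with $u,v$ swapped and combine the reroutings,'' has no progress measure. The rerouted path has the same vertex set, so the new cover is just as extremal for conditions (1) and (2), and you are back in Case~2. Your second bullet studies a configuration ($i_0=3$) that the correct half kills at once: $N_G(b_3^1)\subseteq\{a_3^1,a_4^1\}$ contradicts $\delta(G)\ge 3$. The star argument there is also only gestured at. For instance, for $w=a_1^2$, Claim~\ref{cl16} says nothing about $b_1^1$.

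\textbf{The missing finish.} With the correct half, $\delta(G)\ge 3$ and $|V(P_1)|\le 7$ force $k_1=3$, $u=a_2^1$, $v=a_4^1$ and $N_G(b_2^1)=\{a_2^1,a_3^1,a_4^1\}$. The remaining work is to control $b_1^1$, which may be adjacent to $v$, so this must be done with forbidden subgraphs rather than rerouting:
\begin{itemize}
\item An interior vertex of another path is excluded from $N_G(b_1^1)$ by Lemma~\ref{lem5}, using $a_3^1$ with $b_2^1,b_3^1$ as the $K_{1,2}$.
\item Endpoints of other paths are excluded by Claim~\ref{cl16}, and $S_2$ by Claim~\ref{cl4-1}.
\item Hence $b_1^1$ is adjacent to $a_3^1$ or $a_4^1$, and that vertex has three $I$-neighbours $b_1^1,b_2^1,b_3^1$.
\item Claim~\ref{cl4-2} then gives $S_2=\emptyset$, and Lemma~\ref{lem5} forces every other path to have length $2$.
\item Lemma~\ref{le5-1}(i) gives $h\ge 2$, and $G[\{a_2^1,b_1^1,b_2^1,a_1^2,a_2^2\}]\cong K_{1,4}+e$, a contradiction.
\end{itemize}

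\textbf{A caution on Case~1.} Your deduction is valid from Claim~\ref{cl16} as stated. However, when $z=v$ the swap in that claim's proof turns $v$ into an interior vertex, so that instance is not actually established. The paper avoids it. Since $|V(G)|\ge 13$ and $|V(P_1)|\le 7$, one can pick $z_1,z_2\in S\setminus(V(P_1)\cup\{v\})$. By both halves of Claim~\ref{cl5}, which keep $v$ as a possible neighbour, $u$ together with $b_{i_0-1}^1,b_{i_0}^1,z_1,z_2$ induces $K_{1,4}+e$.
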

\clproof Suppose that $v\notin Inn(\mathcal P)$.
Then, $v\in End(\mathcal P)$ by Claim \ref{cl7}.
Without loss of generality, we assume that $v\neq a_{1}^{1}$. By Claim \ref{cl5}, $N_{G}(b_{i_{0}}^{1})\subseteq  \{v,a_{i_{0}}^{1}, a_{i_{0}+1}^{1},\ldots,a_{k_{1}+1}^{1}\}$. Note that $|V(P_{1})|\leq 7$ by
Claim \ref{cl6}.
Since $|V(G)|\geq 13$,
$|S\setminus (V(P_{1})\cup \{v\})|\geq 2$.

Then $v=a_{k_{1}+1}^{1}$. Otherwise, by Claim \ref{cl5}, there exist two distinct vertices $z_{1},z_{2}\in S\setminus (V(P_{1})\cup \{v\})$ such that $G[\{a_{i_{0}}^{1},b_{i_{0}-1}^{1},b_{i_{0}}^{1},z_{1},z_{2}\}]\cong K_{1,4}+e$, a contradiction.

Then $N_{G}(b_{i_{0}}^{1})= \{a_{i_{0}}^{1},a_{i_{0}+1}^{1},
a_{k_{1}+1}^{1}\}$.
Otherwise, we assume that there exists a vertex $a_{j}^{1}\in \{a_{i_{0}+2}^{1},\ldots,a_{k_{1}}^{1}\}$ such that $a_{j}^{1}b_{i_{0}}^{1}\in E(G)$,
implying that  $|V(P_{1})|\geq 9$, contracting Claim~\ref{cl6}.

Then, as $|V(P_{1})|\leq 7$, $k_{1}=3$, i.e., $|V(P_{1})|=7$. So, $u=a_{2}^{1}$.

Then, by Lemma \ref{lem5}, $N_{G}(b_{1}^{1})\cap (Inn(\mathcal P)\setminus V(P_{1}))=\emptyset$. By Claim \ref{cl16}, $N_{G}(b_{1}^{1})\cap (End(\mathcal P )\setminus V(P_{1}))=\emptyset$. Then, by Claim \ref{cl4-1}, $N_{G}(b_{1}^{1})\subseteq \{a_{1}^{1},a_{2}^{1},a_{3}^{1},a_{4}^{1}\}$.
Since $d_{G}(b_{1}^{1})\geq 3$, either $a_{3}^{1}b_{1}^{1}\in E(G)$ or $a_{4}^{1}b_{1}^{1}\in E(G)$.
By Claims \ref{cl4-2} and \ref{cl6}, and by Lemma \ref{lem5},
$S_{2}=\emptyset$ and $t_{i}=2$ for $2\leq i \leq h$. Then $G[\{a_{2}^{1},b_{1}^{1},b_{2}^{1}\}\cup (V(P_{2})\cap S)]\cong K_{1,4}+e$, a contradiction. Thus, Claim \ref{cl9}  holds. \clproofend

Suppose that $v\in V(P_{1})$. Then, by Claims \ref{cl5}, \ref{cl6} and \ref{cl9}, $|V(P_{1})|=7$, $u=a_{2}^{1}$, $v=a_{3}^{1}$.
Since $|V(G)|\geq 13$
and $|V(P_1)|= 7$,
 $|S\setminus V(P_{1})|\geq 2$.
Then, by Claim \ref{cl5}, there exist two distinct vertices $z_{1},z_{2}\in S\setminus V(P_{1})$ such that $G[\{a_{2}^{1},b_{1}^{1},b_{2}^{1},z_{1},z_{2}\}]\cong K_{1,4}+e$, a contradiction. Hence, $v\notin V(P_{1})$.

Then, by Claims \ref{cl5}, \ref{cl6} and \ref{cl9}, and by Lemma \ref{lem5}, $|V(P_{1})|\neq 7$, and so $|V(P_{1})|=5$. Then $u=a_{2}^{1}$. Since $d_{G}(b_{1}^{1}), d_{G}(b_{2}^{1})\geq 3$, by Claims \ref{cl5} and $v\notin V(P_{1})$, $b_{1}^{1}, b_{2}^{1}\in N_{G}(v)$. Without loss of generality, we assume that $v\in V(P_{2})$ and $v=a_{j}^{2}$, where $1< j< k_{2}+1$. Then $G[\{v,b_{1}^{1}, b_{2}^{1},b_{j-1}^{2}, b_{j}^{2}\}]\cong K_{1,4}$, a contradiction.

Thus, we complete the proof of Theorem \ref{Th1}.\proofend

\section{Further study}

Motivated by Theorem \ref{Th1}, we finally pose the following conjecture.

\begin{con} \label {con4}
Let $r\geq3$ be an integer. Every  $r$-connected $\{K_{1,r+1},K_{1,r+1}+e\}$-free  split graph of order at least $2r+7$ is Hamilton-connected.
\end{con}

\gap

\noindent{\bf Declaration of competing interest}

There is no conflict of interest.
No experiments and no experimental data included in this article.\\

\noindent{\bf Data availability}

No data was used for the research described in the article.

\section*{Acknowledgements}

The first author would like to thank the hospitality of the National Institute of Education, Nanyang Technological University in Singapore, where the work was done.  This work was supported by  the National Natural Science Foundation of China (No. 12101126, 12371340) and  Natural Science Foundation of Fujian Province (No. 2023J01539). This work was also partly supported by  China Scholarship Council (No. 202409100010).

%\end{CJK}

\end{document}